\theoremstyle{definition}
\newtheorem{Teorema}{Theorem}[section]
\newtheorem{Corollary}[Teorema]{Corollary} 
\newtheorem{Lemma}[Teorema]{Lemma} 
\newtheorem{Ejemplo}[Teorema]{Example} 
\newtheorem{Definicion}[Teorema]{Definition} 
\newtheorem{Proposition}[Teorema]{Proposition} 
\newtheorem{theoremx}{Theorem}
\newtheorem{Remark}[Teorema]{Remark}
\newtheoremstyle{italichead}{3pt}{3pt}{\normalfont}{}{\itshape}{.}{ }{}     
\theoremstyle{italichead}
\newcounter{step}
\newcounter{case}
\newcounter{subcase}[case]
\newcounter{subsubcase}[subcase]
\renewcommand{\thestep}{\arabic{step}}
\renewcommand{\thecase}{\arabic{case}}
\renewcommand{\thesubcase}{\thecase.\arabic{subcase}}
\renewcommand{\thesubsubcase}{\thesubcase.\arabic{subsubcase}}
\newcommand{\step}[1][]{
  \refstepcounter{step}
  \textbf{Step \thestep#1.}
}
\newcommand{\case}[1][]{
  \refstepcounter{case}
  \textbf{Case \thecase#1.}
}
\newcommand{\subcase}[1][]{
  \refstepcounter{subcase}
  \textbf{Case \thesubcase#1.}
}
\newcommand{\subsubcase}[1][]{
  \refstepcounter{subsubcase}
  \textbf{Case \thesubsubcase#1.}
}
\newcommand{\Canf}[2]{\operatorname{Can}(#1,#2,\prec)}
\newcommand{\Co}{\mathcal{C}}
\newcommand{\Fn}[1]{\mathbb{F}_{#1}}
\newcommand{\F}[2]{\mathbb{F}_{#1}^{#2}}
\newcommand{\D}[1]{\Delta{\bm{#1}}}
\newcommand{\xp}[1]{\bm{x}^{\D{#1}}}
\newcommand{\xpp}[2]{\xp{#1}\cdot \xp{#2}}
\newcommand{\Rx}{\mathcal{R}_{\bm{X}}}
\newcommand{\w}[1]{\omega(\bm{#1})}
\newcommand{\wa}[1]{\omega(#1)}
\newcommand{\wev}[1]{\omega(\langle #1 \rangle)}
\newcommand{\ev}[1]{\langle #1 \rangle }
\newcommand{\dev}[1]{\dim(\langle \bm{#1} \rangle)}
\newcommand{\supp}[1]{\operatorname{supp}(\bm{#1})}
\newcommand{\suppa}[1]{\operatorname{supp}(#1)}
\newcommand{\crc}[1]{\mathcal{M}_{#1}}
\newcommand{\inic}{\operatorname{in}}
\newcommand{\pd}{\operatorname{pd}}
\newcommand{\lcm}{\operatorname{lcm}}
\newcommand{\inicC}[1][\Co]{\inic(I(#1))}
\newcommand{\Kx}{K[\bm{x}]}
\newcommand{\G}{\mathcal{G}}
\newcommand{\ideal}[1]{\mathcal{I}_{#1}}
\newcommand{\GRx}{\G\setminus\Rx}
\newcommand{\Ic}{I(\Co)}
\newcommand{\mun}{\bm{m}_1}
\newcommand{\md}{\bm{m}_2}
\newcommand{\pe}[1]{\left\lfloor \frac{#1}{2} \right\rfloor }
\newcommand{\pep}[1]{\left\lfloor \tfrac{#1}{2} \right\rfloor }
\newcommand{\pecp}[2]{\left\lfloor \tfrac{#1}{#2} \right\rfloor }
\newcommand{\ii}{2}
\newcommand{\I}[1][\Co]{I_{#1}}
\newcommand{\J}[1][\Co]{J_{#1}}
\begin{document}

\title{Gr{\"o}bner bases and the second generalized Hamming weight of a linear code}
\author[1]{Hern\'an de Alba \thanks{Partially supported by SECIHTI Grant CF-2023-G-33}}
\author[2]{Cecilia Mart\'inez-Reyes \thanks{Supported by SECIHTI Grants 4003512 and CF-2023-G-33}}

\affil[1]{SECIHTI - Universidad Aut\'onoma de Zacatecas\\ {\small \texttt{halba@uaz.edu.mx}}}
\affil[2]{Universidad Aut\'onoma de Zacatecas, M\'exico\\{\small\texttt{maria.reyes1@ues.edu.sv}}}
\date{}

\begingroup
\renewcommand\thefootnote{} 
\footnotetext{
2020 \textit{Mathematics Subject Classification}. 13P10, 94B05, 11T71.

\ \textit{Keywords and phrases}. Generalized Hamming Weight, Gr\"obner bases, linear code, minimal free resolution.
}
\endgroup

\maketitle
\begin{abstract}
It is known that for binary codes one can use Gr\"obner bases to obtain a subset of codewords of minimal support that can be used to determine the second generalized Hamming weight of the code. In this paper we establish conditions on a nonbinary code under which the same property holds. We also  construct a family of codes over any nonbinary finite field where the  property does not hold. Furthermore, we prove that whenever the subset obtained via Gr\"obner basis suffices to determine the second generalized Hamming weight, this invariant can also be recovered from the degrees of the syzygies of a minimal free resolution. 
\end{abstract}

\section{Introduction}
The Generalized Hamming Weights of a linear code (GHWs) were introduced in \cite{wei1991}, and since then there has been a growing interest in their study due to their applications within information theory. The GHWs have been investigated for special classes of codes such as cyclic codes \citep{yang2015generalized,feng2002generalized,janwa2002generalized,xiong2016weight,heng2017complete,luo2008weight}, Reed-Muller codes \citep{gonzalez2020generalized,heijnen2002generalized,ramkumar2018determining,geil2008second,martinez2020generalized}, trace codes \citep{stichtenoth2002generalized,guneri2006improvements}, among others. The approaches through which they have been studied are diverse, for example, by employing quadratic forms \citep{li2020weight,liu2023generalized}, Gr{\"o}bner bases \citep{borges2008grobner,marquez-corbella_ideal_2016,garcia2022free} or free resolutions \citep{garcia2022free,johnsen2013hamming}. Of particular interest has been the second GHW \citep{carvalho2013second,chung19912,olaya2015second,sarabia2018second,shim1995second,garcia2022free}. 

Given a linear code $\Co\subset\F{q}{n}$, let $\crc{\Co}$ denote the set of codewords of minimal support of $\Co$. In \citet{johnsen2013hamming}, it is proved that all the GHWs of $\Co$ can be recovered using the minimal free resolution of the monomial ideal associated to $\crc{\Co}$. More recently, in \citet{garcia2022free} it is proved that for linear codes $\Co\subset\F{2}{n}$, one can recover the first and second generalized Hamming weight of $\Co$ using a subset of $\crc{\Co}$. 

The technique used in \cite{garcia2022free} consists in considering a specific subset $M\subset \crc{\Co}$ where one can find two codewords that form a two-dimensional vector space, whose weight is the second GHW of the linear code. In this work, we refer to any subset of  $\crc{\Co}$ that satisfies this property as a $d_2$-test set of $\Co$.  With this terminology, it was proved that for binary codes $\Co$, the set of codewords associated to the reduced Gr{\"o}bner basis of a binomial ideal that can be associated to $\Co$, is a $d_2$-test set. The authors also formulated several questions pointing towards directions of research on the computation of GHWs using Gr\"obner bases.  

Of particular interest for the present work is Question 4 from  \cite{garcia2022free}, which asks whether further results can be obtained in the nonbinary case. To address this question, it was suggested to consider a binomial ideal $\Ic$ from \cite{marquez-corbella_ideal_2016}, which  generalizes the binomial ideal of the binary case.  Hence, given $\Co\subset\F{q}{n}$, we investigate conditions under which the set $M_{\G}$, consisting of the codewords of minimal support associated to the reduced Gr{\"o}bner basis of $\Ic$, is a $d_2$-test set.

More precisely, for any linear code $\Co\subset\F{q}{n}$, we consider two codewords $\mun,\md\in\Co$  such that the weight of the subspace spanned by $\mun$ and $\md$ is the second GHW of $\Co$, under additional conditions to be specified later. Also, we consider the reduced Gr\"obner basis $\G$ of $\Ic$ with respect to degree compatible monomial orders with appropriate structural properties. Within this framework, our first main theorem is (Theorem~\ref{corollary-main}):

\begin{theoremx}\label{in:g-d2-test}
Let $\Co\subset\F{q}{n}$ be a linear code such that $|\suppa{\mun} \cap \suppa{\md}|\leq \frac{\wa{\md}+1}{2}$. Let $\G$ be the reduced Gr\"obner basis of $\Ic$. Then, $M_{\G}$ is a $d_2$-test set, i.e., there exist $\bm{c}_1,\bm{c}_2\in M_\G$ such that $\dim\ev{\bm{c}_1,\bm{c}_2}=2$ and $d_2(\Co)=\wev{\bm{c}_1,\bm{c}_2}$. 
\end{theoremx}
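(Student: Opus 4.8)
The plan is to take a weight optimal pair of minimal codewords spanning the optimal plane and to transport it into $M_\G$ by Gr\"obner reduction, while keeping the support of the spanned plane under control. I will use the correspondence that underlies $M_\G$: each element of the reduced basis is a binomial $\bm{x}^{\bm a}-\bm{x}^{\bm b}\in\G$ whose exponent difference is a codeword of minimal support, and $M_\G$ collects these codewords. Since every $2$-dimensional subcode has support of size at least $d_2(\Co)$, it suffices to exhibit two linearly independent $\bm{c}_1,\bm{c}_2\in M_\G$ whose supports satisfy
\[
\suppa{\bm{c}_1}\cup\suppa{\bm{c}_2}\subseteq\suppa{\mun}\cup\suppa{\md}.
\]
Such an inclusion yields $\wev{\bm{c}_1,\bm{c}_2}\le\wev{\mun,\md}=d_2(\Co)$, and minimality of $d_2(\Co)$ then forces equality, while $\dim\ev{\bm{c}_1,\bm{c}_2}=2$ is immediate once independence is known.

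I would begin by fixing $\mun,\md\in\crc{\Co}$ as arranged in the previous section, so that $\dim\ev{\mun,\md}=2$, $\wev{\mun,\md}=d_2(\Co)$, and $|\suppa{\mun}\cap\suppa{\md}|\le\frac{\wa{\md}+1}{2}$. Writing each of $\mun$ and $\md$ as a difference of its disjoint positive and negative parts, I associate to them the binomials lying in $\Ic$ and run the division algorithm against $\G$. When a basis binomial $\bm{x}^{\bm{a}'}-\bm{x}^{\bm{b}'}\in\G$ rewrites a leading monomial $\bm{x}^{\bm a}$, divisibility gives $\bm{a}'\le\bm a$ componentwise, so the positive part of the $M_\G$-codeword it carries sits inside the positive part of the codeword being reduced; degree compatibility guarantees that the division terminates without raising degrees. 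Collecting the reducers produces candidates $\bm{c}_1$ (from $\mun$) and $\bm{c}_2$ (from $\md$) in $M_\G$.

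The technical heart is to show that the supports of $\bm{c}_1,\bm{c}_2$ do not spill outside $\suppa{\mun}\cup\suppa{\md}$ and that $\bm{c}_1,\bm{c}_2$ are independent, and this is exactly where the overlap hypothesis is used. The bound $|\suppa{\mun}\cap\suppa{\md}|\le\frac{\wa{\md}+1}{2}$ means that $\md$ protrudes from $\suppa{\mun}$ on at least $\frac{\wa{\md}-1}{2}$ coordinates; the complementary smallness of the overlap limits the cancellation available on the common support during reduction, which I would use to keep $\suppa{\bm{c}_2}\subseteq\suppa{\mun}\cup\suppa{\md}$, while the large protrusion prevents the reduced $\bm{c}_2$ from collapsing into $\ev{\bm{c}_1}$, securing $\dim\ev{\bm{c}_1,\bm{c}_2}=2$. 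I expect this support bookkeeping to be the main obstacle: one must pin down precisely which basis binomials can act on the relevant leading monomials, control the negative parts $\bm{b}'$ of the reducers (which a priori may carry support anywhere), and verify that the threshold $\frac{\wa{\md}+1}{2}$ simultaneously rules out support growth and loss of independence. This presumably requires a case analysis on the leading-term supports of the reducers, and is where the structural properties imposed on the monomial order enter decisively.

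Finally, once $\bm{c}_1,\bm{c}_2\in M_\G$ are shown independent with $\suppa{\bm{c}_1}\cup\suppa{\bm{c}_2}\subseteq\suppa{\mun}\cup\suppa{\md}$, the inequality from the first paragraph gives $\wev{\bm{c}_1,\bm{c}_2}=d_2(\Co)$, so $M_\G$ is a $d_2$-test set, completing the proof.
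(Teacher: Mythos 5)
Your high-level reduction is sound: if you produce two independent $\bm{c}_1,\bm{c}_2\in M_\G$ with $\suppa{\bm{c}_1}\cup\suppa{\bm{c}_2}\subseteq \I\cup\J$, then $\wev{\bm{c}_1,\bm{c}_2}=d_2(\Co)$ follows. But two genuine gaps remain. First, your opening premise --- that every binomial in the reduced Gr\"obner basis carries a codeword of minimal support, so that ``$M_\G$ collects these codewords'' --- is false for $q>2$: the set $\mathcal{T}$ of codewords associated to $\GRx$ need not lie in $\crc{\Co}$, which is precisely why $M_\G$ is defined as $\mathcal{T}\cap\crc{\Co}$. So even after producing candidates from $\G$, you must separately prove they have minimal support; the paper does this via Lemma~\ref{lemma-m1-d2} (for $\mun$) and Lemma~\ref{lemma-cg-m2} (for any element of $\bm{M}_2$ whose weight equals $\wa{\md}$), and neither is a formality.

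Second, the ``technical heart'' you defer is exactly where the proof lives, and the mechanism is not the support bookkeeping you describe. The argument does not track supports through a division algorithm; it exploits that $\mun=\min_\prec\bm{M}_1$ and $\md=\min_\prec\bm{M}_2$. For $\mun$: writing $\mun=\bm{a}-\bm{b}$ with disjoint supports and $\w{b}\leq\w{a}\leq\w{b}+1$, the reducer $\xp{u}-\xp{v}\in\G$ of $\xp{a}$ must satisfy $\bm{u}=\bm{a}$ and $\bm{v}=\bm{b}$, because otherwise $\bm{u}-\bm{v}$ (or $\bm{v}-\bm{b}$) precedes $\md$ in the order, hence lies outside $\bm{M}_2$, forcing $\wev{\mun,\bm{u}-\bm{v}}>d_2(\Co)$; combined with $|\I\cap\J|\leq\frac{|\J|+1}{2}$ this yields a numerical contradiction of the form $\w{b}-\tfrac{1}{2}\leq\w{b}-1$. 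For $\md$ the situation is subtler: one does not recover $\md$ itself in general (only when $q=2^s$), but a possibly different element of $\bm{M}_2$ of the same weight, and the decomposition $\md=\bm{a}+\bm{b}_1-\bm{b}_2$ must be chosen so that $\supp{a}=\I\cap\J$ and $\wa{\bm{b}_2}\leq\wa{\bm{a}+\bm{b}_1}\leq\wa{\bm{b}_2}+1$ --- a choice made possible precisely by the overlap hypothesis, and requiring the extra condition \eqref{condicion-menos-orden} on the order in one of the two cases. Your plan of splitting $\mun$ and $\md$ into ``positive and negative parts'' is not canonical over $\Fn{q}$ and does not capture this tailored decomposition; without it the weight inequalities that drive the contradictions are unavailable. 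As written, the proposal is a plausible outline of what must be proved rather than a proof.
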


As we mentioned before, in the binary case $M_\G$ is a $d_2$-test set. Unfortunately, this is no longer true in general. In this work, for each $q>2$ we provide a family of linear codes $\Co\subset\F{q}{n}$ where $M_\G$ is not a $d_2$-test set. This establishes our second main theorem (Theorem~\ref{main-result}):

\begin{theoremx}\label{in:g-d2-no-test}
For each $q>2$, there exist linear codes $\Co\subset\F{q}{n}$ such that $M_{\G}$ is not a $d_2$-test set.
\end{theoremx}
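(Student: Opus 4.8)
This is a construction (counterexample) result, so the plan is to exhibit, for every $q>2$, an explicit family of codes $\Co\subset\F{q}{n}$ together with a proof that $M_{\G}$ fails the $d_2$-test. The guiding principle is to negate the hypothesis of Theorem~\ref{in:g-d2-test} in an essential, unrepairable way: I would design a code whose second generalized Hamming weight is attained by a pair of minimal-support codewords $\mun,\md$ with large overlapping support, so that $|\suppa{\mun}\cap\suppa{\md}|>\tfrac{\wa{\md}+1}{2}$, and then show that the reduction defining $\G$ systematically discards the second direction of the optimal subspace. Merely violating the hypothesis is necessary (otherwise Theorem~\ref{in:g-d2-test} would force success) but not sufficient, so the bulk of the argument is the direct verification that $M_{\G}$ contains no optimal pair.

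Concretely, I would fix an explicit generator matrix over $\Fn{q}$ producing two minimal-support codewords $\mun,\md$ of common weight $w$ whose supports agree on $w-1$ coordinates and differ in one coordinate each, so that $\wev{\mun,\md}=w+1$ and $|\suppa{\mun}\cap\suppa{\md}|=w-1$. Choosing $w\ge 4$ places the example strictly outside the reach of Theorem~\ref{in:g-d2-test}, since then $w-1>\tfrac{w+1}{2}$. Using the extra nonzero elements of $\Fn{q}$ (available precisely because $q>2$), the field values on the $w-1$ shared coordinates are tuned so that none of the remaining lines of $D^{\ast}:=\ev{\mun,\md}$ produces an accidental low-weight codeword; the remaining generators are then chosen so that $d_2(\Co)=w+1$, the subspace $D^{\ast}$ is the unique optimizer, and the only minimal-support codewords contained in $D^{\ast}$ are the scalar multiples of $\mun$ and of $\md$. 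Verifying these three properties is a finite weight-and-support computation in $\Co$.

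With the geometry fixed, I would compute the reduced Gr\"obner basis $\G$ of $\Ic$ for the degree-compatible orders considered in the paper and read off $M_{\G}\subseteq\crc{\Co}$. The target is to show that the canonical form forced by the term order represents the $\md$-direction of $D^{\ast}$ by a codeword of non-minimal support; since $M_{\G}$ collects only minimal-support codewords, that codeword never enters $M_{\G}$, and hence $M_{\G}\cap D^{\ast}$ is contained in a single line (the multiples of $\mun$). Combined with the uniqueness of $D^{\ast}$ and the structure of its minimal codewords, no two elements of $M_{\G}$ can span a $2$-dimensional subspace of weight $d_2(\Co)$, so $M_{\G}$ is not a $d_2$-test set.

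The main obstacle is carrying out this Gr\"obner-basis analysis uniformly in $q$: I must describe the reduced Gr\"obner basis of $\Ic$ precisely enough to certify that $\md$ (and every scalar adjustment of it that would restore $\wev{\cdot}=d_2$) is reduced away for all $q>2$ and all admissible monomial orders, not merely in a single machine-checked instance. This is also where the hypothesis $q>2$ is genuinely used: over $\Fn{2}$ the $\Delta$-encoding carries no scalar ambiguity and the binomial reductions preserve both directions of $D^{\ast}$, which is why the binary result of \cite{garcia2022free} holds, whereas for $q>2$ the additional field elements produce binomials whose leading terms are chosen so as to delete $\md$ from $M_{\G}$. Pinning down this deletion in closed form, together with the weight computations that isolate $D^{\ast}$, is the technical heart of the construction.
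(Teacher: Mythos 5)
There is a genuine gap, in fact two. First, your proposed support geometry is infeasible for $q=3$: you ask for $\mun,\md$ of common weight $w\ge 4$ with $|\I\cap\J|=w-1$, but Proposition~\ref{lema-intersectionIJ} forces $|\I\cap\J|\le\tfrac{q-1}{q}|\I|$ for \emph{every} linear code, which for $q=3$ gives $w-1\le\tfrac{2}{3}w$, i.e.\ $w\le 3$ --- incompatible with $w\ge 4$. Since the theorem must cover all $q>2$, the construction as described cannot even get started at $q=3$ (and is extremely constrained for $q=4,5$). The paper instead takes an auxiliary two-dimensional code $D'$ with all nonzero words of weight $2q$ in $\F{q}{2q+2}$, for which $|\I[D']\cap\J[D']|=2q-2>\tfrac{|\J[D']|+1}{2}$ holds for every $q>2$ while respecting the $\tfrac{q-1}{q}$ bound.

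Second, and more seriously, the step you yourself flag as ``the technical heart'' --- certifying, uniformly in $q$ and in the admissible monomial order, that the reduced Gr\"obner basis contains no binomial whose associated codeword lies in the second direction of the optimal subspace --- is exactly the step you have not supplied, and it does not follow from the geometry you set up. The paper resolves it with a specific device absent from your proposal: starting from $D'$ it enlarges the ambient space and adjoins, for \emph{every} word $\bm{u}'$ of weight $r=\lfloor|\J[D']|/2\rfloor$ supported inside $\suppa{D'}$, a new generator $\bm{u}-\bm{v}$ where $\bm{v}$ lives on $r$ fresh, pairwise disjoint coordinates. The order is arranged so that $\xp{u}\succ\xp{v}$, hence every such $\xp{u}$ lies in $\inicC$. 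Then any $f=\xp{a}-\xp{b}\in\G\setminus\Rx$ with $\bm{c}_f\in\ev{\bm{c}_1,\bm{c}_2}\setminus\ev{\bm{c}_1}$ would have $\w{b}\ge r$ and $\supp{b}\subset\suppa{D'}$ (using $\w{b}\le\w{a}\le\w{b}+1$ and $\wa{\bm{c}_f}\ge 2r$), so $\xp{b}$ is divisible by some $\xp{u_j}\in\inicC$, contradicting reducedness (Proposition~\ref{out-c1}). This is paired with a case analysis (Proposition~\ref{d2C}, relying on $d_2(D')<3r$ and $d_2(D')<|\I[D']|+r$ from Lemma~\ref{l:desig}) showing the extended code still has a \emph{unique} optimal two-dimensional subspace. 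Without an analogue of this saturation-of-the-initial-ideal mechanism, your claim that the term order ``systematically discards'' the $\md$-direction remains an unproven assertion rather than a proof.
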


Finally, we show that whenever $M_\G$ is a $d_2$-test set of a linear code $\Co\subset \F{q}{n}$, the minimum Hamming weight $d_1(\Co)$ and the second GHW $d_2(\Co)$ can be determined using free resolutions in a way similar to the approach of \cite{johnsen2013hamming} and \cite{garcia2022free}. More precisely, our third main result establishes the following (Theorem~\ref{ym-free-re}):

\begin{theoremx}\label{in:g-free}
Let $\Co \subset \F{q}{n}$ be a linear code of dimension $k$. 
Let $M \subset \crc{\Co}$. Let $K$ be a field and let $R = K[x_1, x_2, \ldots, x_n]$.
Define $S_M=\{\sigma:\exists \bm{c}\in M\text{ s. t. }\supp{c}=\sigma\}$
where $|S_M| \geq 2$. 
Let $\ideal{M}=\langle \bm{x}^{\sigma}:\sigma\in S_M \rangle \subset R$. Then

\begin{enumerate}
\item $\min \{j \mid \beta_{1,j}(R/\ideal{M})\neq 0\}=d_1(\Co)$  if and only if $M$ contains a codeword of minimum Hamming weight.
\item $\min \{j \mid \beta_{2,j}(R/\ideal{M})\neq 0\}=d_2(\Co)$ if and only if $M$ is a $d_2$-test set.
\end{enumerate}
Where $\beta_{i,j}(R/\ideal{M})$ are the Betti numbers of the graded minimal free resolution of $R/\ideal{M}$. 
\end{theoremx}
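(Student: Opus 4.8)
The plan is to reduce both statements to explicit descriptions of the lowest degrees appearing in the minimal free resolution of $R/\ideal{M}$, and then to match these degrees with $d_1(\Co)$ and $d_2(\Co)$ through the combinatorics of the supports in $S_M$. First I would record two elementary facts. Since every $\sigma\in S_M$ is the support of a codeword of minimal support, the sets in $S_M$ form an antichain under inclusion (a minimal support strictly contained in another would contradict minimality), so the squarefree monomials $\{\bm{x}^{\sigma}:\sigma\in S_M\}$ are pairwise non-divisible and are therefore exactly the minimal generators of $\ideal{M}$. Moreover, two codewords of $\crc{\Co}$ sharing a support are scalar multiples of one another; hence distinct elements of $S_M$ correspond to linearly independent codewords, and for a two-dimensional subcode one has $\suppa{\bm{c}_1}\cup\suppa{\bm{c}_2}=\bigcup_{\bm{c}\in\ev{\bm{c}_1,\bm{c}_2}}\suppa{\bm{c}}$, so that $\wev{\bm{c}_1,\bm{c}_2}=|\suppa{\bm{c}_1}\cup\suppa{\bm{c}_2}|$.

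For statement (1), I would use that $\beta_{1,j}(R/\ideal{M})$ counts the minimal generators of $\ideal{M}$ of degree $j$, which by the above is the number of $\sigma\in S_M$ with $|\sigma|=j$. Hence $\min\{j:\beta_{1,j}\neq0\}=\min_{\sigma\in S_M}|\sigma|$. Since each such $\sigma$ is the support of a nonzero codeword, $|\sigma|\ge d_1(\Co)$, so this minimum equals $d_1(\Co)$ exactly when some $\sigma\in S_M$ has $|\sigma|=d_1(\Co)$, that is, exactly when $M$ contains a codeword of minimum Hamming weight.

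For statement (2), the crux is to prove $\min\{j:\beta_{2,j}(R/\ideal{M})\neq0\}=\min\{|\sigma\cup\tau|:\sigma,\tau\in S_M,\ \sigma\neq\tau\}$. Writing $d$ for the right-hand side, I would first show that the first syzygy module $\operatorname{Syz}_1$ of the generators vanishes in degrees below $d$: in any homogeneous syzygy $\sum_{\sigma}a_{\sigma}e_{\sigma}$ of degree $j$, a monomial of $a_{\sigma}\bm{x}^{\sigma}$ must cancel against a monomial of some $a_{\tau}\bm{x}^{\tau}$ with $\tau\neq\sigma$, forcing $j\ge\deg\lcm(\bm{x}^{\sigma},\bm{x}^{\tau})=|\sigma\cup\tau|\ge d$. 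In degree $d$ the Koszul syzygy of a minimizing pair is nonzero, and since $\operatorname{Syz}_1$ vanishes below degree $d$ we have $(\mathfrak{m}\operatorname{Syz}_1)_d=0$, so every element of $(\operatorname{Syz}_1)_d$ is a minimal generator; thus $\beta_{2,d}\neq0$ while $\beta_{2,j}=0$ for $j<d$. This argument is characteristic-free, consistent with $K$ being an arbitrary field.

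Finally I would translate $d$ into code-theoretic terms. By the preliminary facts, $d=\min\{\wev{\bm{c},\bm{c}'}:\bm{c},\bm{c}'\in M,\ \dim\ev{\bm{c},\bm{c}'}=2\}$; since each such $\ev{\bm{c},\bm{c}'}$ is a two-dimensional subcode we get $d\ge d_2(\Co)$, and the minimum is attained by some pair because $S_M$ is finite. Hence $d=d_2(\Co)$ holds if and only if some pair $\bm{c},\bm{c}'\in M$ with $\dim\ev{\bm{c},\bm{c}'}=2$ satisfies $\wev{\bm{c},\bm{c}'}=d_2(\Co)$, which is precisely the definition of a $d_2$-test set. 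I expect the main obstacle to be the syzygy-degree computation in statement (2)—verifying that the lowest nonzero degree of $\beta_{2,\bullet}$ is exactly the smallest lcm-degree of a pair of generators and that minimality in that degree is automatic—whereas the remaining identifications reduce to bookkeeping with supports.
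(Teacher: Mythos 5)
Your proposal is correct, and statement (1) together with the reduction of statement (2) to the identity $\min\{j\mid\beta_{2,j}(R/\ideal{M})\neq 0\}=\min\{|\sigma\cup\tau|:\sigma,\tau\in S_M,\ \sigma\neq\tau\}$ follows the same skeleton as the paper's Proposition~\ref{lemma-d1-d2-test}. Where you genuinely diverge is in how that identity is established. The paper observes that $\ker\partial_1$ is generated by the Koszul syzygies $S_{ij}$, extracts a minimal generating set $A$ from among them (so every minimal second-syzygy degree is automatically $\geq a$), and then, assuming the minimizing $S_{ij}$ is not in $A$, writes it explicitly as a combination of elements of $A$, compares coefficients of $e_i$, and uses divisibility of the $\lcm$'s to exhibit an element of $A$ of degree exactly $a$. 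You instead compute the initial degree of $\operatorname{Syz}_1$ directly: the cancellation argument shows $(\operatorname{Syz}_1)_j=0$ for $j<d$, the Koszul syzygy of a minimizing pair shows $(\operatorname{Syz}_1)_d\neq 0$, and graded Nakayama then gives $\beta_{2,j}=\dim_K(\operatorname{Syz}_1/\mathfrak{m}\operatorname{Syz}_1)_j$, which vanishes for $j<d$ and is nonzero at $j=d$. Your route is somewhat cleaner: it avoids having to justify that a minimal generating set of $\ker\partial_1$ can be chosen inside $\{S_{ij}\}$ and avoids the coefficient-extraction step, at the cost of invoking the general fact that the initial degree of a finitely generated graded module is attained by a minimal generator. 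The final translation into code-theoretic terms (antichain property of $S_M$, equal supports forcing proportionality, $\wev{\bm{c},\bm{c}'}=|\suppa{\bm{c}}\cup\suppa{\bm{c}'}|$, and $d\geq d_2(\Co)$ with equality precisely when $M$ is a $d_2$-test set) matches the paper's bookkeeping.
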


\medskip

Thus, by combining Theorems \ref{in:g-d2-no-test} and \ref{in:g-free}, we conclude that, in general, the minimal free resolution of $R/I_{M_\G}$ does not always allow $d_2(\Co)$ to be recovered in the same way as in the binary case. 

\medskip

The paper is organized as follows. Section~\ref{s:premilinaries} collects the notation and preliminary material required throughout the article. In Section ~\ref{s:d2-test-set} we introduce the notion of a $d_2$-test set and define the sets that allow us to formulate a sufficient condition for a subset of codewords with minimal support to be a $d_2$-test set. Section~\ref{s:mg-si-d2} specializes this result by providing a sufficient condition for $M_\G$  to be a $d_2$-test set. In Section~\ref{s:mg-no-d2}, for any $q> 2$, we construct a family of linear codes $\Co\subset\F{q}{n}$ where $M_\G$ fails to be a  $d_2$-test set. Finally, Section~\ref{s:free-res} investigates the minimal free resolution of a monomial ideal associated to a subset $M$ of $\crc{\Co}$, and shows how the degrees of their syzygies are related to the condition that $M$ is a $d_2$-test set.

\section{Preliminaries}\label{s:premilinaries}

\subsection{Basic notions}

Let $\Co\subset\F{q}{n}$ be a linear code of dimension $k$. The elements in $\F{q}{n}$ are called words, and the elements in $\Co$ are called codewords of $\Co$.  For clarity, we may write $\Co(n,k)_q$ to explicitly indicate the dimension of $\Co$ and the vector space in which it lies, but, in general, we simply write $\Co$ when no confusion arises. A matrix $G \in \Fn{q}^{k \times n}$ is defined as a generator matrix of $\Co$ if its rows form a basis for $\Co$, while a matrix $H \in \Fn{q}^{(n-k) \times n}$ is called a parity check matrix of $\Co$ if the null space of $H$ coincides with $\Co$.
If $V\subset\F{q}{n}$, we write $\ev{V}$ to denote the vector space generated by $V$ over the field $\Fn{q}$. For $r\in \mathbb{N}$, we use $[r]$ to denote the set $\{1,2,\ldots,r\}$.

Given $\bm{a}=(a_1,a_2,\ldots,a_n)\in\F{q}{n}$, its support is defined as $\supp{a}=\{i\in [n]:a_i\neq 0\}$. The Hamming weight of $\bm{a}$ is the cardinality of $\supp{a}$ and is denoted by $\w{a}$. The minimum Hamming weight of a linear code $\Co$ is $d(\Co)=\min \{\w{a}:\bm{a}\in\Co\setminus\{\bm{0}\}\}$.

If $\Co(n,k)_q$ is a linear code, and $D$ is a vector subspace of $\Co$, the support of $D$ is the set $\suppa{D}=\{i\in[n]:\exists \bm{c}\in D \text{ such that }i\in \supp{c}\}$, and the weight of $D$ is $\wa{D}=|\suppa{D}|$. For $i\in[k]$, the $i$-th generalized Hamming weight of $\Co$ is 
\begin{equation*}
d_i(\Co)=\min\{\wa{D}: D \text{ is a vector subspace of $\Co$ where} \dim (D)=i\}. 
\end{equation*}
When $D$ is a vector subspace of $\Co$ of dimension $1$, $\wa{D}$ is the Hamming weight of any codeword that generates $D$. Then, $d_1(\Co)=d(\Co)$. Hence, we denote by $d_1(\Co)$ the minimum Hamming weight of $\Co$.

A codeword $\bm{c}\in\Co\setminus\{\bm{0}\}$ has minimal support if its support does not properly contain the support of any other nonzero codeword. We use $\crc{\Co}$ to denote the set of codewords of minimal support of $\Co$.

\begin{Remark}\label{Remark-independiente}
If $\bm{c},\bm{c}'\in \crc{\Co}$ and $\dim\ev{\bm{c},\bm{c}'}=2$, then $\supp{c}\neq \suppa{\bm{c}'}$. If this were not the case,  we could find $\lambda\in\Fn{q}\setminus\{0\}$ such that $\suppa{\bm{c}-\lambda \bm{c}' }\subsetneq \supp{c}$, where $\bm{0}\neq \bm{c}-\lambda \bm{c}' \in \Co$, contradicting the fact that $\bm{c}\in\crc{\Co}$.
\end{Remark}

Let us now introduce some notation and concepts regarding polynomial rings.

Let $K$ be a field and $R=K[x_1,x_2,\ldots,x_n]$. For $S\subset R$,  without risk of confusion, we denote by $\langle S \rangle $ the ideal generated by $S$. 
A monomial order $\prec$ on the monomials of $R$ is a total order such that, for any monomials $u,v,w$ in $R$,
\begin{multicols}{2}
\begin{enumerate}[(a)]
\item if $u\neq 1$, then $1\prec u$;
\item if $u\prec v$, then $uw\prec vw$.
\end{enumerate}
\end{multicols}

For more details on monomial orders, see, for instance,  \cite{herzog2018binomialx}.

Let $f=\sum_{i=1}^sc_iu_i\in R$ where $f\neq 0$  and $c_i\in K\setminus\{0\}$. The initial monomial of $f$ with respect to $\prec$, is defined as $\inic_{\prec}(f)=\max_{\prec}\{u_1,u_2,\ldots,u_s\}$. Given an ideal $\ev{0}\neq I\subset R$, the initial ideal of $I$ with respect to $\prec$ is $$\inic_{\prec}(I)=\langle\inic_{\prec}(f):0\neq f\in I \rangle.$$

Given a fixed monomial order $\prec$ on $R$, and an ideal $\ev{0}\neq I\subset R$, a set $\G=\{g_1,\ldots,g_s\}\subset I$ is called a Gr{\"o}bner basis of $I$, if $\inic_{\prec}(I)=\langle \inic_{\prec}(g_1),\ldots,\inic_{\prec}(g_s) \rangle$. Moreover, $\G$ is called the reduced Gr{\"o}bner basis of $I$ with respect to $\prec$ if  $g_i$ is monic for all $i\in[s]$, and, for $i,j\in[s]$ where $i\neq j$, none of the monomials appearing in $g_j$ is divisible by $\inic_{\prec}(g_i)$. It is a well-known fact that the reduced Gr{\"o}bner basis of $I$ for a fixed monomial order is unique. 

If $f\in R$, there exists a unique $g\in R$ such that $f-g\in I$ and no monomial in the expansion of $g$ belongs to $\inic_{\prec}(I)$. In this case, we call $g$, the canonical form of $f$ with respect to $I$ and $\prec$, and it is denoted by $g=\Canf{f}{I}$.

We omit the reference to the order $\prec$ when it is clear from the context.

\subsection{A binomial ideal associated to a linear code}\label{s:pre-ideal}
In this subsection we discuss a binomial ideal that can be associated to a linear code $\Co\subset\F{q}{n}$. This ideal is introduced in \citet{marquez-corbella_ideal_2016}.

Let $\Co\subset\F{q}{n}$ be a linear code and $\Fn{q}=\{0,\alpha,\alpha^2,\ldots,\alpha^{q-1}=1\}$, where $\alpha$ is a fixed primitive element of $\Fn{q}$. Let $\bm{x}$ denote vector indeterminates $X_1,X_2,\ldots,X_n$, where each $X_i$ can be decomposed in $q-1$ subindeterminates $x_{i,1},\ldots,x_{i,q-1}$. Hence, in this context, $\bm{x}$ denotes the tuple of indeterminates $\bm{x} = (x_{1,1},\ldots,x_{1,q-1}, \dots, x_{n,1},\ldots,x_{n,q-1}).$

Let $\Kx$  be the ring of polynomials over a field $K$, defined as
\begin{equation}
\Kx=K[X_1,X_2,\ldots,X_n]=K[x_{i,j}:1\leq i\leq n,\  1\leq j\leq q-1].\label{eq:ring-kx}
\end{equation}  

Let $E_{q}=\{e_1,\ldots,e_{q-1}\}$ be the canonical basis of $\mathbb{N}^{q-1}$, and let $E$ be the set
$$E=\{(u_1,u_2,\ldots,u_n)\mid u_i \in E_q\cup\{\bm{0}\} \}\subset \mathbb{N}^{n(q-1)},\quad \text{where } \bm{0}\in \mathbb{N}^{q-1}.$$

Consider the bijective function
\begin{align*}
\Delta&:\F{q}{n}&&\to &&E\\
(a_1,a_2,&\ldots,a_n)&&\mapsto &&(u_1,u_2,\ldots,u_n), \quad \text{where }u_i=\begin{cases} e_{j_i}, & a_i=\alpha^{j_i}\\ \bm{0}, & a_i=0 \end{cases}.
 \end{align*}

Finally, consider the binomial ideal
\begin{equation}
\Ic=\langle \xp{a}-\xp{b}\mid \bm{a}-\bm{b}\in\Co\rangle.
\label{eq:ideal-generalizado}
\end{equation}

Given $u,v\in [q-1]$ and $\alpha^u,\alpha^v\in\Fn{q}$, we can have $\alpha^u+\alpha^v=\alpha^w $ for some $w\in [q-1]$, or $\alpha^u+\alpha^v=0 $. For each $i\in [n]$, define the ideals: $$\mathcal{R}_{X_i}=\langle x_{i,u}x_{i,v}-x_{i,w}\mid \alpha^u+\alpha^v=\alpha^w \rangle+\langle  x_{i,u}x_{i,v}-1\mid \alpha^u+\alpha^v=0\rangle.$$

Let $\Rx=\sum_{i=1}^n \mathcal{R}_{X_i}\subset\Kx$. $\Rx$ is an ideal contained in $\Ic$ \cite[Theorem 2.3]{marquez-corbella_ideal_2016}.

\begin{Proposition}\cite[Section 2]{marquez-corbella_ideal_2016}.
For  $\bm{a},\bm{b}\in\F{q}{n}$, it follows that:
\begin{enumerate}
\item\label{p1-aa} $\deg{(\xp{a})}=\w{a}$ (using the standard graduation in $\Kx$).  
\item If $\supp{a}\cap\supp{b}=\emptyset$,  then $\xp{(a+b)}=\xp{a}\cdot\xp{b}$. 
\item $\xp{(a+b)}=\xp{a}\cdot \xp{b} \mod \Rx$. 
\end{enumerate}    
\end{Proposition}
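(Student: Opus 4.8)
The plan is to reduce all three statements to a coordinatewise analysis of the map $\Delta$, since every monomial $\xp{a}$ factors as a product of blocks, one for each coordinate $i\in[n]$. Concretely, for $\bm{a}=(a_1,\dots,a_n)$ I would write $\xp{a}=\prod_{i=1}^n m_i(\bm{a})$, where $m_i(\bm{a})=x_{i,j}$ if $a_i=\alpha^{j}\neq 0$ and $m_i(\bm{a})=1$ if $a_i=0$; this is exactly the monomial read off from the $i$-th block $e_j$ (respectively $\bm 0$) of $\D{a}$. Each of the three claims then follows by controlling these factors block by block.

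For (1), I would observe that $m_i(\bm{a})$ has degree $1$ precisely when $a_i\neq 0$ and degree $0$ otherwise, so $\deg(\xp{a})=\sum_{i=1}^n\deg(m_i(\bm{a}))=|\{i:a_i\neq 0\}|=|\supp{a}|=\w{a}$. For (2), when $\supp{a}\cap\supp{b}=\emptyset$ the exponent vectors $\D{a}$ and $\D{b}$ have disjoint blocks of nonzero entries, so in each block at most one of $a_i,b_i$ is nonzero; hence $\Delta(\bm{a}+\bm{b})=\D{a}+\D{b}$, and the exponent-addition rule for monomials gives $\xp{(a+b)}=\bm{x}^{\D{a}+\D{b}}=\xp{a}\cdot\xp{b}$. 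Equivalently, block by block, $m_i(\bm{a}+\bm{b})=m_i(\bm{a})\,m_i(\bm{b})$.

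Statement (3) is the only one with real content, since now the supports may overlap and $\Delta$ is no longer additive; the key point is that the defining binomials of $\Rx$ repair exactly those overlapping blocks. I would fix $i$ and show $m_i(\bm{a})\,m_i(\bm{b})\equiv m_i(\bm{a}+\bm{b})\pmod{\mathcal{R}_{X_i}}$ by cases. If $a_i=0$ or $b_i=0$, this is the equality already used in (2). If $a_i=\alpha^u$ and $b_i=\alpha^v$ are both nonzero, then either $\alpha^u+\alpha^v=\alpha^w\neq 0$, in which case the generator $x_{i,u}x_{i,v}-x_{i,w}\in\mathcal{R}_{X_i}$ yields $m_i(\bm{a})\,m_i(\bm{b})=x_{i,u}x_{i,v}\equiv x_{i,w}=m_i(\bm{a}+\bm{b})$; or $\alpha^u+\alpha^v=0$, in which case the generator $x_{i,u}x_{i,v}-1\in\mathcal{R}_{X_i}$ yields $x_{i,u}x_{i,v}\equiv 1=m_i(\bm{a}+\bm{b})$. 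Since $\mathcal{R}_{X_i}\subset\Rx$ and congruence modulo the ideal $\Rx$ is compatible with multiplication, taking the product over all $i$ gives $\xp{a}\cdot\xp{b}=\prod_i m_i(\bm{a})\,m_i(\bm{b})\equiv\prod_i m_i(\bm{a}+\bm{b})=\xp{(a+b)}\pmod{\Rx}$.

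The argument is essentially a direct verification, so the main thing to get right is the bookkeeping in (3): matching the two possible outcomes of the field sum $\alpha^u+\alpha^v$ (a nonzero power of $\alpha$, versus $0$) against the two families of generators of $\mathcal{R}_{X_i}$, and confirming that the per-block congruences can be multiplied together because $\Rx$ is an ideal. Parts (1) and (2) require no more than unwinding the definition of $\Delta$.
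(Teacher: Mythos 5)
Your verification is correct. Note, however, that the paper itself gives no proof of this proposition: it is quoted verbatim from Section~2 of the cited reference of M\'arquez-Corbella et al., so there is no internal argument to compare against. Your blockwise factorization $\xp{a}=\prod_{i=1}^n m_i(\bm{a})$ is exactly the right way to unwind the definition of $\Delta$: part (1) is immediate from counting degree-one factors, part (2) from the additivity of $\Delta$ on coordinates where at most one entry is nonzero, and part (3) correctly matches the two outcomes of the field sum $\alpha^u+\alpha^v$ (a nonzero power of $\alpha$, or zero) against the two families of generators of $\mathcal{R}_{X_i}$, with the per-coordinate congruences multiplied together legitimately because $\Rx$ is an ideal. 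Nothing is missing; this is the standard direct argument one would expect in the source.
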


Let us now discuss some properties regarding reduced Gr\"obner bases of $\Ic$.

Let $\prec$ be a monomial order on the monomials of $\Kx$. We consider the order in the indeterminates 
\begin{equation}
X_n\prec X_{n-1}\prec \ldots \prec X_2\prec X_1, \label{orden-indeterminadas}
\end{equation}
this means that for any $i\in[n-1]$ and $j,\ell\in[q-1]$, we have $x_{i+1,\ell}\prec x_{i,j}$.

Given $i\in[n]$, the order between the subindeterminates $x_{i,j}$ where  $j\in[q-1]$ can be chosen arbitrarily, but, for the sake of clarity, we shall choose the order given by $x_{i,q-1}\prec x_{i,q-2}\prec \ldots \prec x_{i,2}\prec x_{i,1}$.

\begin{Proposition}\label{prop-division-binomios-grobner}\cite[Proposition 3]
{marquez-corbella_ideal_2016}.  
Let $\Co\subset \F{q}{n}$ be a linear code,  let $\prec$ be a  degree compatible order  on the monomials of $\Kx$, and let $\G$ be the reduced Gr\"obner basis of $\Ic$ with respect to $\prec$. If $f\in \G\setminus \Rx$, there exist $\bm{a},\bm{b}\in\F{q}{n}$ such that $f=\xp{a}-\xp{b}$ where $\xp{b}\prec \xp{a}$ and $\w{b}\leq \w{a}\leq \w{b}+1$.
\end{Proposition}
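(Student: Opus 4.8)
The plan is to first show that the two monomials of $f$ are of the form $\xp{a}$ and $\xp{b}$, and then to control their degrees. Since $\Ic$ is generated by pure-difference binomials, its reduced Gr\"obner basis $\G$ consists of pure-difference binomials; thus $f=u-v$ with $u=\inic_{\prec}(f)\succ v$ and $u,v$ monic. The key preliminary observation is that the standard monomials of $\Kx$ modulo $\Rx$ are exactly the monomials $\xp{c}$ with $\bm{c}\in\F{q}{n}$: for every block $i$ and every pair $s,t$, a generator $x_{i,s}x_{i,t}-m$ of $\Rx$ (with $m\in\{x_{i,w},1\}$) has leading term $x_{i,s}x_{i,t}$ under a degree-compatible order, so every monomial with a block of degree $\ge 2$ lies in $\inic_{\prec}(\Rx)\subseteq\inic_{\prec}(\Ic)$. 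Since $v$ is the trailing monomial of a reduced Gr\"obner basis element it is standard modulo $\Ic$, hence standard modulo $\Rx$, so $v=\xp{b}$ for some $\bm{b}$.

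To treat the leading monomial $u$, recall that $\{\inic_{\prec}(g):g\in\G\}$ is the minimal generating set of $\inic_{\prec}(\Ic)$. Each $x_{i,s}x_{i,t}$ lies in $\inic_{\prec}(\Ic)$, and comparing the relation $x_{i,s}x_{i,t}-m\in\Rx$ with any Gr\"obner element whose leading term divides it (using that $m$ is itself standard, so a difference of standard monomials lying in $\Ic$ must vanish) shows that $x_{i,s}x_{i,t}$ is a minimal generator and that the corresponding element of $\G$ coincides with this $\Rx$-relation; in particular it lies in $\G\cap\Rx$. Were $u$ divisible by some $x_{i,s}x_{i,t}$, reducedness together with $f\in\GRx$ (so $f$ differs from that $\Rx$-relation) would be violated. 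Hence $u$ is block-standard, i.e. $u=\xp{a}$, and $f=\xp{a}-\xp{b}$ with $\xp{b}\prec\xp{a}$. This is the only place where the hypothesis $f\notin\Rx$ is used.

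The lower bound $\w{b}\le\w{a}$ is immediate from degree-compatibility of $\prec$, the relation $\xp{b}\prec\xp{a}$, and $\deg(\xp{c})=\w{c}$. For the rest I would use the identification $\Kx/\Rx\cong K[\F{q}{n}]$ furnished by $\xp{c}\cdot\xp{d}\equiv\xp{(c+d)}\pmod{\Rx}$, under which $\Ic/\Rx$ corresponds to the ideal of the additive subgroup $\Co$; consequently, for standard monomials, $\xp{a}-\xp{b}\in\Ic$ holds if and only if $\bm{a}-\bm{b}\in\Co$. Thus $\bm{c}:=\bm{a}-\bm{b}\in\Co$, and setting $A_0=\supp{a}\setminus\supp{b}$ one has $|A_0|\ge\w{a}-\w{b}\ge 2$ under the assumption to be refuted.

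For the upper bound I argue by contradiction, assuming $\w{a}\ge\w{b}+2$. Picking $i_0\in A_0$ and putting $\bm{p}=\bm{a}-a_{i_0}\bm{e}_{i_0}$, $\bm{q}=\bm{b}-a_{i_0}\bm{e}_{i_0}$, we get $\bm{p}-\bm{q}=\bm{c}\in\Co$, hence $\xp{p}-\xp{q}\in\Ic$, while $\xp{p}=\xp{a}/x_{i_0,a'_{i_0}}$ is a proper divisor of $\xp{a}$ with $\deg(\xp{p})=\w{a}-1$ and $\deg(\xp{q})=\w{b}+1$. If $\w{a}-\w{b}\ge 3$, then $\deg(\xp{q})<\deg(\xp{p})$, so $\xp{q}\prec\xp{p}$ and $\xp{p}=\inic_{\prec}(\xp{p}-\xp{q})\in\inic_{\prec}(\Ic)$; this proper divisor of $\xp{a}=\inic_{\prec}(f)$ contradicts the minimality of $\xp{a}$ as a generator of $\inic_{\prec}(\Ic)$. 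The remaining, and main, difficulty is the boundary case $\w{a}=\w{b}+2$, where $\deg(\xp{p})=\deg(\xp{q})$ and $\xp{q}\prec\xp{p}$ can no longer be read off from the degree. Here I would exploit the freedom in choosing $i_0\in A_0$ (there are at least two candidates) together with the explicit term order — the variable ranking $X_n\prec\cdots\prec X_1$ of \eqref{orden-indeterminadas} and the chosen order on the subindeterminates — selecting $i_0$, for instance of largest index in $A_0$, so that $\xp{q}\prec\xp{p}$; the same contradiction then closes the argument. Verifying this tie-breaking comparison for the given degree-compatible order is the technical heart of the proof and the step I expect to require the most care.
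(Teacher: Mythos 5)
The paper does not actually prove this proposition; it is quoted verbatim from \cite[Proposition~3]{marquez-corbella_ideal_2016}, so your attempt can only be judged on its own terms. The first half of your argument (both monomials of $f$ are of the form $\xp{a}$, $\xp{b}$) is sound in outline, but the parenthetical ``$m$ is itself standard'' is a genuine assertion, not a triviality: $m=x_{i,w}$ is standard modulo $\Rx$ by inspection, but what you need is standardness modulo $\Ic$, and that holds only because $\Co$ is closed under scalar multiplication --- if $x_{i,w}$ were congruent modulo $\Ic$ to $1$, to a variable in a later block, or to another variable of block $i$, then scaling the corresponding codeword by $\alpha^{s-w}$ would show $x_{i,s}\in\inicC$, contradicting the minimality of $x_{i,s}x_{i,t}$ as a generator. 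This case analysis is missing but can be supplied, so it is a repairable gap.

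The serious problem is the boundary case $\w{a}=\w{b}+2$, which you defer and propose to close by choosing $i_0\in A_0$ so that $\xp{q}\prec\xp{p}$. No such choice exists. Since $\xp{a}=\inic_{\prec}(f)$ with $f$ in the reduced Gr\"obner basis, $\xp{a}$ is a minimal generator of $\inicC$, so every proper divisor of $\xp{a}$ --- in particular $\xp{p}=\xp{a}/x_{i_0,j}$ for any $i_0\in\supp{a}$ --- lies outside $\inicC$. But $\xp{p}-\xp{q}$ is a nonzero element of $\Ic$, so its initial monomial lies in $\inicC$ and must therefore be $\xp{q}$; hence $\xp{p}\prec\xp{q}$ for \emph{every} admissible $i_0$, and no tie-breaking on the variable order can reverse this. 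Your construction therefore proves only $\w{b}\leq\w{a}\leq\w{b}+2$, and the step you flagged as ``the technical heart'' is not a delicate verification but a dead end. Ruling out $\w{a}=\w{b}+2$ needs a different idea --- for instance, exploiting that the argument above forces $\xp{b}\cdot x_{i_0,j_0'}\in\inicC$ for each of the at least two indices $i_0\in A_0$, and extracting a contradiction from the Gr\"obner basis elements whose leading terms divide these monomials --- or simply following the proof in \cite{marquez-corbella_ideal_2016}.
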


\subsection{An order on the elements of $\F{q}{n}$}\label{s:pre-order}

In this subsection, we establish some conventions adopted in the paper.

Throughout the paper, $K$ denotes a field. Given a linear code $\Co\subset\F{q}{n}$, we let $\Kx$ denote the polynomial ring defined in \eqref{eq:ring-kx}.  Unless otherwise stated, we use  the notation introduced in Subsection~\ref{s:pre-ideal} for tuples of indeterminates: $$\bm{x} = (x_{1,1}\ldots,x_{1,q-1}, \dots, x_{n,1},\ldots,x_{n,q-1}).$$

In Subsection~\ref{s:pre-stanley} and Section~\ref{s:free-res}, however, we use the notation $\bm{x} = (x_1, x_2, \ldots, x_n)$.

We consider only degree compatible orders with the standard graduation in $\Kx$. Moreover, we use the order on the indeterminates given by \eqref{orden-indeterminadas}.

Let $\prec$ be an order on $\Kx$, we define an order on the elements of $\F{q}{n}$. Let $\bm{a},\bm{b}\in\F{q}{n}$: 
\begin{equation}
\bm{a}\prec \bm{b} \Leftrightarrow \xp{a}\prec \xp{b}.
\label{eq:orden}
\end{equation}
The order defined in this way on $\F{q}{n}$ is total, since $\Delta$ is a bijective function.

Let $\G$ denote the reduced Gr\"obner basis of $\Ic$, defined in \eqref{eq:ideal-generalizado}.

\begin{Remark}
Let $\bm{a},\bm{b}\in\F{q}{n}$. Since we are using a degree compatible order, if $\bm{a}\prec \bm{b}$, then $\w{a}\leq \w{b}$. Moreover,  $\xp{(a+b)}\preceq \xpp{a}{b}$.
\end{Remark}

For $f=\xp{a}-\xp{b}\in \Ic$ where $\xp{b}\prec\xp{a}$, the codeword associated to $f$ is defined as $\bm{c}_f=\bm{a}-\bm{b}$. 

\begin{Lemma}\label{soportes-rem}
Let $\Co \subset\F{q}{n}$ be a linear code and $\G$ be the reduced Gr{\"o}bner basis of $\Ic$. If $f=\xp{a}-\xp{b}\in \GRx$ where $\xp{b}\prec \xp{a}$, then $\suppa{\bm{c}_f}=\supp{a}\cup \supp{b}$.
\begin{proof}
Let $\bm{a}=(a_1,a_2,\ldots,a_n)$ and $\bm{b}=(b_1,b_2,\ldots,b_n)$. 

The case $\suppa{\bm{c}_f}=\suppa{\bm{a}-\bm{b}}\subset \supp{a}\cup \supp{b}$ is straightforward. For the other inclusion, let $i\in \supp{a}\cup \supp{b}$. Suppose that $i\not\in \suppa{\bm{a}-\bm{b}}$. 

Then, $i\in \supp{a}\cap \supp{b}$. Since $i\not\in \suppa{\bm{a}-\bm{b}}$, it follows that $a_i=b_i=\alpha^j$ for some $j\in[q-1]$. Hence,  $x_{i,j}\mid \xp{a}$ and $x_{i,j}\mid \xp{b}$. Since $\xp{b}\prec\xp{a}$, it follows that
\begin{equation}
\xp{(b-\alpha^j e_i)}=\frac{\xp{b}}{x_{i,j}}\prec \frac{\xp{a}}{x_{i,j}}=\xp{(a-\alpha^j e_i)}. \label{leq:-sup-j}
\end{equation}
Since $(\bm{a}-\alpha^j \bm{e_i})-(\bm{b}-\alpha^j \bm{e_i})=\bm{a}-\bm{b}\in \Co$, we have $\xp{(a-\alpha^j e_i)}-\xp{(b-\alpha^j e_i)}\in \Ic$.  Then, from \eqref{leq:-sup-j} it follows that $\xp{(a-\alpha^j e_i)}\in \inicC$.  However,  $\xp{(a-\alpha^j e_i)}$ divides $\xp{a}$ and $\xp{a}$ is a minimal generator of $\inicC$. Then, $i\in \suppa{\bm{a}-\bm{b}}$.
\end{proof}
\end{Lemma}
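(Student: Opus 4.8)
The plan is to establish the two inclusions separately. The inclusion $\suppa{\bm{c}_f}\subseteq\supp{a}\cup\supp{b}$ I expect to be immediate: writing $\bm{c}_f=\bm{a}-\bm{b}$, a coordinate of the difference can vanish through cancellation but can never become nonzero where both $\bm{a}$ and $\bm{b}$ are zero, so every index in $\suppa{\bm{a}-\bm{b}}$ already lies in $\supp{a}\cup\supp{b}$. All the content is in the reverse inclusion, which amounts to showing that no cancellation occurs, i.e.\ that there is no coordinate $i$ with $a_i=b_i\neq 0$.

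To prove $\supp{a}\cup\supp{b}\subseteq\suppa{\bm{c}_f}$ I would argue by contradiction. Suppose some $i\in\supp{a}\cup\supp{b}$ satisfies $i\notin\suppa{\bm{a}-\bm{b}}$. Then coordinate $i$ must cancel, forcing $a_i=b_i\neq 0$; writing $a_i=b_i=\alpha^j$ for some $j\in[q-1]$ and using the encoding $\Delta$, this says precisely that the single subindeterminate $x_{i,j}$ divides both $\xp{a}$ and $\xp{b}$. The idea is then to cancel this common factor to produce a proper divisor of $\xp{a}$ lying in the initial ideal $\inicC$, contradicting that $\xp{a}=\inic_{\prec}(f)$ is a minimal generator.

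Carrying this out, I would set $\xp{(a-\alpha^j e_i)}=\xp{a}/x_{i,j}$ and $\xp{(b-\alpha^j e_i)}=\xp{b}/x_{i,j}$. Because $\prec$ is a monomial order, cancelling the common factor $x_{i,j}$ preserves the strict inequality $\xp{b}\prec\xp{a}$, so $\xp{(b-\alpha^j e_i)}\prec\xp{(a-\alpha^j e_i)}$. Since $(\bm{a}-\alpha^j\bm{e_i})-(\bm{b}-\alpha^j\bm{e_i})=\bm{a}-\bm{b}\in\Co$, the binomial $\xp{(a-\alpha^j e_i)}-\xp{(b-\alpha^j e_i)}$ lies in $\Ic$, and its leading term $\xp{(a-\alpha^j e_i)}$ therefore belongs to $\inicC$.

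The contradiction then comes from minimality, and this is the step I expect to be the crux. Because $f\in\GRx$ sits in the reduced Gr\"obner basis, its initial monomial $\xp{a}$ is a minimal generator of $\inicC$; yet $\xp{(a-\alpha^j e_i)}=\xp{a}/x_{i,j}$ is a proper divisor of $\xp{a}$ that already lies in $\inicC$, which is impossible. The delicate points to get right are that leading terms of reduced Gr\"obner basis elements really are the minimal generators of the initial ideal, and that a coordinate cancellation at position $i$ translates cleanly into division of the associated monomials by the one subindeterminate $x_{i,j}$—a translation that relies on the structure of the map $\Delta$ and on working with a degree-compatible order.
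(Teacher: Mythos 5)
Your proposal is correct and follows essentially the same route as the paper's own proof: cancellation at a coordinate forces $a_i=b_i=\alpha^j$, dividing out the common subindeterminate $x_{i,j}$ preserves the order and yields a binomial in $\Ic$ whose leading monomial is a proper divisor of $\xp{a}$, contradicting the minimality of $\xp{a}$ as a generator of $\inicC$ coming from $f$ being in the reduced Gr\"obner basis. No substantive differences.
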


\subsection{Minimal free resolutions of rings associated to codes}\label{s:pre-stanley}

In this subsection, we discuss some properties of the minimal free resolution of a square-free monomial ideal associated to the supports of codewords in a code $\Co$.

Let $K$ be a field and let $R=K[x_1,\ldots,x_n]$. In this subsection, and later in Section~\ref{s:free-res}, we use the  notation $\bm{x}=(x_1,x_2,\ldots,x_n)$.  Hence, for $\sigma \subset [n]$ and $\sigma\neq\emptyset$, let $\bm{x}^\sigma$ denote the monomial  $\textstyle\prod_{i\in \sigma}x_i\in R$ and $\bm{x}^{\emptyset}=1$.

Let $I\subset R$ be an ideal. By Hilbert’s Syzygy Theorem, $R/I$ admits a minimal free resolution as a $\mathbb{N}$-graded module
\begin{equation}
0 \longrightarrow F_{\rho} \stackrel{\partial_{\rho}}\longrightarrow \cdots \stackrel{\partial_{3}}\longrightarrow F_2 \stackrel{\partial_{2}}\longrightarrow F_1 \stackrel{\partial_{1}}\longrightarrow F_0=R \stackrel{\partial_{0}}\longrightarrow R/I \longrightarrow 0, \label{res-sr}
\end{equation}
where $
F_i=\textstyle \bigoplus_{j \in \mathbb{N}} R(-j)^{\beta_{i, j}}
$ for $i\in\{0,1,..,\rho\}$. The integer $\rho$ is called the projective dimension of $R/I$ and is denoted by $\pd(R/I)$.
The numbers $\beta_{i, j}$ are called the $\mathbb{N}$-graded Betti numbers of $R/I$ over $K$, and are independent of the choice of the minimal free resolution. 

Let us focus in the monomial ideal associated with the supports of codewords in a code. Given a linear code $\Co(n,k)_q$ and $ M\subset \crc{\Co}$, we denote the set of supports of codewords in $M$ as $S_M$. Hence, $S_M=\{\sigma\subset [n]:\exists \bm{c}\in M\text{ s. t. }\supp{c}=\sigma\}$. We associate to $M$ the monomial ideal
\begin{equation}
\ideal{M}=\langle \bm{x}^{\sigma}\mid \sigma\in S_M  \rangle \subset R. \label{ideal-M-g}
\end{equation}

In \citet{johnsen2013hamming}, it is proved that the Betti numbers of $R/\ideal{\crc{\Co}}$ are independent of the field $K$ and $\pd(R/\ideal{\crc{\Co}})=k$. In addition, the authors prove that the GHWs of $\Co$ can be determined using the Betti numbers of $R/\ideal{\crc{\Co}}$. More precisely, we have the following:

\begin{Teorema}\label{prop:betti-verdure}\cite[Theorem 2]{johnsen2013hamming}.
Let $\Co(n,k)_q$ be a linear code. Then 
\begin{equation}
d_i(\mathcal{C})=\min \{j \mid \beta_{i,j}(R/\ideal{\crc{\Co}})\neq 0\}, \qquad i\in [k].\label{beti-verdure-teo}
\end{equation}
    
\end{Teorema}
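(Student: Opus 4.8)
<br />

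The plan is to reinterpret both sides of \eqref{beti-verdure-teo} in terms of the matroid $M_H$ attached to a parity check matrix $H$ of $\Co$, defined on the ground set $[n]$ by declaring $\sigma\subseteq[n]$ independent exactly when the corresponding columns of $H$ are linearly independent, and then to match them through Hochster's formula; this is the argument of \cite{johnsen2013hamming}. The first step is to identify $\ideal{\crc{\Co}}$ with the Stanley--Reisner ideal of the independence complex $\mathrm{IN}(M_H)$. Indeed, $\bm{c}\in\crc{\Co}$ iff $\supp{c}$ is a minimal dependent set of columns of $H$, i.e.\ a circuit of $M_H$; conversely every circuit $\sigma$ is realized, via the (necessarily fully supported) linear dependence among its columns, as the support of a minimal-support codeword. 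Hence $S_{\crc{\Co}}$ is precisely the set of circuits of $M_H$, and since the circuits are the minimal non-faces of $\mathrm{IN}(M_H)$, the monomials $\bm{x}^{\sigma}$ with $\sigma\in S_{\crc{\Co}}$ form the minimal generating set of $I_{\mathrm{IN}(M_H)}$.

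Next I would record the matroid description of the generalized Hamming weights. For $\sigma\subseteq[n]$ let $r(\sigma)$ be the rank in $M_H$ and $\mathrm{null}(\sigma)=|\sigma|-r(\sigma)$. Since the subcode $\{\bm{c}\in\Co:\supp{c}\subseteq\sigma\}$ is the kernel of the columns of $H$ indexed by $\sigma$, it has dimension $\mathrm{null}(\sigma)$, and from the definition $d_i(\Co)=\min\{\wa{D}:\dim D=i\}$ together with $\wa{D}=|\suppa{D}|$ one obtains $d_i(\Co)=\min\{|\sigma|:\mathrm{null}(\sigma)\ge i\}$. A short exchange argument then refines this: removing a non-coloop of $M_H|_\sigma$ drops both $|\sigma|$ and $\mathrm{null}(\sigma)$ by one, while removing a coloop drops $|\sigma|$ leaving $\mathrm{null}(\sigma)$ fixed, so a minimizer can be chosen with $\mathrm{null}(\sigma)=i$ and with $M_H|_\sigma$ coloop-free. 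Thus $d_i(\Co)=\min\{|\sigma|:\mathrm{null}(\sigma)=i,\ M_H|_\sigma\text{ has no coloops}\}$.

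The core is Hochster's formula $\beta_{i,j}(R/I_{\mathrm{IN}(M_H)})=\sum_{|\sigma|=j}\dim_K\tilde H_{j-i-1}(\mathrm{IN}(M_H)|_\sigma;K)$, where $\mathrm{IN}(M_H)|_\sigma=\mathrm{IN}(M_H|_\sigma)$, combined with two topological facts about matroid complexes: (i) by shellability (Provan--Billera, Björner) such complexes are Cohen--Macaulay over every field, so their reduced homology is free and concentrated in the top degree $r(\sigma)-1$; and (ii) this top homology is nonzero exactly when $M_H|_\sigma$ has no coloops. Fact (i) forces any nonzero summand to satisfy $j-i-1=r(\sigma)-1$, i.e.\ $\mathrm{null}(\sigma)=i$, so $\beta_{i,j}\neq0$ iff there is a coloop-free $\sigma$ with $|\sigma|=j$ and $\mathrm{null}(\sigma)=i$. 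Comparing with the refined formula for $d_i(\Co)$ gives $\min\{j:\beta_{i,j}(R/\ideal{\crc{\Co}})\neq0\}=d_i(\Co)$; field-independence is automatic because the relevant homology is free abelian, so its rank, hence its vanishing, does not depend on $K$.

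The step I expect to be the main obstacle is fact (ii), the exact nonvanishing criterion for the top reduced homology of a restricted matroid complex. The coloop direction is clean: a coloop $e$ lies in every basis and in no circuit, so every independent set extends by $e$ and $\mathrm{IN}(M_H|_\sigma)$ is a cone with apex $e$, hence contractible. The converse, that a coloop-free restriction actually has nonzero top homology (equivalently nonzero unsigned reduced Euler characteristic), is not a one-line computation and requires the structural homology theory of matroid complexes. Secondary technical points are verifying the concentration-with-free-ranks statement of fact (i) over an arbitrary field and checking the exchange argument that pins the nullity to exactly $i$; both are routine once the matroid dictionary and Hochster's formula are in place.
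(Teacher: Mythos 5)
Your proposal is correct, and it reconstructs essentially the argument of the paper's source: the paper does not prove Theorem~\ref{prop:betti-verdure} at all but quotes it from \cite[Theorem 2]{johnsen2013hamming}, whose proof is precisely the route you take --- identifying $\ideal{\crc{\Co}}$ with the Stanley--Reisner ideal of the independence complex of the parity-check matroid, rewriting $d_i(\Co)$ as the minimal size of a coloop-free subset of nullity $i$, and applying Hochster's formula together with shellability (hence purity and top-concentrated, field-independent homology) of matroid complexes. The one step you flag as the main obstacle, nonvanishing of top reduced homology for coloop-free restrictions, is indeed the standard citable ingredient (Bj\"orner's theory of matroid complexes, where the top Betti number is the Tutte evaluation $T_{M}(0,1)$, positive exactly in the coloop-free case), so your outline is complete modulo that known fact.
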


\section{On $d_2$-test sets}\label{s:d2-test-set}

The concept of a test set was first introduced in \citet{goos_minimun_1997}, and has been studied by several authors \cite{borges-quintana_computing_2015,borges2008grobner,pinto_fan_2015,helleseth_error-correction_2005,marquez-corbella_ideal_2016}. In \cite{garcia2022free} it is defined a $\G_{\prec}$-test for binary codes $\Co$, as a subset of $\crc{\Co}$ where we can find a codeword of minimum Hamming weight $d_1(\Co)$. 

We can generalize this concept for $d_2(\Co)$ by introducing the notion of a $d_2$-test set.

\begin{Definicion}[$d_2$-test set]
Given a linear code $\Co\subset\F{q}{n}$, we say that a set $M\subset \crc{\Co}$ is a $d_2$-test set of $\Co$, if there exist $\bm{c}_1,\bm{c}_2\in M$ such that $\dim\ev{\bm{c}_1,\bm{c}_2}=2$, and $\wev{\bm{c}_1,\bm{c}_2}=d_2(\Co)$.
\end{Definicion}

Let $\Co\subset\F{q}{n}$ be a linear code and consider the set $\mathcal{T}$ consisting of the nonzero codewords associated with the reduced Gr\"obner basis $\G$ of $\Ic$, i.e. 
$$\mathcal{T}=\{\bm{c}\in \Co \mid \exists g\in\GRx\text{ s. t. } \bm{c}_g=\bm{c}\}.$$ 

\begin{Remark}
In the binary case $\mathcal{T}\subset\crc{\Co}$ \cite{borges2008grobner}. Moreover, it is a $d_2$-test set \cite[Theorem 2]{garcia2022free}. However, if $\Co\subset\F{q}{n}$,  $\mathcal{T}$ is not always a  subset of $\crc{\Co}$ \cite[Section 4]{marquez-corbella_ideal_2016}.    
\end{Remark}
In order to deal with the issues mentioned  in  the previous Remark, let us define:
\begin{equation}
M_\G=\{\bm{c}\in \Co \mid \exists g\in\GRx\text{ s. t. } \bm{c}_g=\bm{c}\}\cap \crc{\Co}.   \label{def-Mg}
\end{equation}

The set $M_\G$ contains a codeword of minimum Hamming weight. More precisely, we have the following:

\begin{Proposition}\label{prop:d1-Mg}\cite[Section 3]{marquez-corbella_ideal_2016}. 
Let $\Co\subset\F{q}{n}$ be a linear code and let $\G$ be the reduced Gr\"obner basis of $\Ic$. Then, there exists $\bm{c}\in M_\G$ such that $\w{c}=d_1(\Co)$.
\end{Proposition}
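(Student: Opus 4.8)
The plan is to isolate a minimum-weight codeword whose associated monomial is $\prec$-smallest and show that it must itself be the codeword $\bm{c}_g$ of a Gröbner basis element. First I note that any $\bm{c}\in\Co$ with $\w{c}=d_1(\Co)$ automatically has minimal support, since a proper subsupport would give a nonzero codeword of strictly smaller weight; hence $\bm{c}\in\crc{\Co}$, and it suffices to produce such a $\bm{c}$ lying in $\mathcal{T}$, i.e. with $\bm{c}=\bm{c}_g$ for some $g\in\GRx$. Among all codewords of weight $d_1(\Co)$, I would pick $\bm{c}$ so that $\xp{c}$ is minimal with respect to $\prec$. As $\bm{c}\neq\bm{0}$, the binomial $\xp{c}-1$ lies in $\Ic$ with initial monomial $\xp{c}$, so $\xp{c}\in\inicC$ and therefore $\inic_\prec(g)\mid\xp{c}$ for some $g\in\G$. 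Because $\xp{c}$ involves at most one subindeterminate per block $X_i$, so does $\inic_\prec(g)$; since every minimal generator of $\inic_\prec(\Rx)$ has degree at least two within a single block (the subindeterminates represent units of $\Fn{q}$), no such block-squarefree monomial lies in $\inic_\prec(\Rx)$, which forces $g\in\GRx$.

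Next I would invoke Proposition~\ref{prop-division-binomios-grobner} to write $g=\xp{a}-\xp{b}$ with $\xp{b}\prec\xp{a}=\inic_\prec(g)$ and $\w{b}\le\w{a}\le\w{b}+1$. From $\xp{a}\mid\xp{c}$ I get $\supp{a}\subseteq\supp{c}$ with $\bm{a}$ and $\bm{c}$ agreeing on $\supp{a}$, so $\bm{c}-\bm{a}$ is supported on $\supp{c}\setminus\supp{a}$ and, by the disjoint-support multiplicativity, $\xp{c}=\xp{a}\cdot\xp{(c-a)}$. I then consider the codeword $\bm{c}'=\bm{c}-\bm{c}_g=(\bm{c}-\bm{a})+\bm{b}$, whose support is contained in $(\supp{c}\setminus\supp{a})\cup\supp{b}$, so that, using $\w{b}\le\w{a}$,
$$\w{c'}\le\big(d_1(\Co)-\w{a}\big)+\w{b}\le d_1(\Co).$$

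It remains to split into two cases. If $\bm{c}'=\bm{0}$, then $\bm{c}=\bm{c}_g\in\mathcal{T}$, and since $\bm{c}\in\crc{\Co}$ we conclude $\bm{c}\in M_\G$ with $\w{c}=d_1(\Co)$, as desired. If instead $\bm{c}'\neq\bm{0}$, then being a nonzero codeword it satisfies $\w{c'}\ge d_1(\Co)$, hence $\w{c'}=d_1(\Co)$; but
$$\xp{c'}=\xp{((c-a)+b)}\preceq\xp{(c-a)}\cdot\xp{b}=\frac{\xp{c}}{\xp{a}}\cdot\xp{b}\prec\frac{\xp{c}}{\xp{a}}\cdot\xp{a}=\xp{c},$$
where the first inequality is the Remark preceding Lemma~\ref{soportes-rem} and the strict one follows from $\xp{b}\prec\xp{a}$ on multiplying by the monomial $\xp{c}/\xp{a}$. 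This contradicts the minimality of $\xp{c}$, ruling out the case. The main difficulty is the bookkeeping of this single reduction step: one must simultaneously keep the weight from exceeding $d_1(\Co)$—which is exactly what the inequality $\w{b}\le\w{a}$ from Proposition~\ref{prop-division-binomios-grobner} provides—and secure a strict $\prec$-decrease, all while first confirming that the dividing Gröbner element lies in $\GRx$ rather than in $\Rx$, so that Proposition~\ref{prop-division-binomios-grobner} applies.
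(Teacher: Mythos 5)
The paper does not actually prove this proposition: it is quoted from \cite[Section 3]{marquez-corbella_ideal_2016}, so there is no internal argument to compare yours against. Your proof is correct and self-contained, and it is a descent argument very much in the spirit of the paper's own proof of Proposition~\ref{prop-m1}: choose, among the codewords of weight $d_1(\Co)$ (all of which automatically lie in $\crc{\Co}$), one whose monomial $\xp{c}$ is $\prec$-minimal; locate $g=\xp{a}-\xp{b}\in\GRx$ with $\xp{a}\mid\xp{c}$; and show that $\bm{c}\neq\bm{c}_g$ would produce another minimum-weight codeword $\bm{c}-\bm{c}_g$ with a strictly $\prec$-smaller monomial. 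The two inequalities you track, $\w{b}\leq\w{a}$ to keep the weight of $\bm{c}-\bm{c}_g$ from exceeding $d_1(\Co)$ and $\xp{b}\prec\xp{a}$ to force the strict descent, are exactly the right ingredients, and both come from Proposition~\ref{prop-division-binomios-grobner}. The only step left at the level of a plausibility argument is that the Gr\"obner element whose initial monomial divides $\xp{c}$ lies in $\GRx$ rather than in $\Rx$ (equivalently, that no block-squarefree monomial $\xp{d}$ belongs to $\inic_{\prec}(\Rx)$); this is true, but a complete justification needs the identification of the standard monomials of $\mathcal{R}_{X_i}$ with $\{1,x_{i,1},\ldots,x_{i,q-1}\}$, i.e.\ $\dim_K K[x_{i,1},\ldots,x_{i,q-1}]/\mathcal{R}_{X_i}=q$, together with the fact that the blocks involve disjoint variables. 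Since the paper itself invokes exactly the same fact without proof in Proposition~\ref{prop-m1} (``there exists $h=\xp{u}-\xp{v}\in\GRx$ with $\xp{u}\mid\xp{a}$''), your argument is at the same level of rigor as the surrounding text.
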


Let us introduce some sets that play a key role in determining when $M_\G$ is a $d_2$-test set. These sets generalize similar sets introduced in the binary case in \cite{garcia2022free}.

For a linear code $\Co(n,k)_q$ where $k\geq 2$, denote by $\bm{M}_{1}$ the set:  $$\bm{M}_1=\left\{\bm{m} \in \mathcal{C}\setminus\{\bm{0}\} \mid \exists \bm{m}' \in \Co \text { such that } d_2(\Co)=\wev{\bm{m},\bm{m}'}\right\}. $$
Using the order in \eqref{eq:orden}, define $\mun=\min_{\prec} (\bm{M}_1)$, and the set $$\bm{M}_2=\left\{\bm{m} \in\Co \mid d_{\ii}(\mathcal{C})=\wev{\mun, \bm{m}}\right\}.$$
Now, let $\bm{m}_2=\min_{\prec}\bm{M}_2$. Finally, denote 
$\I =\suppa{\mun}$ and $\J =\suppa{\md}$.

By the definitions of $\mun$ and $\md$, we have
\begin{equation}
d_2(\Co)=\wev{\bm{m}_1,\bm{m}_2}=|\suppa{\bm{m}_1}\cup\suppa{\bm{m}_2}|=|\I\cup \J |.
\label{eq:d2-equi}
\end{equation}

When an explicit reference to the linear code $\Co$ is needed, we write $\mun(\Co)$ and $\md(\Co)$ in place of $\mun$ and $\md$. Analogously, we write $\bm{M}_1(\Co)$ and $\bm{M}_2(\Co)$ for the sets
$\bm{M}_1$ and $\bm{M}_2$.

\begin{Lemma}\label{lemma-m1-d2}
Let $\Co\subset\F{q}{n}$ be a linear code. We have  $|\I|=\w{\mun}<d_2(\Co)$. In particular $\mun\in \crc{\Co}$.
\begin{proof} Because of the definition of $\mun$, there exists $\bm{m}\in\Co$ such that $d_2(\Co)=\wev{\mun,\bm{m}}$. Hence
$$d_2(\Co)=|\supp{m}\cup\suppa{\mun}|\geq |\suppa{\mun}|=\wa{\mun}.$$

Let us assume, for the sake of contradiction, that $d_2(\Co)=\wa{\mun}$. Let $\bm{c}_1,\bm{c}_2\in\Co$ such that $\dim\ev{\bm{c}_1,\bm{c}_2}=2$ and $d_2(\Co)=\wev{\bm{c}_1,\bm{c}_2}$. Without loss of generality, let $\bm{c}_1\prec\bm{c}_2$. Since $\bm{c}_1\in \bm{M}_1$, and $\mun=\min_{\prec}\bm{M}_1$, it follows that $\mun\preceq\bm{c}_1$. Hence, $\wa{\mun}\leq \wa{\bm{c}_1}$. Therefore $$d_2(\Co)=|\suppa{\bm{c}_1}\cup\suppa{\bm{c}_2}|\geq |\suppa{\bm{c}_1}|=\wa{\bm{c}_1}\geq \wa{\mun}=d_2(\Co).$$
Then, $d_2(\Co)=|\suppa{\bm{c}_1}\cup \suppa{\bm{c}_2}|=|\suppa{\bm{c}_1}|=\w{\bm{c}_1}=\wa{\mun}$. 
Hence, $\suppa{\bm{c}_2}\subset \suppa{\bm{c}_1}$. Since $\bm{c}_1\prec \bm{c}_2$, it follows that $\wa{\bm{c}_1} \leq \wa{\bm{c}_2}$. Thus, $\suppa{\bm{c}_2}= \suppa{\bm{c}_1}$.  Since $\dim \ev{\bm{c}_1,\bm{c}_2}=2$, one can choose $\lambda  \in \Fn{q}\setminus\{0\}$ such that $\bm{c}=\bm{c}_1-\lambda \bm{c}_2\neq 0$ and $\supp{c}\subsetneq \suppa{\bm{c}_1}$. Hence
\begin{equation}
0<\w{c}<\wa{\bm{c}_1}. \label{a1n}
\end{equation}

On the other hand, since $\ev{\bm{c}_1,\bm{c}_2}=\ev{\bm{c}_1,\bm{c}}$, $\bm{c}\in \bm{M}_1$. Hence, $\mun\preceq\bm{c}$. Then, $\wa{\bm{c}_1}=\wa{\mun}\leq \w{c}$, which  contradicts \eqref{a1n}. Therefore, $\wa{\mun}<d_2(\Co)$.

Now, if $\mun\not \in\crc{\Co}$, there exists $\bm{0}\neq \bm{c}\in\Co$  that $\supp{c}\subsetneq \suppa{\mun}$. Then, $\dim(\ev{\mun,\bm{c}})=2$. Hence, $\wev{\mun,\bm{c}}\geq d_2(\Co)$. However
$$\wev{\mun,\bm{c}}=|\suppa{\mun}\cup\supp{c}|=|\suppa{\mun}|=\wa{\mun}<d_2(\Co).$$
Thus, $\mun\in\crc{\Co}$.
\end{proof}
\end{Lemma}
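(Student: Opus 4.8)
The plan is to separate the easy inequality $\wa{\mun}\le d_2(\Co)$ from the strict one, put essentially all the work into the strict bound, and then deduce $\mun\in\crc{\Co}$ as a quick corollary. For the easy half, since $\mun\in\bm{M}_1$ there is some $\bm{m}'\in\Co$ with $d_2(\Co)=\wev{\mun,\bm{m}'}=|\suppa{\mun}\cup\suppa{\bm{m}'}|\ge|\suppa{\mun}|=\wa{\mun}$. The entire difficulty lies in excluding equality, which I would attack by contradiction: assume $\wa{\mun}=d_2(\Co)$.

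Under this assumption I would fix a pair $\bm{c}_1,\bm{c}_2$ realizing the second weight, that is $\dim\ev{\bm{c}_1,\bm{c}_2}=2$ and $\wev{\bm{c}_1,\bm{c}_2}=d_2(\Co)$, ordered so that $\bm{c}_1\prec\bm{c}_2$. Since $\bm{c}_1\in\bm{M}_1$ and $\mun=\min_\prec\bm{M}_1$, we have $\mun\preceq\bm{c}_1$, and degree compatibility of $\prec$ then forces $\wa{\mun}\le\wa{\bm{c}_1}$. Chaining this into $d_2(\Co)=|\suppa{\bm{c}_1}\cup\suppa{\bm{c}_2}|\ge\wa{\bm{c}_1}\ge\wa{\mun}=d_2(\Co)$ collapses every inequality to an equality, so $\suppa{\bm{c}_2}\subseteq\suppa{\bm{c}_1}$; combining this with $\wa{\bm{c}_1}\le\wa{\bm{c}_2}$ (again from $\bm{c}_1\prec\bm{c}_2$) upgrades the inclusion to $\suppa{\bm{c}_1}=\suppa{\bm{c}_2}$.

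The crux is to manufacture from this configuration a codeword of $\bm{M}_1$ whose weight is strictly below $\wa{\mun}$. Because $\bm{c}_1,\bm{c}_2$ are linearly independent yet share the same support, I can pick a coordinate $i$ in that support and set $\lambda=(\bm{c}_1)_i/(\bm{c}_2)_i\in\Fn{q}\setminus\{0\}$, so that $\bm{c}:=\bm{c}_1-\lambda\bm{c}_2$ is nonzero (by independence) and vanishes at $i$; hence $\supp{c}\subsetneq\suppa{\bm{c}_1}$ and $0<\w{c}<\wa{\bm{c}_1}$. The key observation is that $\ev{\bm{c}_1,\bm{c}_2}=\ev{\bm{c}_1,\bm{c}}$, so $\bm{c}$ still lies in a two-dimensional subspace of weight $d_2(\Co)$ and therefore $\bm{c}\in\bm{M}_1$. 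Minimality of $\mun$ now yields $\wa{\mun}\le\w{c}<\wa{\bm{c}_1}=\wa{\mun}$, the desired contradiction, establishing $\wa{\mun}<d_2(\Co)$.

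Finally, for $\mun\in\crc{\Co}$ I would argue by contradiction once more: if $\mun\notin\crc{\Co}$, there is a nonzero $\bm{c}\in\Co$ with $\supp{c}\subsetneq\suppa{\mun}$, which forces $\dim\ev{\mun,\bm{c}}=2$ and hence $\wev{\mun,\bm{c}}\ge d_2(\Co)$; yet $\wev{\mun,\bm{c}}=|\suppa{\mun}\cup\supp{c}|=\wa{\mun}<d_2(\Co)$ by the strict bound just proved, a contradiction. I expect the main obstacle to be precisely the strict-inequality step, and within it the passage from $\suppa{\bm{c}_2}\subseteq\suppa{\bm{c}_1}$ to the equality $\suppa{\bm{c}_1}=\suppa{\bm{c}_2}$: this equality is what guarantees that the cancellation $\bm{c}_1-\lambda\bm{c}_2$ produces a codeword of strictly smaller support, and without it the combination need not dip below $\wa{\mun}$, so the contradiction with minimality would not materialize.
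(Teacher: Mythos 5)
Your proposal is correct and follows essentially the same argument as the paper's proof: the same collapse of inequalities under the assumption $\wa{\mun}=d_2(\Co)$, the same construction of $\bm{c}=\bm{c}_1-\lambda\bm{c}_2$ with strictly smaller support contradicting the minimality of $\mun$ in $\bm{M}_1$, and the same quick argument for $\mun\in\crc{\Co}$. The only cosmetic difference is that you make the choice of $\lambda$ explicit via a shared support coordinate, which the paper leaves implicit.
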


\begin{Corollary}
Let $\Co\subset \F{q}{n}$ be a linear code. Then, $\bm{0}\not\in \bm{M}_{2}$ and $\I \neq \J $. In particular, $\dim \ev{\mun,\md}=2$ and $\bm{M}_{2}\subset \bm{M}_{1}$. As a result $\mun\prec \md$, and therefore
\begin{equation}
|\I|\leq |\J |. \label{m1Mm2}
\end{equation}    
\end{Corollary}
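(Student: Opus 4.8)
The plan is to extract everything from two facts already proved about the canonical pair $(\mun,\md)$: Lemma~\ref{lemma-m1-d2}, which supplies the \emph{strict} inequality $|\I|=\wa{\mun}<d_2(\Co)$ together with $\mun\in\crc{\Co}$ (in particular $\mun\neq\bm{0}$), and the identity \eqref{eq:d2-equi}, $d_2(\Co)=\wev{\mun,\md}=|\I\cup\J|$. The strictness in Lemma~\ref{lemma-m1-d2} is the whole engine: it is what excludes each degenerate configuration in turn.

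First I would settle the two basic assertions. For $\bm{0}\not\in\bm{M}_2$: if $\bm{0}\in\bm{M}_2$, then by definition $d_2(\Co)=\wev{\mun,\bm{0}}=\wa{\mun}=|\I|$, contradicting $|\I|<d_2(\Co)$. For $\I\neq\J$: if $\I=\J$, then \eqref{eq:d2-equi} gives $d_2(\Co)=|\I\cup\J|=|\I|=\wa{\mun}<d_2(\Co)$, again absurd. The two ``in particular'' statements then follow at once. Since $\mun\neq\bm{0}$ and $\md\neq\bm{0}$, a dependence $\md=\lambda\mun$ would force $\J=\suppa{\md}=\suppa{\mun}=\I$, contrary to $\I\neq\J$; hence $\dim\ev{\mun,\md}=2$. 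And for $\bm{M}_2\subset\bm{M}_1$, any $\bm{m}\in\bm{M}_2$ is nonzero and satisfies $d_2(\Co)=\wev{\mun,\bm{m}}$, so taking $\bm{m}'=\mun$ in the definition of $\bm{M}_1$ shows $\bm{m}\in\bm{M}_1$.

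The remaining claims then cascade. From $\md\in\bm{M}_2\subset\bm{M}_1$ and $\mun=\min_\prec\bm{M}_1$ we get $\mun\preceq\md$; and $\mun\neq\md$ (otherwise $\I=\J$), so $\mun\prec\md$. Finally, since the order is degree compatible, $\bm{a}\prec\bm{b}$ implies $\wa{\bm{a}}\leq\wa{\bm{b}}$, whence $|\I|=\wa{\mun}\leq\wa{\md}=|\J|$.

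I do not expect a genuine obstacle: the proof is a short chain of contradictions, and the only subtlety worth flagging is that a single input, the \emph{strict} bound $|\I|<d_2(\Co)$ of Lemma~\ref{lemma-m1-d2}, is what simultaneously forces $\bm{0}\not\in\bm{M}_2$ and $\I\neq\J$, after which non-degeneracy of $(\mun,\md)$ and the weight inequality are automatic.
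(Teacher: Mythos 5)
Your proof is correct, and since the paper states this Corollary without proof, your argument is precisely the intended derivation: everything cascades from the strict inequality $|\I|=\wa{\mun}<d_2(\Co)$ of Lemma~\ref{lemma-m1-d2}, the identity \eqref{eq:d2-equi}, the minimality defining $\mun$ and $\md$, and degree compatibility of the order. No gaps; each step (including the nonvanishing of $\mun$ and $\md$ needed for $\dim\ev{\mun,\md}=2$) is properly justified.
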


\begin{Remark}\label{re:M1-M2-dim2}
For a linear code $\Co=\Co(n,2)_q$, we have $\bm{M}_1=\Co\setminus\{0\}$ and $\bm{M}_2=\Co\setminus \ev{\mun}$.
\end{Remark}

\begin{Lemma} Let $\Co\subset\F{q}{n}$ be a linear code. We have  $|\J|=\w{\md}<d_2(\Co)$.

\begin{proof}  
Since $d_2(\Co)=|\I \cup \J|$, it follows that $d_2(\Co)\geq |\J|=\wa{\md}$. The condition $d_2(\Co)=|\J|$ holds if and only if $\I \subset \J$. Therefore, it is enough to show that $\I \not\subset \J$.

We assume for the sake of contradiction that $\I \subset \J$. Since $\dim\ev{\mun,\md}=2$ and $\I \subset \J$, we can take  $\bm{m}=\md-\lambda\mun$ where $\lambda \in\Fn{q}\setminus \{0\}$, such that $\supp{m}\subsetneq  \suppa{\md}$. Since $\ev{ \mun, \md} = \ev{ \mun,\bm{m}}$, $d_2(\Co)=\wev{\mun, \bm{m}}$. Then, $\bm{m}\in \bm{M}_2$. Hence, $\md=\min_{\prec}\bm{M}_2\preceq \bm{m}$, but $\w{m}<\w{\md}$. Thus, $\I \not\subset \J$. 
\end{proof}
\end{Lemma}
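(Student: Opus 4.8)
The plan is to first dispose of the easy equality and then concentrate on the strict inequality. The equality $|\J| = \w{\md}$ is immediate, since $\J = \suppa{\md}$ and the Hamming weight of a codeword is by definition the cardinality of its support. For the inequality, I would exploit the identity $d_2(\Co) = |\I \cup \J|$ from \eqref{eq:d2-equi}, which yields $d_2(\Co) \geq |\J|$ at once. This inequality is strict precisely when $\I \not\subset \J$, because $|\I \cup \J| = |\J|$ holds if and only if $\I \subset \J$. Thus the entire problem reduces to showing that $\I \not\subset \J$, i.e. that $\suppa{\mun}$ is not contained in $\suppa{\md}$.

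To prove $\I \not\subset \J$ I would argue by contradiction, assuming $\I \subset \J$, and exploit the minimality of $\md$ in $\bm{M}_2$. The idea is to manufacture a codeword in $\bm{M}_2$ whose support is strictly smaller than that of $\md$. Using that $\dim\ev{\mun, \md} = 2$ (established in the preceding Corollary), I would perform a single elimination step: pick any coordinate $i \in \I$; since $i \in \I \subset \J$, both $\mun$ and $\md$ are nonzero in position $i$, so there is a unique $\lambda \in \Fn{q} \setminus \{0\}$ for which $\bm{m} = \md - \lambda\mun$ vanishes at $i$. Because $\I \subset \J$, the support of $\bm{m}$ is contained in $\suppa{\md}$, while the coordinate $i$ has been eliminated; hence $\supp{m} \subsetneq \suppa{\md}$. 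The two-dimensionality of $\ev{\mun, \md}$ guarantees $\bm{m} \neq \bm{0}$.

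The next step is to check that $\bm{m}$ still lies in $\bm{M}_2$. Since $\lambda \neq 0$, we have $\ev{\mun, \md} = \ev{\mun, \bm{m}}$, so $\wev{\mun, \bm{m}} = \wev{\mun, \md} = d_2(\Co)$, which is exactly the defining condition for membership in $\bm{M}_2$. Minimality then forces $\md = \min_{\prec} \bm{M}_2 \preceq \bm{m}$, and because $\prec$ is degree compatible this gives $\w{\md} \leq \w{m}$. But the strict support inclusion $\supp{m} \subsetneq \suppa{\md}$ yields $\w{m} < \w{\md}$, a contradiction. Therefore $\I \not\subset \J$, and the strict inequality $\w{\md} < d_2(\Co)$ follows.

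I expect the main (though mild) obstacle to be the bookkeeping in the elimination step: one must verify simultaneously that the chosen scalar $\lambda$ is nonzero, that $\bm{m}$ is itself nonzero, and that its support strictly shrinks. All three facts are needed for the contradiction, and each rests on a different hypothesis, namely $i \in \J$, the condition $\dim\ev{\mun,\md} = 2$, and the assumption $\I \subset \J$, respectively. The conceptual heart of the argument, which mirrors the preceding lemma on $\mun$, is that minimality with respect to a degree-compatible order is incompatible with the existence of a support-reducing combination inside a fixed two-dimensional span.
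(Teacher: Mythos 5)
Your proposal is correct and follows essentially the same route as the paper: reduce the claim to showing $\I\not\subset\J$, then under the contrary assumption use $\dim\ev{\mun,\md}=2$ to form $\bm{m}=\md-\lambda\mun$ with $\supp{m}\subsetneq\suppa{\md}$, note $\bm{m}\in\bm{M}_2$, and contradict the minimality of $\md$. Your extra detail about choosing $\lambda$ to kill a specific coordinate $i\in\I$ is just a more explicit version of the same elimination step.
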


\begin{Lemma}\label{lemma-cg-m2}
Let $\Co\subset\F{q}{n}$ be a linear code. If $\bm{m}\in   \bm{M}_2$ and $\w{m}=\wa{\md}$, then $\bm{m}\in\crc{\Co}$. In particular $\md\in\crc{\Co}$.

\begin{proof}
Suppose $\bm{m}\not\in\crc{\Co}$. Then, there exists $\bm{c}\in\Co$ such that $\supp{c}\subsetneq \supp{m}$. Hence 
\begin{equation}
\w{c}<\w{m}=\wa{\md}. \label{lemma-001}
\end{equation}
First, we prove that $\bm{c}\in\ev{\mun}$. Suppose $\dim\ev{\mun,\bm{c}}=2$. Then, $\wev{\mun,\bm{c}}\geq d_2(\Co)$. Moreover
$$\wev{\mun,\bm{c}}=|\I \cup\supp{c}|\leq |\I \cup\supp{m}|=\wev{\mun,\bm{m}}=d_2(\Co).$$
Thus, $\wev{\mun,\bm{c}}=d_2(\Co)$. Hence, $\bm{c}\in\bm{M}_2$. Then, $\md=\min_{\prec}\bm{M}_2\preceq \bm{c}$, contradicting \eqref{lemma-001}. Therefore, $\bm{c}\in\ev{\mun}$. Hence
\begin{align*}
d_2(\Co)=\wev{\mun,\bm{m}}=\wev{\bm{c},\bm{m}}&=|\supp{c}\cup\supp{m}|\\
&=|\supp{m}|=\w{m}=\wa{\md}<d_2(\Co).
\end{align*}
Therefore, $\bm{m}\in\crc{\Co}$.
\end{proof}
\end{Lemma}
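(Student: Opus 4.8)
The plan is to argue by contradiction, in the same spirit as the minimality arguments already used for $\mun$ and $\md$ above. Suppose $\bm{m}\not\in\crc{\Co}$; then by definition there exists a nonzero codeword $\bm{c}\in\Co$ with $\supp{c}\subsetneq\supp{m}$, and since its support is strictly smaller, $\w{c}<\w{m}=\wa{\md}$. The whole argument will then revolve around testing $\bm{c}$ against $\mun$ and invoking the minimality of $\md$ in $\bm{M}_2$ to contradict this weight inequality.

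First I would split according to whether $\bm{c}$ is linearly independent from $\mun$. If $\dim\ev{\mun,\bm{c}}=2$, then $\wev{\mun,\bm{c}}\geq d_2(\Co)$ by definition of the second GHW, while $\supp{c}\subseteq\supp{m}$ yields $\I\cup\supp{c}\subseteq\I\cup\supp{m}$ and hence $\wev{\mun,\bm{c}}\leq\wev{\mun,\bm{m}}=d_2(\Co)$, the last equality because $\bm{m}\in\bm{M}_2$. Squeezing gives $\wev{\mun,\bm{c}}=d_2(\Co)$, so $\bm{c}\in\bm{M}_2$; minimality then forces $\md\preceq\bm{c}$, and the degree compatibility of $\prec$ turns this into $\wa{\md}\leq\w{c}$, contradicting $\w{c}<\wa{\md}$.

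In the complementary case $\bm{c}$ is a nonzero scalar multiple of $\mun$, so $\supp{c}=\I$; then $\I=\supp{c}\subsetneq\supp{m}$ gives $\wev{\mun,\bm{m}}=|\I\cup\supp{m}|=|\supp{m}|=\w{m}=\wa{\md}$, which is strictly below $d_2(\Co)$ by the preceding lemma and contradicts $\bm{m}\in\bm{M}_2$. Either branch is absurd, so $\bm{m}\in\crc{\Co}$, and the final assertion follows by specializing to $\bm{m}=\md$, which lies in $\bm{M}_2$ and obviously satisfies $\w{m}=\wa{\md}$.

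I expect the only delicate point to be the weight squeeze in the independent case: one must check that $\supp{c}\subseteq\supp{m}$ genuinely forces $\wev{\mun,\bm{c}}\leq\wev{\mun,\bm{m}}$, and that it is the degree compatibility of $\prec$ (not merely the abstract order relation $\md\preceq\bm{c}$) that licenses passing to $\wa{\md}\leq\w{c}$. The remaining case analysis is routine.
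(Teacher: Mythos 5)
Your proposal is correct and follows essentially the same argument as the paper: contradiction via a strictly smaller support, the squeeze $\wev{\mun,\bm{c}}=d_2(\Co)$ forcing $\bm{c}\in\bm{M}_2$ against the minimality of $\md$, and the final contradiction $\wev{\mun,\bm{m}}=\wa{\md}<d_2(\Co)$ when $\bm{c}\in\ev{\mun}$. The "delicate points" you flag are indeed the ones the paper relies on, and both go through exactly as you describe.
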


\begin{Corollary}\label{lemma-m1-m2-d2-test-set}
Let $\Co\subset\F{q}{n}$ be a linear code and $M\subset \crc{\Co}$. If $\mun\in M$, and there exists $\bm{m}\in \bm{M}_2\cap M$ such that $\w{m}=\wa{\md}$, then $M$ is a $d_2$-test set. 
\end{Corollary}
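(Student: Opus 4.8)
The plan is to produce the required pair of codewords explicitly, taking $\bm{c}_1=\mun$ and $\bm{c}_2=\bm{m}$, where $\bm{m}\in\bm{M}_2\cap M$ is the codeword supplied by the hypothesis. To conclude that $M$ is a $d_2$-test set I must verify the three defining conditions: that both $\bm{c}_1$ and $\bm{c}_2$ lie in $M$, that $\dim\ev{\bm{c}_1,\bm{c}_2}=2$, and that $\wev{\bm{c}_1,\bm{c}_2}=d_2(\Co)$.

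The membership and weight conditions are immediate from the hypotheses and the definitions. First I would note that $\mun\in M$ is assumed, while $\bm{m}\in\bm{M}_2\cap M\subset M$, so both candidates belong to $M$. Next, since $\bm{m}\in\bm{M}_2$, the very definition of $\bm{M}_2$ gives $\wev{\mun,\bm{m}}=d_2(\Co)$, which settles the weight condition. The hypothesis $\w{m}=\wa{\md}$ is what guarantees, via Lemma~\ref{lemma-cg-m2}, that such an $\bm{m}$ indeed lies in $\crc{\Co}$, so the requirement $\bm{m}\in M\subset\crc{\Co}$ is consistent.

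The only substantive step is showing $\dim\ev{\mun,\bm{m}}=2$, equivalently that $\mun$ and $\bm{m}$ are linearly independent. Here I would argue by contradiction. Since $\bm{0}\notin\bm{M}_2$, the codeword $\bm{m}$ is nonzero; if $\dim\ev{\mun,\bm{m}}=1$ then $\bm{m}$ would be a nonzero scalar multiple of $\mun$, forcing $\suppa{\bm{m}}=\suppa{\mun}$ and hence $\wev{\mun,\bm{m}}=\wa{\mun}$. Combining this with the weight identity from the previous paragraph would yield $\wa{\mun}=d_2(\Co)$, contradicting the strict inequality $\wa{\mun}<d_2(\Co)$ established in Lemma~\ref{lemma-m1-d2}. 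Therefore $\dim\ev{\mun,\bm{m}}=2$.

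With all three conditions verified, the pair $\mun,\bm{m}$ witnesses that $M$ is a $d_2$-test set. I do not expect a genuine obstacle: the work has all been front-loaded into Lemmas~\ref{lemma-m1-d2} and~\ref{lemma-cg-m2}, and this corollary is essentially a bookkeeping assembly of those facts. The one point demanding care is the linear-independence argument, where the strict inequality $\wa{\mun}<d_2(\Co)$ — rather than merely $\leq$ — is exactly what rules out the degenerate one-dimensional case.
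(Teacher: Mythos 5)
Your proof is correct and is exactly the assembly the paper intends for this corollary, which it states without proof: $\mun$ and $\bm{m}$ are the witnesses, $\bm{m}\in\bm{M}_2$ gives the weight identity, and the strict inequality $\wa{\mun}<d_2(\Co)$ from Lemma~\ref{lemma-m1-d2} rules out linear dependence. Your side remark is also apt: since $M\subset\crc{\Co}$ is already assumed, the hypothesis $\w{m}=\wa{\md}$ is not logically needed here but is what Lemma~\ref{lemma-cg-m2} uses to certify $\bm{m}\in\crc{\Co}$ in the applications.
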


\section{A sufficient condition for $M_\G$ being a \texorpdfstring{$d_2$}{TEXT}-test set }\label{s:mg-si-d2}

In this section we prove our first main theorem, which provides  sufficient conditions for the set $M_\G$, defined in \eqref{def-Mg}, to be a $d_2$-test set (see Theorem~\ref{corollary-main}).

According to Corollary~\ref{lemma-m1-m2-d2-test-set}, given a linear code $\Co\subset\F{q}{n}$, in order for a set $M\subset \crc{\Co}$ to be a  $d_2$-test set, it suffices that $\mun\in M$ and that there exists a codeword  $\bm{m}\in \bm{M}_2\cap M$ such that $\w{m}=\wa{\md}$. Accordingly, in this section we derive conditions under which these requirements are satisfied for $M_\G$.

If $\Co\subset\F{2}{n}$, we have the following \cite[Remark 3]{garcia2022free}: 
\begin{equation}
|\I \cap \J |\leq \frac{|\I |}{2}\leq \frac{|\J |}{2}. \label{eq:desi-binary-case}
\end{equation}
These inequalities represent a crucial ingredient to prove that $M_\G$ is a $d_2$-test set in the binary case. However, this result does not hold in general for linear codes $\Co\subset \F{q}{n}$, where $q>2$.

\begin{Ejemplo}
Let $\Co\subset \F{3}{8}$ be the linear code with generator matrix given by
$$G=\begin{pmatrix}
1&1&1&2&1&2&0&0\\
0&0&1&1&1&1&1&1
\end{pmatrix}.$$
The linear code has $3^2=9$ codewords, each of which has Hamming weight $6$. Then, for any degree compatible order we have $|\I|=|\J|=6$. Also $$d_2(\Co)=\wa{\Co}=\wev{(1,1,1,2,1,2,0,0),(0,0,1,1,1,1,1,1)}=8.$$ Then, $|\I\cap\J|=4$. Hence, we have  $4=|\I \cap \J |>\tfrac{|\I |}{2}=3$.
\end{Ejemplo}

In general, we have the following:

\begin{Proposition}\label{lema-intersectionIJ}
Given a linear code $\Co\subset\F{q}{n}$, we have 
\begin{equation}
|\I \cap \J |\leq \frac{q-1}{q}|\I |\leq \frac{q-1}{q}|\J |. \label{eq:total-ineq-int}
\end{equation}
\begin{proof}
The last inequality of \eqref{eq:total-ineq-int} follows from \eqref{m1Mm2}.

Now, for each $ i\in[q-1]$, we define $E_i=\{j\in \I \cap \J:  j\notin \suppa{\mun-\alpha^i \md} \}$.
Then
\begin{equation}
\I \cap \J=\bigsqcup_{i=1}^{q-1} E_i. \label{eq:car-inter}
\end{equation}

Let $r\in[q-1]$ be such that $|E_r|=\max\{|E_i|:i\in[q-1]\}$. From \eqref{eq:car-inter}, we can observe that $|E_r|\geq \pecp{|\I \cap \J|}{q-1}$. If the equality holds, then $\pecp{|\I \cap \J|}{q-1}=\tfrac{|\I \cap \J|}{q-1}$.  As a consequence, if $(q-1)$ does not divide $ |\I \cap \J|$, then $|E_r|\geq \pecp{|\I \cap \J|}{q-1}+1$.

On the other hand, we know that
\begin{align}
\wa{\mun-\alpha^i \md}&=  |\I \setminus \J|+|\J\setminus\I |+|\{j\in \I \cap \J:j\in \suppa{\mun-\alpha^i \md}\}|  \nonumber\\
&=|\I |-|\I \cap \J|+|\J|-|\I \cap \J|+|\I \cap \J|-|E_i|\nonumber\\
&=|\I |+|\J|-|\I \cap \J|-|E_i|.   \label{eq:we-diferencia}
\end{align}

Moreover, from the definition of $\md$, for each $i\in [q-1]$, $\md\prec\mun-\alpha^i \md$. Then, $\wa{\md}\leq \wa{\mun-\alpha^i \md}$. Hence, from \eqref{eq:we-diferencia} it follows that $|\J|\leq |\I |+|\J|-|\I \cap \J|-|E_i|$. Then, $|\I \cap \J|\leq |\I |-|E_i|$ for all $i\in[q-1]$. In particular
\begin{equation}
|\I \cap \J|\leq |\I |-|E_r|. \label{eq:first-in-int}
\end{equation}

If $|E_r|= \pecp{|\I \cap \J|}{q-1}=\tfrac{|\I \cap \J|}{q-1}$, then $
|\I \cap \J|\leq |\I |-  \tfrac{|\I \cap \J|}{q-1}$. Hence, $|\I \cap \J|\leq \tfrac{q-1}{q}|\I |$.

Now suppose that $|E_r|\geq \pecp{|\I \cap \J|}{q-1}+1$. Since $\pecp{|\I \cap \J|}{q-1}> \tfrac{|\I \cap \J|}{q-1}-1$, it follows that $|E_r|>\tfrac{|\I \cap \J|}{q-1}$. Now, using \eqref{eq:first-in-int}, we have $
|\I \cap \J|\leq |\I |-|E_r| < |\I |-\tfrac{|\I \cap \J|}{q-1}$. Then
\begin{equation*}
|\I \cap \J|< \frac{q-1}{q}|\I |. \qedhere    \label{eq:second-eq-ine} 
\end{equation*}
\end{proof}
\end{Proposition}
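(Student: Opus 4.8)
The plan is to prove the two inequalities separately. The second inequality $\frac{q-1}{q}|\I| \le \frac{q-1}{q}|\J|$ is immediate from $|\I| \le |\J|$, which is \eqref{m1Mm2}, so the entire content lies in establishing $|\I \cap \J| \le \frac{q-1}{q}|\I|$.

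For this main inequality, the central observation is that the common support $\I \cap \J$ splits according to which scalar multiple of $\md$ cancels $\mun$ at each coordinate. Indeed, for $j \in \I\cap\J$ both $\mun$ and $\md$ have a nonzero entry at $j$, so there is exactly one $i \in [q-1]$ with $(\mun)_j = \alpha^i(\md)_j$, i.e.\ exactly one $i$ for which $\mun - \alpha^i\md$ vanishes at $j$. Setting $E_i = \{j \in \I\cap\J : j \notin \suppa{\mun - \alpha^i\md}\}$, this yields the disjoint union $\I\cap\J = \bigsqcup_{i=1}^{q-1} E_i$. First I would record this partition and deduce by averaging (pigeonhole) that the densest block $E_r$ satisfies $|E_r| \ge \pecp{|\I\cap\J|}{q-1}$, and hence $|E_r| \ge \tfrac{|\I\cap\J|}{q-1}$.

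Next I would bring in the minimality of $\md$. Since $\ev{\mun,\md} = \ev{\mun, \mun - \alpha^i\md}$ for every $i$, each $\mun - \alpha^i\md$ lies in $\bm{M}_2$, so $\md = \min_\prec \bm{M}_2 \preceq \mun - \alpha^i\md$ and therefore $|\J| = \wa{\md} \le \wa{\mun - \alpha^i\md}$. A routine support count, splitting $\suppa{\mun - \alpha^i\md}$ into the contributions from $\I\setminus\J$, from $\J\setminus\I$, and from the surviving positions $(\I\cap\J)\setminus E_i$, gives the identity $\wa{\mun - \alpha^i\md} = |\I| + |\J| - |\I\cap\J| - |E_i|$. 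Combined with $|\J| \le \wa{\mun - \alpha^i\md}$, this forces $|\I\cap\J| \le |\I| - |E_i|$ for every $i \in [q-1]$.

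Finally I would specialize to $i = r$ and insert the pigeonhole bound, obtaining $|\I\cap\J| \le |\I| - |E_r| \le |\I| - \tfrac{|\I\cap\J|}{q-1}$, which rearranges to $\tfrac{q}{q-1}|\I\cap\J| \le |\I|$, i.e.\ the desired bound. The step I expect to be the crux is realizing that the order-minimality of $\md$ constrains all $q-1$ weights $\wa{\mun - \alpha^i\md}$ simultaneously, so that the single densest cancellation block $E_r$ can be played against the support-count identity; the remaining arithmetic, including the care needed with $\pecp{|\I\cap\J|}{q-1}$ when $(q-1)$ does not divide $|\I\cap\J|$ (which sharpens the inequality to a strict one), is routine.
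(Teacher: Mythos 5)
Your proposal is correct and follows essentially the same route as the paper: the same partition $\I\cap\J=\bigsqcup_{i=1}^{q-1}E_i$, the same support-count identity $\wa{\mun-\alpha^i\md}=|\I|+|\J|-|\I\cap\J|-|E_i|$, and the same use of the minimality of $\md$ in $\bm{M}_2$ to force $|\I\cap\J|\leq|\I|-|E_i|$. Your direct averaging bound $|E_r|\geq\tfrac{|\I\cap\J|}{q-1}$ is a slightly cleaner way to finish than the paper's case analysis on the floor $\pecp{|\I\cap\J|}{q-1}$, but it is the same idea.
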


The following lemma is a technical result that we need:

\begin{Lemma}\label{obs-01}
Let $\bm{a},\bm{b},\bm{v}\in \F{q}{n}\setminus\{\bm{0}\}$ be three different words where $\supp{a}\cap\supp{b}=\emptyset$ and $\w{v}\leq\w{a}$. Then, $\dim\ev{\bm{a}-\bm{b},\bm{a}-\bm{v}}=2$.
\begin{proof}
Suppose $\bm{a}-\bm{b}\in\ev{\bm{a}-\bm{v}}$. 
Since $\bm{b}\neq\bm{v}$, there exists $\lambda\in\Fn{q}\setminus\{0,1\}$ such that $\bm{a}-\bm{b}=\lambda(\bm{a}-\bm{v})$. Hence, $(1-\lambda)\bm{a}=\bm{b}-\lambda\bm{v}$. Then,
$\supp{a}=\suppa{\bm{b}-\lambda\bm{v}}\subset \supp{b}\cup\supp{v}$.

Since $\supp{a}\cap\supp{b}=\emptyset$, $\supp{a}\subset \supp{v}$. Since $\w{v}\leq \w{a}$,  $\supp{a}=\supp{v}$. It follows that $\suppa{\bm{a}-\bm{v}}\subset \supp{a}$. Therefore
$$\supp{a}\cup\supp{b}=\suppa{\bm{a}-\bm{b}}=\supp{\bm{a}-\bm{v}}\subset \supp{a}.$$
This is a contradiction because $\supp{a}\cap\supp{b}=\emptyset$. Thus, $\dim\ev{\bm{a}-\bm{b},\bm{a}-\bm{v}}=2$.
\end{proof}
\end{Lemma}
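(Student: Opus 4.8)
The plan is to prove linear independence by contradiction, assuming $\bm{a}-\bm{b}$ and $\bm{a}-\bm{v}$ are linearly dependent and deriving an impossibility from the support hypotheses. First I would record that both candidate vectors are nonzero: since $\bm{a}$ and $\bm{b}$ are nonzero with $\supp{a}\cap\supp{b}=\emptyset$ they cannot be equal, so $\bm{a}-\bm{b}\neq\bm{0}$; and $\bm{a}-\bm{v}\neq\bm{0}$ because the three words are distinct. Consequently, a linear dependence must take the form $\bm{a}-\bm{b}=\lambda(\bm{a}-\bm{v})$ for some $\lambda\in\Fn{q}\setminus\{0\}$.

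Next I would dispose of the scalar $\lambda$. If $\lambda=1$, then $\bm{a}-\bm{b}=\bm{a}-\bm{v}$ forces $\bm{b}=\bm{v}$, contradicting that the words are different; hence $\lambda\neq 1$. Rewriting the dependence gives $(1-\lambda)\bm{a}=\bm{b}-\lambda\bm{v}$, and since $1-\lambda\neq 0$ this exhibits $\bm{a}$ as a nonzero scalar multiple of $\bm{b}-\lambda\bm{v}$, whence $\supp{a}=\suppa{\bm{b}-\lambda\bm{v}}\subset\supp{b}\cup\supp{v}$.

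The crux is to upgrade this containment to an equality on the $\bm{v}$ side. Because $\supp{a}\cap\supp{b}=\emptyset$, the containment above collapses to $\supp{a}\subset\supp{v}$. This is precisely where the weight hypothesis enters: $\w{v}\leq\w{a}$ means $|\supp{v}|\leq|\supp{a}|$, so the containment $\supp{a}\subset\supp{v}$ must actually be the equality $\supp{a}=\supp{v}$. I expect this to be the pivotal step — the inequality $\w{v}\leq\w{a}$ is exactly what forbids $\supp{v}$ from being strictly larger than $\supp{a}$, and without it the argument would not close.

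Finally I would combine the support identities to reach the contradiction. Scaling by $\lambda\neq 0$ does not change supports, so $\suppa{\bm{a}-\bm{b}}=\suppa{\bm{a}-\bm{v}}$. On one hand, disjointness prevents any cancellation, giving $\suppa{\bm{a}-\bm{b}}=\supp{a}\cup\supp{b}$; on the other hand, $\supp{a}=\supp{v}$ forces $\suppa{\bm{a}-\bm{v}}\subset\supp{a}$. Putting these together yields $\supp{a}\cup\supp{b}\subset\supp{a}$, i.e. $\supp{b}\subset\supp{a}$, which together with $\supp{a}\cap\supp{b}=\emptyset$ and $\bm{b}\neq\bm{0}$ is impossible. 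This contradiction establishes $\dim\ev{\bm{a}-\bm{b},\bm{a}-\bm{v}}=2$.
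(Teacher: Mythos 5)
Your proposal is correct and follows essentially the same argument as the paper: assume a dependence $\bm{a}-\bm{b}=\lambda(\bm{a}-\bm{v})$ with $\lambda\neq 0,1$, deduce $\supp{a}\subset\supp{b}\cup\supp{v}$, use disjointness and the weight bound to get $\supp{a}=\supp{v}$, and derive the contradiction $\supp{a}\cup\supp{b}\subset\supp{a}$. Your version is slightly more explicit about the nonvanishing of the two vectors and the role of $\bm{b}\neq\bm{0}$ in the final contradiction, but the route is identical.
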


In the binary case, one can find binomials $f,g\in\G$, such that $\bm{c}_f=\mun$ and $\bm{c}_g=\md$ \cite[Section 4]{garcia2022free}. In the following propositions, we generalize this result for linear codes in $\F{q}{n}$, adding a condition on the size of $\I \cap \J$. With such condition, the proof of these propositions can be adapted from the corresponding results in the binary case.

\begin{Proposition}\label{prop-m1}
Let $\Co\subset\F{q}{n}$ be a linear code where $|\I \cap \J|\leq \frac{|\J|+1}{2}$, and let $\G$ be the reduced Gr\"obner basis of $\Ic$. Then, there exists $f\in \G$ such that $\bm{c}_f=\bm{m}_1$.
\begin{proof}
We write $\mun$ as $\mun=\bm{a}-\bm{b}$, where $\supp{a}\cap\supp{b}=\emptyset$, $\w{b}\leq\w{a}\leq \w{b}+1$, and $\xp{b}\prec\xp{a}$.

Let $f=\xp{a}-\xp{b}\in\Ic$. Since $\xp{b}\prec\xp{a}$, we have $\xp{a}\in\inicC$. Then, there exists $h=\xp{u}-\xp{v}\in\GRx$ where $\xp{v}\prec\xp{u}$ and $\xp{u}\mid \xp{a}$. We show that $\bm{u}=\bm{a}$ and $\bm{v}=\bm{b}$.

Suppose $\bm{u}\neq \bm{a}$. Then, $\w{u}\leq \w{a}-1$, so
$\w{v}\leq\w{u}\leq \w{a}-1\leq   \w{b}$.
It follows that $\wa{\bm{u}-\bm{v}}\leq \w{u}+\w{v}\leq \w{a}-1+\w{b}=\w{\mun}-1<\w{\mun}$. Then, $\dim\ev{\mun,\bm{u}-\bm{v}}=2$. So, $\wev{\mun,\bm{u}-\bm{v}}\geq d_2$. Moreover, since $\wa{\bm{u}-\bm{v}}<\w{\mun}$, we have $\bm{u}-\bm{v}\prec\mun\prec\md =\min_{\prec}\bm{M}_2$. This implies that $\bm{u}-\bm{v}\notin \bm{M}_2$. Hence, $\wev{\mun,\bm{u}-\bm{v}}>d_2(\Co)$. As a result 
\begin{align*}
|\I \cup \J|=  d_2(\Co)&\leq \wev{\mun,\bm{u}-\bm{v}}-1\\
&=|\supp{\mun}\cup \suppa{\bm{u}-\bm{v}}|-1\nonumber\\
&=|\supp{a}\cup \supp{b}\cup \supp{u}\cup \supp{v}|-1\nonumber \quad \text{(by Lemma~\ref{soportes-rem})}\\
&=|\supp{a}\cup \supp{b}\cup \supp{v}|-1 \quad (\text{since }\supp{u}\subset \supp{a})\nonumber\\
&=|\I \cup \supp{v}|-1\\ 
&\leq |\I |+\w{v}-1 \\
&\leq |\I |+\w{u}-1\qquad (\text{since $\xp{v}\prec \xp{u}$}).
\end{align*}    
It then follows that
$ |\I |+|\J|-|\I \cap \J|=|\I \cup \J|\leq |\I |+\w{u}-1$. Hence
\begin{equation}
|\J|-|\I \cap \J|\leq \w{u}-1.  \label{otra-1}
\end{equation}
On the other hand, since $|\I \cap \J|\leq  \frac{|\J|+1}{2}$, we also have
\begin{align}
|\J|-|\I \cap \J|\geq \frac{|\J|-1}{2}\geq \frac{|\I |-1}{2}&=\frac{\w{a}+\w{b}-1}{2}\nonumber\\
&\geq \frac{\w{a}+\w{a}-1-1}{2}=\w{a}-1. \label{otra-2}
\end{align}

Combining \eqref{otra-1} and  \eqref{otra-2} we obtain:
$\w{a}-1\leq |\J|-|\I \cap \J|\leq \w{u}-1$. Then, $\w{a}\leq \w{u}$. However, by assumption $\w{u}\leq \w{a}-1$. Hence,  $\bm{u}=\bm{a}$.

\medskip

Suppose now that $\bm{v}\neq \bm{b}$. Since $\xp{a}-\xp{b},\xp{a}-\xp{v}\in \Ic$,  $\xp{b}-\xp{v}\in \Ic$. 
Given that $\xp{v}\not\in\inicC$, we have $\xp{v}\prec \xp{b}\prec\xp{a}$. Then, by Lemma~\ref{obs-01},  $\dim\ev{\bm{a}-\bm{b},\bm{a}-\bm{v}}=2$, or equivalently $\dim\ev{\bm{a}-\bm{b},\bm{v}-\bm{b}}=2$. 

Since $\xp{v}\prec\xp{a}$, $\xpp{(-b)}{v}\prec\xpp{(-b)}{a}$. Moreover, $\xp{(v-b)}\preceq \xpp{(-b)}{v}$ and $\xpp{(-b)}{a}=\xp{\mun}$. Then, $\xp{(v-b)}\prec\xp{\mun}$.  Consequently, $\bm{v}-\bm{b}\prec\mun\prec\md=\min_{\prec}\bm{M}_2$.  Then, $\bm{v}-\bm{b}\notin \bm{M}_2$. Therefore, $\wev{\mun,\bm{v}-\bm{b}}>d_2(\Co)$. It follows that
\begin{align*}
|\I \cup \J|=d_2(\Co)&\leq \wev{\mun,\bm{v}-\bm{b}}-1\\
&=|\supp{\mun}\cup \suppa{\bm{v}-\bm{b}}|-1\nonumber\\
&\leq |\supp{a}\cup \supp{b}\cup \supp{v}\cup \supp{b}|-1\nonumber \\
&=|\supp{a}\cup \supp{b}\cup \supp{v}|-1 \nonumber\\
&=|\I \cup \supp{v}|-1\\  
&\leq |\I |+\w{v}-1 \\
&\leq |\I |+\w{b}-1 \qquad (\text{since $\xp{v}\prec \xp{b}$}).
\end{align*}
Then, $ |\I |+|\J|-|\I \cap \J|=|\I \cup \J|\leq |\I |+\w{b}-1$. It follows that
\begin{equation}
|\J|-|\I \cap \J|\leq \w{b}-1.  \label{otra-3}
\end{equation}
Additionally, since $|\I \cap \J|\leq  \frac{|\J|+1}{2}$, we have
\begin{align}
|\J|-|\I \cap \J|\geq \frac{|\J|-1}{2}\geq \frac{|\I |-1}{2}&=\frac{\w{a}+\w{b}-1}{2}\nonumber\\
&\geq \frac{\w{b}+\w{b}-1}{2}=\w{b}-\frac{1}{2}. \label{otra-4}
\end{align}
By combining inequalities \eqref{otra-3} and \eqref{otra-4}, we obtain $\w{b}-\frac{1}{2}\leq |\J|-|\I\cap \J|\leq \w{b}-1$, which is a contradiction. Hence,  $\bm{v}=\bm{b}$.

It follows that $f=h \in \GRx$, where $\bm{c}_f=\mun$.
\end{proof}
\end{Proposition}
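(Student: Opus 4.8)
The plan is to realize $\mun$ as the codeword attached to a single element of the reduced Gr\"obner basis $\G$. First I would fix a balanced decomposition $\mun=\bm{a}-\bm{b}$ with $\supp{a}\cap\supp{b}=\emptyset$ and $\w{b}\le\w{a}\le\w{b}+1$, arranged so that $\xp{b}\prec\xp{a}$; such a splitting exists because the nonzero coordinates of $\mun$ can be distributed between $\bm{a}$ and $\bm{b}$ as evenly as we wish, and after splitting we may rename the two halves so that the larger monomial is $\xp{a}$. Setting $f=\xp{a}-\xp{b}\in\Ic$, its initial monomial is $\xp{a}\in\inicC$, so some $h\in\G$ has initial monomial dividing $\xp{a}$; by Proposition~\ref{prop-division-binomios-grobner} we have $h\in\GRx$ and may write $h=\xp{u}-\xp{v}$ with $\xp{v}\prec\xp{u}$, $\xp{u}\mid\xp{a}$ and $\w{v}\le\w{u}\le\w{v}+1$. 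The whole argument then reduces to proving $\bm{u}=\bm{a}$ and $\bm{v}=\bm{b}$, for then $h=f\in\G$ and $\bm{c}_f=\bm{a}-\bm{b}=\mun$.

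For the first equality I would argue by contradiction: if $\xp{u}$ were a proper divisor of $\xp{a}$ then $\w{u}\le\w{a}-1$, so the codeword $\bm{u}-\bm{v}$ has weight at most $\w{u}+\w{v}\le\w{a}-1+\w{b}=\w{\mun}-1$. Having strictly smaller weight than $\mun$, it cannot lie in $\ev{\mun}$, so $\dim\ev{\mun,\bm{u}-\bm{v}}=2$ and $\wev{\mun,\bm{u}-\bm{v}}\ge d_2(\Co)$; moreover $\bm{u}-\bm{v}\prec\mun\prec\md=\min_{\prec}\bm{M}_2$, so this codeword is not in $\bm{M}_2$ and the inequality is strict. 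Expanding the support via Lemma~\ref{soportes-rem}, using $\supp{u}\subset\supp{a}\subset\I$ and $\w{v}\le\w{u}$, this strict inequality collapses to $|\J|-|\I\cap\J|\le\w{u}-1$. On the other side the hypothesis $|\I\cap\J|\le\frac{|\J|+1}{2}$ together with $|\I|\le|\J|$, $|\I|=\w{a}+\w{b}$ and $\w{b}\ge\w{a}-1$ forces $|\J|-|\I\cap\J|\ge\frac{|\I|-1}{2}\ge\w{a}-1$, whence $\w{a}\le\w{u}$, contradicting $\w{u}\le\w{a}-1$. Hence $\bm{u}=\bm{a}$.

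Knowing $\xp{u}=\xp{a}$, the second equality follows in the same spirit. From $\xp{a}-\xp{b},\xp{a}-\xp{v}\in\Ic$ we obtain the codeword $\bm{v}-\bm{b}$, and reducedness of $\G$ gives $\xp{v}\notin\inicC$, hence $\xp{v}\prec\xp{b}$ and $\w{v}\le\w{b}\le\w{a}=\w{u}$; Lemma~\ref{obs-01} then yields $\dim\ev{\mun,\bm{v}-\bm{b}}=2$. Multiplying $\xp{v}\prec\xp{a}$ by $\xp{(-b)}$ shows $\xp{(v-b)}\prec\xp{\mun}$, so again $\bm{v}-\bm{b}\prec\mun\prec\md$, the codeword misses $\bm{M}_2$, and $\wev{\mun,\bm{v}-\bm{b}}>d_2(\Co)$. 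Expanding the support exactly as before yields $|\J|-|\I\cap\J|\le\w{b}-1$, while the hypothesis now gives $|\J|-|\I\cap\J|\ge\frac{|\I|-1}{2}\ge\w{b}-\frac{1}{2}$, a contradiction. Therefore $\bm{v}=\bm{b}$, and so $f=h\in\GRx$ with $\bm{c}_f=\mun$.

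The main obstacle, and the precise place where the hypothesis $|\I\cap\J|\le\frac{|\J|+1}{2}$ is indispensable, is converting the two minimality statements into usable numerical contradictions: the minimality of $\mun$ in $\bm{M}_1$ and of $\md$ in $\bm{M}_2$ only tell us that the auxiliary codewords strictly overshoot $d_2(\Co)$, and turning ``strictly overshoots'' into a sharp inequality depends on the exact support bookkeeping of Lemma~\ref{soportes-rem}. The arithmetic hypothesis is exactly the control on $|\I\cap\J|$ needed so that the lower bound $|\J|-|\I\cap\J|\ge\frac{|\I|-1}{2}$ beats the upper bounds $\w{u}-1$ and $\w{b}-1$; without it, as the ternary example preceding the proposition illustrates, this balance fails and the argument cannot be closed.
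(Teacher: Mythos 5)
Your proposal is correct and follows essentially the same route as the paper's proof: the same balanced decomposition $\mun=\bm{a}-\bm{b}$, the same two-stage contradiction showing $\bm{u}=\bm{a}$ and then $\bm{v}=\bm{b}$ via the minimality of $\mun$ and $\md$, Lemma~\ref{soportes-rem}, Lemma~\ref{obs-01}, and the identical arithmetic use of the hypothesis $|\I\cap\J|\leq\frac{|\J|+1}{2}$. The only cosmetic difference is that you cite Proposition~\ref{prop-division-binomios-grobner} to justify the shape of $h$, which the paper leaves implicit.
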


For Proposition~\ref{prop-m2} and Theorem~\ref{corollary-main}, we consider degree compatible orders that satisfy the following condition: 
For $\bm{a},\bm{b}\in\F{q}{n}$ where $\supp{a}\cap\supp{b}=\emptyset$,
\begin{equation}
\text{ if $\xp{b}\prec \xp{a}$, then $\xp{(-b)}\prec \xp{(-a)}$.}\label{condicion-menos-orden}
\end{equation}

\begin{Remark} The \textit{lexicographic order} $\prec_{\text{lex}}$ and the \textit{reverse lexicographic order} $\prec_{\text{rev}}$, as defined in \cite{herzog2018binomialx}, are examples of orders that satisfy the condition in \eqref{condicion-menos-orden}. Moreover,  when  
the field $\Fn{q}$ has characteristic two, i.e., $q=2^s$, the condition in \eqref{condicion-menos-orden} is satisfied for every order.
\end{Remark}

\begin{Proposition}\label{prop-m2}
Let $\Co\subset\F{q}{n}$ be a linear code where $|\I \cap \J|\leq \tfrac{|\J|+1}{2}$, and let $\G$ be the reduced Gr\"obner basis of $\Ic$. Then, there exists  $g\in \G$ such that $\wev{\bm{m}_1,\bm{c}_g}=d_2(\Co)$ and $\wa{\bm{c}_g}=\wa{\md}$. Moreover, if $q=2^s$, there exists  $g\in \G$ such that  $\bm{c}_g=\md$.
\begin{proof}
Since $2|\I \cap \J|\leq |\J|+1$, it follows that $|\I \cap \J|\leq |\J|-|\I \cap \J|+1=|\J\setminus \I |+1$. Now we can write $\md$ as $\md=\bm{a}+\bm{b}_1-\bm{b}_2$ where   $\supp{a}=\I \cap \J$, $\suppa{\bm{b}_1}\sqcup \suppa{\bm{b}_2}=\J\setminus\I $, $\suppa{\bm{a}+\bm{b}_1}\cap \suppa{\bm{b}_2}=\emptyset$, and $\wa{\bm{b}_2}\leq \wa{\bm{a}+\bm{b}_1}\leq \wa{\bm{b}_2}+1$. Notice that $\supp{a}\cap \suppa{\bm{b}_1}=\emptyset$.

\medskip

\noindent \textbf{Case 1}. $\xp{(a+b_1)}\succ \xp{b_2}$.

Let $f=\xp{(a+b_1)}-\xp{b_2}\in\Ic$. Then, $\xp{(a+b_1)}\in\inicC$, so there exists $g=\xp{u}-\xp{v}\in\GRx$ such that $\xp{v}\prec\xp{u}$ and $\xp{u}\mid\xp{(a+b_1)}$.

We begin by showing that $\dim \ev{\bm{u}-\bm{v},\mun}=2$.
\begin{itemize}
\item   Suppose $\bm{u}-\bm{v} \in\ev{\mun}$. We show that this assumption implies $\supp{u}\cap\suppa{\bm{b}_1}=\emptyset$.  Indeed, if $i\in \supp{u}$,  since $\xp{u}-\xp{v}\in\G$,  $i\in\supp{u}\cup\supp{v}=\suppa{\bm{u}-\bm{v}}=\suppa{\mun}=\I $. On the other hand, since $\suppa{\bm{b}_1}\subset \J\setminus\I $, $i\notin \suppa{\bm{b}_1}$. Therefore, $\supp{u}\cap\suppa{\bm{b}_1}=\emptyset$.

Since $\xp{u}\mid \xpp{a}{b_1}$, it follows that $\xp{u}\mid \xp{a}$. Thus, $\xpp{(a-u)}{u}=\xp{a}$.

On the other hand, since $\bm{u}-\bm{v} \in\ev{\mun}$, we have  $\ev{\mun,\md}=\ev{\mun,\md-(\bm{u} - \bm{v})}$. This implies that $\md-(\bm{u} - \bm{v})\in\bm{M}_2$. Since $\md=\min_{\prec}\bm{M}_2$, it follows that
\begin{equation}
\md\prec\md-(\bm{u} - \bm{v}).
\label{eq:des-p2aa}
\end{equation}

On the other hand, from $\xp{v}\prec\xp{u}$ we obtain
$$\xp{(a-u+v)}\preceq \xpp{(a-u)}{v}\prec\xpp{(a-u)}{u}=\xp{a}.$$

It follows that 
$$\xp{(a+\bm{b}_1-\bm{b}_2-(u-v))}\preceq \xpp{(\bm{b}_1-\bm{b}_2)}{(a-u+v)}\prec\xpp{(\bm{b}_1-\bm{b}_2)}{a}=\xp{\md}.$$
Therefore, $\md-(\bm{u}-\bm{v})=\bm{a}+\bm{b}_1-\bm{b}_2-(\bm{u}-\bm{v})\prec\md$, which contradicts \eqref{eq:des-p2aa}. Thus, we conclude  $\dev{\mun,\bm{u}-\bm{v}}=2$. Hence, $\wev{\mun,\bm{u}-\bm{v}}\geq d_2(\Co)$.
\end{itemize}

Now we observe that
\begin{align}
\wev{\mun,\bm{u}-\bm{v}}&=| \suppa{\mun}\cup \suppa{\bm{u}-\bm{v}}| \nonumber\\
&=|\I \cup\supp{u}\cup \supp{v}| \nonumber\\
&\leq  |\I \cup \supp{u}|+\w{v} \nonumber\\
&\leq |\I \cup \suppa{\bm{b}_1}|+\w{v}\qquad (\text{since $\I \cup\supp{u}\subset \I \cup\suppa{\bm{b}_1}$}) \nonumber\\
&= |\I |+\wa{\bm{b}_1}+\w{v}.\label{prop-oc01}
\end{align}

Now we prove that $\bm{u}= \bm{a}+\bm{b}_1$.
\begin{itemize}
\item Assume $\bm{u}\neq \bm{a}+\bm{b}_1$. Then, $\w{u}< \wa{\bm{a}+\bm{b}_1}$. Thus
\begin{align}
\w{v}\leq \w{u} \leq \wa{\bm{a}+\bm{b}_1}-1\leq \wa{\bm{b}_2}. \label{otra:pro-02}
\end{align}
Combining \eqref{prop-oc01} and \eqref{otra:pro-02} we obtain
\begin{align*}
\wev{\mun,\bm{u}-\bm{v}}\leq |\I |+\wa{\bm{b}_1}+\w{v}
\leq |\I |+\wa{\bm{b}_1}+ \wa{\bm{b}_2}=d_2(\Co).
\end{align*}
Hence, $\wev{\mun,\bm{u}-\bm{v}}=d_2(\Co)$. Then,  $\bm{u}-\bm{v}\in\bm{M}_2$. Since $\md=\min_{\prec}\bm{M}_2$, $\md\preceq \bm{u}-\bm{v}$. Consequently  $\wa{\md}\leq \wa{\bm{u}-\bm{v}}$. However
\begin{align*}
\wa{\bm{u}-\bm{v}}&\leq \w{u}+\w{v}<\wa{\bm{a}+\bm{b}_1}+\w{v}\leq \wa{\bm{a}+\bm{b}_1}+\wa{\bm{b}_2}=\w{\md}. 
\end{align*}
This is a contradiction. Therefore, $\bm{u}= \bm{a}+\bm{b}_1$.
\end{itemize}

Now we prove that $g=\xp{(a+b_1)}-\xp{v}$ satisfies the required condition.
\begin{itemize}
\item If $\bm{v}=\bm{b}_2$, then $g=\xp{(a+b_1)}-\xp{b_2}\in\GRx$ and $\wev{\mun, \bm{c}_g}=\wev{\mun, \md}=d_2(\Co)$.

    \item Assume $\bm{v}\neq\bm{b}_2$. Since both $g=\xp{(a+b_1)}-\xp{v}$ and $f=\xp{(a+b_1)}-\xp{b_2}$ belong to $\Ic$, it follows that $\xp{b_2}-\xp{v}\in \Ic$. Since $\xp{v}\not\in\inicC$, $\xp{v}\prec \xp{b_2}$. Hence, $\w{v}\leq \wa{\bm{b}_2}$.  From \eqref{prop-oc01}, it follows that
\begin{align*}
\wev{\mun, \bm{a}+\bm{b}_1-\bm{v}}&=\wev{\mun, \bm{u}-\bm{v}}\\&\leq |\I |+\wa{\bm{b}_1}+\w{v}\\
&\leq |\I |+\wa{\bm{b}_1}+\wa{\bm{b}_2}=|\I \cup \J|=d_2(\Co).
\end{align*}
Then, $d_2(\Co)=\wev{\mun, \bm{a}+\bm{b}_1-\bm{v}}=\wev{\mun,\bm{c}_g}$. Furthermore
\begin{align*}
\wa{\bm{c}_g}=\wa{\bm{a}+\bm{b}_1-\bm{v}}\leq \w{a}+\wa{\bm{b}_1}+\w{v}\leq \w{a}+\wa{\bm{b}_1}+\wa{\bm{b}_2}=\wa{\md}. 
\end{align*}
On the other hand, since $\bm{c}_g\in \bm{M}_2$ and $\md=\min_{\prec}\bm{M}_2$, we have  $\md\preceq \bm{c}_g$, so $\wa{\md}\leq \wa{\bm{c}_g}$. Hence, $\wa{\bm{c}_g}=\wa{\md}$.
\end{itemize}

In the case $q=2^s$, one always have $\bm{v}=\bm{b}_2$. Indeed, assume $\bm{v}\neq \bm{b}_2$. Then, $\xp{v}\prec \xp{b_2}$, i.e. $\xp{(-v)}\prec \xp{(-b_2)}$. We also have $\md\preceq \bm{c}_g$. Then 
\begin{align*}
\xp{\md}=\xpp{(a+b_1)}{(-b_2)}\preceq \xp{(\bm{a}+\bm{b}_1-\bm{v})}\preceq \xpp{(\bm{a}+\bm{b}_1)}{(-\bm{v})}.
\end{align*}
It follows that $\xp{(-b_2)}\preceq \xp{(-v)}$, a contradiction. Therefore, when $q=2^s$, we have $\bm{c}_g=\md$.

\medskip

\textbf{Case 2}. $\xp{b_2}\succ \xp{(a+b_1)}$.

Since $\wa{\bm{b}_2}\leq \wa{\bm{a}+\bm{b}_1}$, it follows that $\wa{\bm{b}_2}=\wa{\bm{a}+\bm{b}_1}$. Hence, $\wa{\md}=2\wa{\bm{b}_2}$.

We recall that $\suppa{\bm{b}_2}\cap\suppa{\bm{a}+\bm{b}_1}=\emptyset$, and we are using orders that satisfy the condition in \eqref{condicion-menos-orden}. Then,  $\xp{(-b_2)}\succ \xp{(-a-b_1)}$. 

Since $f=\xp{(-b_2)}-\xp{(-a-b_1)}\in \Ic$,  $\xp{(-b_2)}\in\inicC$. Consequently, there exists $g=\xp{(-u)}-\xp{(-v)}\in\GRx$ such that $\xp{(-v)}\prec\xp{(-u)}$ and $\xp{(-u)}\mid \xp{(-b_2)}$. Hence
\begin{equation}
\w{v}\leq \w{u}\leq \wa{\bm{b}_2}.\label{eq:aux-ineq1}
\end{equation}
It follows that
\begin{align}
\wa{\bm{c}_g}=\wa{\bm{v}-\bm{u}}\leq \w{v}+\w{u}\leq \wa{\bm{b}_2}+\wa{\bm{b}_2}=\wa{\md}. \label{eq:or-new-dq}
\end{align}

Consider now $\bm{c}=\md+\bm{u}-\bm{v}=\bm{a}+\bm{b}_1-\bm{b}_2+\bm{u}-\bm{v}=\bm{a}+\bm{b}_1-\bm{v}+(\bm{u}-\bm{b}_2)$. Then
\begin{align}
\xp{c}=\xp{(a+\bm{b}_1-v+u-\bm{b}_2)}&\preceq\xp{a}\xp{b_1}\xp{(-v)}\xp{(u-\bm{b}_2)}\nonumber\\
&\prec\xp{a}\xp{b_1}\xp{(-u)}\xp{(u-\bm{b}_2)}\nonumber\\
&=\xp{a}\xp{b_1}\xp{(-u)}\frac{\xp{(-b_2)}}{\xp{(-u)}} \qquad (\text{since $\xp{(-u)}\mid\xp{(-b_2)}$})\nonumber\\
&=\xp{a}\xp{b_1}\xp{(-b_2)}=\xp{(a+\bm{b}_1-\bm{b}_2)}=\xp{\md}.\label{eq:c2-1a}
\end{align}

On the other hand
\begin{align}
\wev{\mun,\bm{c}}&=|\suppa{\mun}\cup \suppa{\bm{c}}|\nonumber\\
&=|\suppa{\mun}\cup\suppa{\bm{a}+\bm{b}_1-\bm{v}+(\bm{u}-\bm{b}_2)}|\nonumber\\
&\leq |\I \cup \supp{a}\cup\suppa{\bm{b}_1}\cup\supp{v}\cup \suppa{\bm{u}-\bm{b}_2}|\nonumber\\
&=|\I \cup \suppa{\bm{b}_1}\cup\supp{v}\cup \suppa{\bm{u}-\bm{b}_2}| \quad (\text{since $\supp{a}=\I \cap \J$})\nonumber\\
&\leq |\I |+\wa{\bm{b}_1}+\w{v}+\wa{-\bm{b}_2-(-\bm{u})}\nonumber\\
&=  |\I |+\wa{\bm{b}_1}+\w{v}+\wa{-\bm{b}_2}-\wa{-\bm{u}}\quad (\text{since $\xp{(-u)}\mid \xp{(-b_2)}$})\nonumber\\
&=  |\I |+\wa{\bm{b}_1}+\w{v}+\wa{\bm{b}_2}-\w{u}\nonumber\\
&\leq |\I |+\wa{\bm{b}_1}+\wa{\bm{b}_2}\quad (\text{since $\w{v}\leq\w{u}$})\nonumber\\
&=|\I \cup \J|=d_2(\Co).\label{eq:c2-c1-c1}
\end{align}
Assume $\dim\ev{\mun,\bm{c}}=2$. By \eqref{eq:c2-c1-c1}, we have  $\wev{\mun,\bm{c}}=d_2(\Co)$. Then,  $\bm{c}\in \bm{M}_2$. Consequently, $\bm{m}_2=\min_{\prec} \bm{M}_2\preceq \bm{c}$. This contradicts \eqref{eq:c2-1a}.  Hence, $\bm{c}\in\ev{\mun}$.

Then, $\md+\bm{u}-\bm{v}=\lambda \mun$ for some $\lambda \in\Fn{q}$. It follows that $\bm{v}-\bm{u}=\md-\lambda\mun$. Hence
\begin{align}
\ev{\mun,\bm{c}_g}=\ev{\mun,\bm{v}-\bm{u}}=\ev{\mun,\md-\lambda\mun}=\ev{\mun,\md}. \label{eq:otra-20}
\end{align}
Then, $\wev{\mun,\bm{c}_g}=d_2(\Co)$. It remains to show that $\wa{\bm{c}_g}=\wa{\md}$. 

Since $\bm{c}_g\in \bm{M}_2$, $\md\preceq \bm{c}_g$. Then 
\begin{equation}
\wa{\md}\leq \wa{\bm{c}_g}. \label{eq:des-00a1}
\end{equation}
From  \eqref{eq:or-new-dq} and \eqref{eq:des-00a1} it follows that $\wa{\md}= \wa{\bm{c}_g}$.

Let us now prove that $\bm{c}_g=\md$ when $q=2^s$.

Notice that we always have $\bm{u}=\bm{b}_2$. Indeed, 
assume $\bm{u}\neq \bm{b}_2$. Then, $\w{u}\leq \wa{\bm{b}_2}-1$. Hence, $\wa{\bm{c}_g}=\wa{\bm{v}-\bm{u}} \leq \w{v}+\w{u} \leq\wa{\bm{b}_2}+ \wa{\bm{b}_2}-1<\w{\md}$. However, $\wa{\md}= \wa{\bm{c}_g}$. Therefore, $\bm{u}=\bm{b}_2$. Then, $g=\xp{(-b_2)}-\xp{(-v)}$. 

Let us prove that $\bm{v}=\bm{a}+\bm{b}_1$. Assume $\bm{v}\neq \bm{a}+\bm{b_1}$. Since $\md\preceq \bm{c}_g$, it follows that $
\xpp{(a+b_1)}{(-b_2)}=\xp{\md}\preceq \xp{(v-b_2)}\preceq \xpp{v}{(-b_2)}$. Hence, $\xp{(a+b_1)}\preceq \xp{v}$.

On the other hand, since  $\xp{(-b_2)}-\xp{(-a-b_1)},\xp{(-b_2)}-\xp{(-v)}\in \Ic$, it follows that $\xp{(-a-b_1)}-\xp{(-v)}\in \Ic$. Since $\xp{(-v)}\not\in\inicC$, we have $\xp{(-v)}\prec \xp{(-a-b_1)}$, i.e.,  $\xp{v}\prec \xp{(a+b_1)}$. This contradicts the fact that $\xp{(a+b_1)}\preceq \xp{v}$.  Then, $\bm{v}=\bm{a}+\bm{b}_1$.  Hence, $\bm{c}_g=\md$.
\end{proof}
\end{Proposition}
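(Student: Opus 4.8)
The plan is to adapt the argument of Proposition~\ref{prop-m1} from $\mun$ to $\md$, with the hypothesis $|\I\cap\J|\leq\frac{|\J|+1}{2}$ serving to control the overlap $\I\cap\J$. First I would rewrite it as $|\I\cap\J|\leq|\J\setminus\I|+1$ and use this to split $\J\setminus\I$, writing $\md=\bm{a}+\bm{b}_1-\bm{b}_2$ with $\supp{a}=\I\cap\J$, $\suppa{\bm{b}_1}\sqcup\suppa{\bm{b}_2}=\J\setminus\I$, $\suppa{\bm{a}+\bm{b}_1}\cap\suppa{\bm{b}_2}=\emptyset$, and the balanced weights $\wa{\bm{b}_2}\leq\wa{\bm{a}+\bm{b}_1}\leq\wa{\bm{b}_2}+1$. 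This presents $\md$ as a difference of two disjoint-support words of nearly equal weight, matching the form of the elements of $\GRx$ from Proposition~\ref{prop-division-binomios-grobner}; the binomial $\xp{(a+b_1)}-\xp{b_2}$ then lies in $\Ic$ with leading monomial in $\inicC$, and the weight balance is exactly what later lets me match the weight of the resulting Gr\"obner element with $\wa{\md}$.

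I would then split on which of $\xp{(a+b_1)}$ and $\xp{b_2}$ is larger. In the main case $\xp{(a+b_1)}\succ\xp{b_2}$, some $g=\xp{u}-\xp{v}\in\GRx$ satisfies $\xp{u}\mid\xp{(a+b_1)}$. The core consists of three deductions, each a contradiction powered by $\md=\min_{\prec}\bm{M}_2$: first $\dim\ev{\mun,\bm{u}-\bm{v}}=2$, since otherwise $\bm{u}-\bm{v}\in\ev{\mun}$ would make $\md-(\bm{u}-\bm{v})$ an element of $\bm{M}_2$ strictly preceding $\md$; second a support count giving $\wev{\mun,\bm{u}-\bm{v}}\leq d_2(\Co)$, which with the previous step forces equality; and third $\bm{u}=\bm{a}+\bm{b}_1$, since a proper divisor would yield a member of $\bm{M}_2$ of weight below $\wa{\md}$. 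Once $\bm{u}=\bm{a}+\bm{b}_1$, the alternative $\bm{v}=\bm{b}_2$ gives $g$ directly, while $\bm{v}\neq\bm{b}_2$ is settled by observing $\xp{b_2}-\xp{v}\in\Ic$ with $\xp{v}\notin\inicC$, whence $\xp{v}\prec\xp{b_2}$; the weight inequalities together with $\md\preceq\bm{c}_g$ then deliver $\wev{\mun,\bm{c}_g}=d_2(\Co)$ and $\wa{\bm{c}_g}=\wa{\md}$.

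For the remaining case $\xp{b_2}\succ\xp{(a+b_1)}$, the balancing forces $\wa{\bm{b}_2}=\wa{\bm{a}+\bm{b}_1}$, and here I would invoke the order condition~\eqref{condicion-menos-orden}: the supports being disjoint, $\xp{(-b_2)}\succ\xp{(-a-b_1)}$, so $\xp{(-b_2)}\in\inicC$ and there is $g=\xp{(-u)}-\xp{(-v)}\in\GRx$ with $\xp{(-u)}\mid\xp{(-b_2)}$. The additional idea is to introduce $\bm{c}=\md+\bm{u}-\bm{v}$, verify by a monomial estimate that $\xp{c}\prec\xp{\md}$, and conclude $\bm{c}\in\ev{\mun}$ (else $\bm{c}\in\bm{M}_2$ would precede $\md$). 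This identity gives $\ev{\mun,\bm{c}_g}=\ev{\mun,\md}$, hence $\wev{\mun,\bm{c}_g}=d_2(\Co)$, and $\md\preceq\bm{c}_g$ together with the bound $\wa{\bm{c}_g}\leq\wa{\md}$ yields $\wa{\bm{c}_g}=\wa{\md}$.

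Finally, when $q=2^s$, where \eqref{condicion-menos-orden} holds for every monomial order, I would sharpen the divisor analysis to pin down $\bm{v}=\bm{b}_2$ in the first case and $\bm{u}=\bm{b}_2$, $\bm{v}=\bm{a}+\bm{b}_1$ in the second, giving $\bm{c}_g=\md$ exactly. The step I expect to be hardest is the second case: unlike in Proposition~\ref{prop-m1}, the leading monomial need not come from the distinguished half $\bm{a}+\bm{b}_1$, and regaining control requires both the balancing (to equalize the two half-weights) and the hypothesis~\eqref{condicion-menos-orden} (to transport leading-term information through negation). Without \eqref{condicion-menos-orden} the descent on $\bm{c}$ collapses, which is precisely why that structural condition on the order is imposed here.
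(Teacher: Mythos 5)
Your proposal follows essentially the same route as the paper's proof: the same decomposition $\md=\bm{a}+\bm{b}_1-\bm{b}_2$ balanced so that $\wa{\bm{b}_2}\leq\wa{\bm{a}+\bm{b}_1}\leq\wa{\bm{b}_2}+1$, the same case split on which of $\xp{(a+b_1)}$, $\xp{b_2}$ is the leading monomial, the same minimality-of-$\md$ contradictions to pin down $\bm{u}=\bm{a}+\bm{b}_1$ in the first case, and the same auxiliary word $\bm{c}=\md+\bm{u}-\bm{v}$ together with condition~\eqref{condicion-menos-orden} in the second. The sketch is correct; the only point to be careful about when writing it out is that the support count alone yields $\wev{\mun,\bm{u}-\bm{v}}\leq|\I|+\wa{\bm{b}_1}+\w{v}$, and the bound by $d_2(\Co)$ only follows once $\w{v}\leq\wa{\bm{b}_2}$ has been established within each sub-argument.
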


\textit{Note}. Regarding the conclusions in Proposition~\ref{prop-m2}, all the examples we have computed show that $\bm{c}_{g}=\md$ even when $q\neq 2^s$. However, to obtain the main result of this section (Theorem~\ref{corollary-main}), 
we only require the conditions $\wev{\mun,\bm{c}_g}=d_2(\Co)$ and $\wa{\bm{c}_g}=\wa{\md}$. 

\medskip

From Propositions~\ref{prop-m1},~\ref{prop-m2}, and Lemma~\ref{lemma-cg-m2}, we obtain the following theorem.

\begin{Teorema}\label{corollary-main}
Let $\Co\subset\F{q}{n}$ be a linear code such that $|\I \cap \J|\leq \tfrac{|\J|+1}{2}$. Let $\G$ be the reduced Gr\"obner basis of $\Ic$. Then, there exists $f,g\in \G$ such that $\dim\ev{\bm{c}_f,\bm{c}_g}=2$ and $d_2(\Co)=\wev{\bm{c}_f,\bm{c}_g}$,  where $\bm{c}_f,\bm{c}_g\in \crc{\Co}$. Hence, $M_{\G}$ is a $d_2$-test set.
\end{Teorema}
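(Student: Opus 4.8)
The plan is to read this theorem as a direct synthesis of the three results cited immediately before its statement, together with Corollary~\ref{lemma-m1-m2-d2-test-set}. The hypothesis $|\I\cap\J|\leq\tfrac{|\J|+1}{2}$ is exactly the standing assumption of Propositions~\ref{prop-m1} and~\ref{prop-m2}, so both are available, and the whole argument is an assembly rather than a fresh computation.

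First I would produce the two required elements of $\G$ and check they land in $M_\G$. By Proposition~\ref{prop-m1} there is $f\in\G$ with $\bm{c}_f=\mun$; by Lemma~\ref{lemma-m1-d2} we have $\mun\in\crc{\Co}$, so $\bm{c}_f=\mun\in M_\G$ by the definition in~\eqref{def-Mg}. By Proposition~\ref{prop-m2} there is $g\in\G$ with $\wev{\mun,\bm{c}_g}=d_2(\Co)$ and $\wa{\bm{c}_g}=\wa{\md}$. The first equality is precisely the defining condition for membership in $\bm{M}_2$, so $\bm{c}_g\in\bm{M}_2$; combining this with $\wa{\bm{c}_g}=\wa{\md}$ and invoking Lemma~\ref{lemma-cg-m2} yields $\bm{c}_g\in\crc{\Co}$, hence $\bm{c}_g\in M_\G$.

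Next I would verify two-dimensionality and the weight identity. Since $\bm{c}_g\in\bm{M}_2$ and, by Lemma~\ref{lemma-m1-d2}, $\wa{\mun}<d_2(\Co)$, the codeword $\bm{c}_g$ cannot lie in $\ev{\mun}$: otherwise $\wev{\mun,\bm{c}_g}=\wa{\mun}<d_2(\Co)$, contradicting $\wev{\mun,\bm{c}_g}=d_2(\Co)$. Therefore $\dim\ev{\bm{c}_f,\bm{c}_g}=\dim\ev{\mun,\bm{c}_g}=2$, and by construction $d_2(\Co)=\wev{\mun,\bm{c}_g}=\wev{\bm{c}_f,\bm{c}_g}$, with $\bm{c}_f,\bm{c}_g\in\crc{\Co}$. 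Finally, because $\mun\in M_\G$ and $\bm{c}_g\in\bm{M}_2\cap M_\G$ with $\wa{\bm{c}_g}=\wa{\md}$, applying Corollary~\ref{lemma-m1-m2-d2-test-set} with $M=M_\G$ gives that $M_\G$ is a $d_2$-test set.

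Since every step is a direct citation, there is no genuine obstacle at this level; all the substantive work has been pushed into Propositions~\ref{prop-m1} and~\ref{prop-m2}. The only point that needs a moment's care is to notice that the single conclusion $\wev{\mun,\bm{c}_g}=d_2(\Co)$ of Proposition~\ref{prop-m2} does double duty: it places $\bm{c}_g$ in $\bm{M}_2$ (feeding both Lemma~\ref{lemma-cg-m2} and Corollary~\ref{lemma-m1-m2-d2-test-set}) and it forces $\dim\ev{\mun,\bm{c}_g}=2$. One must also keep track of the minimal-support conclusions of Lemmas~\ref{lemma-m1-d2} and~\ref{lemma-cg-m2}, rather than the weight identities alone, since membership in $M_\G$ requires $\bm{c}_f,\bm{c}_g\in\crc{\Co}$.
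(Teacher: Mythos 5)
Your proposal is correct and follows exactly the route the paper intends: the paper derives Theorem~\ref{corollary-main} directly from Propositions~\ref{prop-m1} and~\ref{prop-m2} together with Lemma~\ref{lemma-cg-m2} (and implicitly Lemma~\ref{lemma-m1-d2} and Corollary~\ref{lemma-m1-m2-d2-test-set}), giving no further details. Your write-up simply makes that assembly explicit, including the minor verifications (membership of $\bm{c}_f,\bm{c}_g$ in $\crc{\Co}$ and the two-dimensionality of $\ev{\bm{c}_f,\bm{c}_g}$) that the paper leaves to the reader.
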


The linear code in the next example was taken from \cite[Example 4]{marquez-corbella_ideal_2016}.
\begin{Ejemplo}\label{ej1-d2-test-set}
Let $\Co\subset\F{3}{9}$ be a linear code with generator matrix
$$G=\begin{pmatrix}
1 & 0 & 0 & 0 & 0 & 1 & 0 & 2 & 0 \\
0 & 1 & 0 & 0 & 1 & 1 & 1 & 0 & 1 \\
0 & 0 & 1 & 1 & 2 & 2 & 1 & 1 & 0
\end{pmatrix}.
$$

Consider the order $\prec_{\text{rev}}$ where
$
\underbrace{x_{9,2} \prec x_{9,1}}_{X_9}\prec \underbrace{x_{8,2}\prec x_{8,1}}_{X_8}\prec\ldots\prec\underbrace{x_{1,2}\prec x_{1,1}}_{X_1}
$.

By using the computer algebra system \textsc{SageMath} \citep{sage_usage}, we obtain $$\mun=(2, 0, 0, 0, 0, 2, 0, 1, 0)\text{ and }\md=(0, 1, 0, 0, 1, 1, 1, 0, 1).$$ 

Then, $\I =\{1,6,8\}$ and $\J=\{2,5,6,7,9\}$. Hence, $d_2(\Co)=|\I   \cup \J|=7$. Therefore
$$|\I \cap \J|=1 <3=\frac{|\J|+1}{2}.$$

For the computation of the reduced Gr{\"o}bner basis $\G$ of $\Ic$, we implemented in \textsc{SageMath} \citep{sage_usage} the FGLM algorithm for monoid rings, as presented in \cite{borges2006generala}. Hence, $\G$ has $457$ binomials, of which $27$ are elements of $\Rx$. As proved in Theorem~\ref{corollary-main}, we have the binomials $f= x_{1,1}x_{8,2} - x_{6,2} \in\G$ and $g=x_{6,2}x_{7,2}x_{9,2} - x_{2,1}x_{5,1}\in \G$, where $\bm{c}_f=\mun$ and $\bm{c}_g=\md$. Then, $M_\G$ is a $d_2$-test set. 
\end{Ejemplo}

\section{On the existence of linear codes where $M_\G$ is not a \mbox{$d_2$-test} set}\label{s:mg-no-d2}

By the previous section, we know that there are linear codes $\Co\subset\F{q}{n}$ where $M_\G$ is a $d_2$-test set and $q>2$ (see Example~\ref{ej1-d2-test-set}). Unfortunately, this is not a general fact. In this section, we provide a family of linear codes  not satisfying this property. More precisely, we prove the following theorem:

\begin{Teorema}\label{main-result}
For each $q>2$, there exist linear codes $\Co\subset\F{q}{n}$ such that $M_{\G}$ is not a $d_2$-test set.
\end{Teorema}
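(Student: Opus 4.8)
The plan is to construct, for each prime power $q>2$, an explicit family of codes witnessing the failure. The natural strategy is to build a code in which the optimal pair $\mun,\md$ realizing $d_2(\Co)$ necessarily has $|\I\cap\J|>\tfrac{|\J|+1}{2}$, so that the sufficient condition of Theorem~\ref{corollary-main} fails, and then to verify directly that the Gr\"obner-basis codewords in $M_\G$ cannot reproduce this optimal pair. The earlier Example with generator matrix $\begin{pmatrix}1&1&1&2&1&2&0&0\\0&0&1&1&1&1&1&1\end{pmatrix}$ over $\F{3}{}$ already exhibits the failure of \eqref{eq:desi-binary-case}, so I would use a structurally similar template: take a two-dimensional code $\Co=\Co(n,2)_q$ whose nonzero codewords are heavily overlapping in support. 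By Remark~\ref{re:M1-M2-dim2}, when $\dim\Co=2$ we have $\bm{M}_1=\Coz$ and $\bm{M}_2=\Co\setminus\ev{\mun}$, which makes the sets $\I,\J$ and the quantity $d_2(\Co)=\wa{\Co}$ completely transparent to compute.

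First I would fix the generator matrix so that every nonzero codeword has the same weight $w<n$ (a constant-weight, i.e. two-weight-degenerate, configuration), forcing $|\I|=|\J|=w$ for any degree-compatible order, while $d_2(\Co)=|\I\cup\J|=n$. Then $|\I\cap\J|=2w-n$, and I would choose the parameters so that $2w-n>\tfrac{w+1}{2}$, i.e. $3w>2n+1$, placing the code strictly outside the regime covered by Theorem~\ref{corollary-main}. The second and more delicate task is to show that no pair $\bm{c}_f,\bm{c}_g$ arising from $\GRx$ can achieve $\wev{\bm{c}_f,\bm{c}_g}=d_2(\Co)$ with $\dim\ev{\bm{c}_f,\bm{c}_g}=2$. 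Here I would invoke Proposition~\ref{prop-division-binomios-grobner}: every $f\in\GRx$ has the form $\xp{a}-\xp{b}$ with $\w{b}\le\w{a}\le\w{b}+1$, so $\w{\bm{c}_f}\le\w{a}+\w{b}\le 2\w{b}+1$. This bounds the weights of codewords in $M_\G$ from above, and by choosing the constant weight $w$ large relative to this bound I can guarantee that $M_\G$ simply does not contain two independent codewords whose union of supports is all of $[n]$ — indeed $M_\G$ may be forced to contain only codewords of strictly smaller weight, or at most one codeword of the full constant weight $w$.

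The main obstacle I anticipate is controlling $M_\G$ precisely enough: the reduced Gr\"obner basis of $\Ic$ is a global object whose codewords $\bm{c}_g$ depend on the monomial order and on the cancellation rules encoded in $\Rx$, so ruling out the existence of a good pair requires either a clean weight argument (as sketched above, using Proposition~\ref{prop-division-binomios-grobner} to cap the weights) or an explicit determination of $M_\G$ for the chosen family. I would aim for the weight argument, since it avoids computing the full basis: the key inequality to establish is that any two independent codewords drawn from $M_\G$ have $\wev{\cdot,\cdot}<d_2(\Co)$ or that $M_\G\cap\crc{\Co}$ contains at most one line's worth of the weight-$w$ codewords, so that Remark~\ref{Remark-independiente} (distinct supports for independent minimal codewords) blocks the required configuration. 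Finally I would package the construction as an explicit family parametrized by $q$ — for instance taking $\Co$ over $\F{q}{}$ with a generator matrix whose columns realize all needed field ratios — and verify the numeric inequality $3w>2n+1$ together with the weight cap in one or two lines, perhaps illustrating the smallest case with a concrete computation analogous to Example~\ref{ej1-d2-test-set} to confirm via \textsc{SageMath} that $M_\G$ indeed fails the test.
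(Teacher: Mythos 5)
There is a genuine gap, and it lies exactly at the step you yourself flag as ``the second and more delicate task.'' Your template is a two-dimensional constant-weight code $\Co(n,2)_q$. But for \emph{any} two-dimensional code, every pair of linearly independent codewords spans all of $\Co$, so $\wev{\bm{c}_f,\bm{c}_g}=\wa{\Co}=d_2(\Co)$ automatically; hence $M_\G$ fails to be a $d_2$-test set only if $M_\G$ is contained in a single line $\ev{\bm{c}}$. In the constant-weight case every nonzero codeword has minimal support (no support of size $w$ can strictly contain another of size $w$), so $M_\G$ coincides with the full set $\mathcal{T}$ of codewords associated to $\G\setminus\Rx$, and that set spans $\Co$ (the reduced Gr\"obner basis must reduce $\xp{a}$ for every codeword $\bm{a}-\bm{b}$, which forces representatives outside any fixed line). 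So your family is provably \emph{not} a counterexample: violating the sufficient hypothesis $|\I\cap\J|\leq\frac{|\J|+1}{2}$ of Theorem~\ref{corollary-main} does not negate the conclusion, and in dimension two it never can. Your fallback weight argument also does not close the gap: Proposition~\ref{prop-division-binomios-grobner} gives $\wa{\bm{c}_f}\leq 2\w{b}+1$, but $\w{b}$ is not bounded independently of the code, so this yields no absolute cap on the weights occurring in $M_\G$ that you could beat by taking $w$ large.

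The paper's construction is considerably more involved precisely to overcome this. It starts from a two-dimensional code $D'$ violating the inequality (your Example-style configuration is used only as this seed), but then embeds it as $D=\ev{\bm{c}_1,\bm{c}_2}$ inside a much larger code $\Co=\langle\bm{c}_1,\bm{c}_2,\bm{u}_1-\bm{v}_1,\ldots,\bm{u}_\ell-\bm{v}_\ell\rangle$, where the $\bm{u}_i$ run over \emph{all} words of weight $r=\lfloor|\J[D']|/2\rfloor$ supported on $\suppa{D'}$ and the $\bm{v}_i$ have pairwise disjoint supports in fresh coordinates. Two separate arguments are then needed: Proposition~\ref{d2C}, a long case analysis showing $D$ is the \emph{unique} plane realizing $d_2(\Co)$ (this is where the numerical hypotheses $d_2(D')<3r$ and $d_2(D')<|\I[D']|+r$ enter); and Proposition~\ref{out-c1}, showing that for any $\bm{c}\in D\setminus\ev{\bm{c}_1}$ the tail $\xp{b}$ of a would-be Gr\"obner binomial with $\bm{c}_f=\bm{c}$ satisfies $\w{b}\geq r$ and $\supp{b}\subset\suppa{D'}$, hence is divisible by some leading term $\xp{u_j}$ and lies in $\inicC$ --- excluding all such codewords from $\G$. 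Neither of these mechanisms is present in your proposal, and without something playing their role the claim that $M_\G$ misses the optimal pair cannot be established.
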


The theorem is proved at the very end of this section. We need first several preliminary results and some constructions appearing in the following subsections.

\subsection{Construction of the linear code $\Co$}

In order to construct a linear code $\Co$ where $M_\G$ is not a $d_2$-test set, we begin by proving the following technical result, which will be essential for the subsequent construction of $\Co$.

\begin{Lemma}\label{l:desig}
Let $D'\subset \F{q}{m}$ be a linear code. Let $r=\left\lfloor \frac{|\J[D']|}{2} \right\rfloor$. Suppose $D'$ satisfies the following conditions:
\begin{align}
|\I[D']\cap \J[D']|&>\frac{|\J[D']|+1}{2}\text{ and } \label{eq:ine-im1}\\
|\I[D']|<|\J[D']|\quad  &\text{ or } \quad |\J[D']| \text{ is even}.
\label{eq:ine-im2}
\end{align}
Then
\begin{enumerate}
\item\label{d-1} $d_2(D')<3r$.
\item\label{d-2} $d_2(D')<|\I[D']|+r$.
\end{enumerate}
\end{Lemma}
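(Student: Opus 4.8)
The plan is to reduce the statement to elementary integer arithmetic. I would abbreviate $a=|\I[D']|$, $b=|\J[D']|$, and $c=|\I[D']\cap\J[D']|$, and record the two structural facts inherited from the general theory: applying \eqref{eq:d2-equi} to $D'$ gives $d_2(D')=|\I[D']\cup\J[D']|=a+b-c$, while \eqref{m1Mm2} gives $a\le b$. Writing $r=\pe{b}$, the two claims to be proved read $a+b-c<3r$ and $a+b-c<a+r$. The first move is to put hypothesis \eqref{eq:ine-im1} into a usable integer form: from $c>\tfrac{b+1}{2}$ and integrality one obtains $2c\ge b+2$.

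The crux of the argument is the single inequality $b-c\le r-1$, which I would prove by splitting on the parity of $b$. If $b=2r$, then $2c\ge 2r+2$ yields $c\ge r+1$, so $b-c\le r-1$. If $b=2r+1$, then $2c\ge 2r+3$; since $2c$ is even this sharpens to $2c\ge 2r+4$, so $c\ge r+2$ and again $b-c\le r-1$. This bound uses only \eqref{eq:ine-im1}, and it already settles Claim~(2): indeed $d_2(D')=a+(b-c)\le a+(r-1)<a+r$.

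For Claim~(1) I would combine the same bound with an estimate on $a$. From $b-c\le r-1$ we get $d_2(D')=a+(b-c)\le a+r-1$, so it suffices to show $a\le 2r$. If $b=2r$ is even this is immediate from $a\le b$. If $b=2r+1$ is odd, then $a\le b$ only gives $a\le 2r+1$, which is not enough; here I would invoke hypothesis \eqref{eq:ine-im2}, which in the odd case forces $a<b$, hence $a\le 2r$. In both cases $d_2(D')\le a+r-1\le 3r-1<3r$, proving Claim~(1).

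The only subtle point—and the thing to get right—is the exact role of \eqref{eq:ine-im2}. The borderline configuration $a=b$ with $b$ odd, which for $b\ge 5$ is not excluded by \eqref{eq:ine-im1} alone, produces $c=r+2$ and hence $a+b-c=3r$, so Claim~(1) would fail by a single unit; hypothesis \eqref{eq:ine-im2} is tailored precisely to rule out this one case, and it is used nowhere else. Everything beyond that is routine parity bookkeeping, so I do not anticipate any real obstacle once the reduction to $a,b,c$ and the key inequality $b-c\le r-1$ are in place.
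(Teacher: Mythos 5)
Your proof is correct and follows essentially the same route as the paper's: both reduce to elementary arithmetic on $a=|\I[D']|$, $b=|\J[D']|$, $c=|\I[D']\cap\J[D']|$ via $d_2(D')=a+b-c$ and $a\le b$, both derive part~(2) from $b-c\le r-1$ using only \eqref{eq:ine-im1}, and both invoke \eqref{eq:ine-im2} solely to exclude the borderline case $a=b$ with $b$ odd in part~(1). The only difference is organizational: you argue part~(1) directly from $b-c\le r-1$ and $a\le 2r$, whereas the paper runs the same inequalities in contrapositive form, assuming $3r\le d_2(D')$ and forcing equality throughout.
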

\begin{proof}
We prove~\ref{d-1}. Suppose that $3 \pep{|\J[D']|}=3r \leq d_2(D')=|\I[D']|+|\J[D']|-|\I[D']\cap \J[D']|$.
Then
\begin{align}
|\I[D']\cap \J[D']|&\leq |\I[D']|+|\J[D']|- 3 \pe{|\J[D']|}\nonumber\\
&\leq |\I[D']|+|\J[D']|-3\frac{|\J[D']|-1}{2} \qquad \left(\text{since }\pep{|\J[D']|}\geq \tfrac{|\J[D']|-1}{2}\right),\nonumber\\
&\leq |\J[D']|+|\J[D']|-3\frac{|\J[D']|-1}{2}\qquad  (|\I[D']|\leq|\J[D']|),\nonumber\\
&=\frac{|\J[D']|+1}{2}+1. \label{eq:d-pr-des-1}
\end{align}
By \eqref{eq:ine-im1}, we must have $|\I[D']\cap \J[D']|=\tfrac{|\J[D']|+1}{2}+1$. Substituting this relation into \eqref{eq:d-pr-des-1} turns every inequality into an equality. In particular $|\I[D']|=|\J[D']|$ and $|\J[D']|$ is odd. However, this contradicts  \eqref{eq:ine-im2}. Thus, $d_2(D')<3r$.

Now we prove~\ref{d-2}. Since $|\I[D']\cap \J[D']|>\tfrac{|\J[D']|+1}{2}$, it follows that 
$\tfrac{|\J[D']|}{2}-\tfrac{1}{2}>|\J[D']|-|\I[D']\cap \J[D']|$. Moreover, given that $r=\pep{|\J[D']|}\geq  \tfrac{|\J[D']|}{2}-\tfrac{1}{2}$, we have 
$r>|\J[D']|-|\I[D']\cap \J[D']|$. Hence
\begin{equation*}
|\I[D']|+r>|\I[D']|+|\J[D']|-|\I[D']\cap \J[D']|=|\I[D']\cup \J[D']|=d_2(D'). \qedhere
\end{equation*}
\end{proof}

\begin{Remark}
Regarding the inequality (\ref{eq:ine-im1}), notice that in the binary case one always has $|\I[D']\cap \J[D']|\leq \frac{|\J[D']|+1}{2}$  (see \eqref{eq:desi-binary-case}).
\end{Remark}

In the following example, we show that, for any $q>2$, there are linear codes that satisfy \eqref{eq:ine-im1} and \eqref{eq:ine-im2}.
\begin{Ejemplo}
Let $D'=\ev{\bm{c}_1,\bm{c}_2}\subset\F{q}{2q+2}$, where $q>2$ and
\begin{align*}
\setcounter{MaxMatrixCols}{20}
\bm{c}_1&=\begin{pmatrix}
1&1&\alpha&\alpha^2&\ldots&\alpha^{q-1}&\alpha&\alpha^2&\ldots&\alpha^{q-1}&0&0
\end{pmatrix},\\
\bm{c}_2&=\begin{pmatrix}
0&0&1&1\phantom{\alpha}&\ldots&\phantom{\alpha}1\phantom{\alpha}&1&1\phantom{\alpha}&\ldots&\phantom{\alpha}1&\phantom{\alpha^2}1&1
\end{pmatrix}.
\end{align*}

For each $\bm{c}\in D'\setminus\{\bm{0}\} $, we have $\w{c}=2q$. This is clear if  $\bm{c}\in\ev{\bm{c}_1}$ or $\bm{c}\in\ev{\bm{c}_2}$. Now, if $\bm{c}\in D'\setminus (\ev{\bm{c}_1}\cup \ev{\bm{c}_2})$, then $\bm{c}=\alpha^{i_1}\bm{c}_1+\alpha^{i_2}\bm{c}_2$ for some $\alpha^{i_i},\alpha^{i_2}\in \Fn{q}$. Let $i=i_2-i_1$, then 
$$\w{c}=\wa{\alpha^{i_1}\bm{c}_1+\alpha^{i_2}\bm{c}_2}=\wa{\alpha^{i_1}(\bm{c}_1+\alpha^i\bm{c}_2)}=\wa{\bm{c}_1+\alpha^i\bm{c}_2},$$
where
\begin{align*}
\setcounter{MaxMatrixCols}{20}
\bm{c}_1+\alpha^i\bm{c}_2=\begin{pmatrix}
1&1&\alpha+\alpha^i&\ldots&\alpha^{q-1}+\alpha^i&\alpha+\alpha^i&\ldots&\alpha^{q-1}+\alpha^i&\alpha^i&\alpha^i
\end{pmatrix}.
\end{align*}

Since there exists exactly one $\alpha^j$ such that $\alpha^j+\alpha^i=0$, we have $$\w{c}=\wa{\bm{c}_1+\alpha^i\bm{c}_2}=(2q+2)-2=2q.$$
Hence, $\w{c}=2q$ for each $\bm{c}\in D'\setminus\{\bm{0}\}$.

Thus, for any degree compatible order we have $|\I[D']|=|\J[D']|=2q$. Since $|\I[D']\cup \J[D']|=d_2(D')=\wa{D'}=2q+2$, we have $|\I[D']\cap \J[D']|=|\I[D]'|+|\J[D']|-|\I[D']\cup\J[D']|=2q-2$. Also,  $|\I[D']|$, $|\J[D']|$ are even and
$$|\I[D']\cap \J[D']|=2q-2>\frac{2q+1}{2}=\frac{|\J[D']|+1}{2} \qquad (q> 2).$$
Hence, $D'$ satisfies \eqref{eq:ine-im1} and \eqref{eq:ine-im2}.
\end{Ejemplo}

In what follows we construct a special linear code $\Co\subset\F{q}{n}$, for each $q>2$. This code will be used  to prove Theorem~\ref{main-result} in the next subsection. The construction of such code is divided in four steps.

Moreover, since we will be working with various linear codes, for each linear code $\mathcal{D}$ we write $\mun(\mathcal{D})$ and $\md(\mathcal{D})$ instead of $\mun$ and $\md$. Likewise, we denote by $\bm{M}_1(\mathcal{D})$ and $\bm{M}_2(\mathcal{D})$ the sets defined in Section~\ref{s:d2-test-set}.

\medskip

\noindent \step\label{step-1} We first construct an auxiliary code $D'\subset \F{q}{m}$ with some prescribed conditions.

Let $D'=\ev{\bm{c}_1',\bm{c}_2'}\subset \F{q}{m}$ where $\dim (D')=2$. Let $K$ be a field and let  $\prec_{m}$ be an order on the monomials of $R_m=K[x_{i,j}:1\leq i\leq m, 1\leq j\leq q-1]$. The order in the indeterminates is given by: $X_m\prec_m \ldots \prec_m X_2\prec_m X_1$. Moreover, using the order $\prec_{m}$ we have 
$$\bm{m}_1(D')=\bm{c}_1' \text{ and } \bm{m}_2(D')=\bm{c}_2'.$$
Hence, by Remark~\ref{re:M1-M2-dim2}, we obtain
\begin{equation}
\bm{c}_1'=\min_{\prec_m}(D'\setminus\{\bm{0}\}) \text{ and } \bm{c}_2'=\min_{\prec_m}(D'\setminus\ev{\bm{c}'_1}).\label{eq:m1-m2-D-prime}
\end{equation}
It follows that  $d_2(D')=\wa{D'}=\wev{\bm{c}_1',\bm{c}_2'}=|\suppa{\bm{c}_1'}\cup \suppa{\bm{c}_2'}|=|\I[D']\cup \J[D']|$.

Let $r= \pep{\w{c_2'}} =\pep{|\J[D']|}$. Then
\begin{equation}
\w{\bm{c}_2'}\geq 2r.
\label{eq:des-r1}
\end{equation}
Moreover, assume that $D'$ satisfies the hypothesis on Lemma~\ref{l:desig}. Then 
\begin{align}
d_2(D')&<3r,  \label{eq:des-en-r}\\
d_2(D')&<|\I[D']|+r. \label{eq:des-con-I}
\end{align}

\noindent\step\label{step-2}  Building on the notation introduced in Step~\ref{step-1}, we now define a subset $P\subset\F{q}{m}$, together with collections of vectors $\bm{c}_i$, $\bm{u}_j$ and $\bm{v}_j$, whose index ranges will be specified below. These objects will be used in the construction of $\Co$.

Consider the set 
\begin{equation}
P=\left\{\bm{u}'\in \F{q}{m}\mid \suppa{\bm{u}'}\subset \suppa{D'} \text{ and } \wa{\bm{u}'}=r\right\}.\label{definition-P}
\end{equation} 
Let $\ell=|P|={\wa{D'} \choose r}\cdot (q-1)^{r}$. 

We now order the elements of $P$ using the order $\prec_m$:
$$\bm{u}_{\ell}'\prec_m\bm{u}_{\ell-1}'\prec_m\ldots \prec_m\bm{u}_{2}' \prec_m\bm{u}_{1}'.$$
We now define the following vectors. For each $i\in[\ell]$
\begin{align}
 \bm{v}_i'&=(\underbrace{0\ \ldots \ 0}_{m}\ \underbrace{0\ \ldots \ 0}_{(i-1)r}\  \underbrace{1\ldots 1}_{r})\in \F{q}{m+ir}. \label{def-vi-prima}
\end{align}
Let $n=m+\ell r$. Consider the function 
\begin{align*}
\iota_{m,n}:\F{q}{m}&\to \F{q}{n}\\
\bm{c}&\mapsto (\bm{c} \  \underbrace{0\ldots 0}_{n-m}).
\end{align*} 

\begin{Remark}\label{remark-funcion-inclusion}
For a word $\bm{c}\in\F{q}{m}$ we have $\suppa{\iota_{m,n}(\bm{c})}=\supp{c}\subset[m]$. Also, for a vector subspace $V\subset\F{q}{m}$, we have $\suppa{\iota_{m,n}(V)}=\suppa{V}$.
\end{Remark}
For each $i\in\{1,2\}$ and $j\in [\ell]$, we define the vectors
\begin{align*}
\bm{c}_i=\iota_{m,n}(\bm{c}_i'),  \qquad
\bm{u}_j=\iota_{m,n}(\bm{u}_j'), \qquad 
\bm{v}_j=\iota_{m+jr,n}( \bm{v}_j'). 
\end{align*}

Notice that from \eqref{eq:des-r1} and Remark~\ref{remark-funcion-inclusion} we have
\begin{equation}
\wa{\bm{c}_2}=\wa{\bm{c}_2'}\geq 2r. \label{eq:des-c2-r}
\end{equation}

From the definition of $\bm{v}_i'$ in \eqref{def-vi-prima}, it follows that $\wa{\bm{v}_i}=r$. Moreover
\begin{equation}
\suppa{\bm{v}_i}\cap \suppa{\bm{v}_j}=\emptyset, \qquad i\neq j. \label{sop-disjunto-vi}
\end{equation}

\noindent \step\label{step-3}With the notation from Steps~\ref{step-1} and~\ref{step-2}, we fix an order on the monomials of $\Kx=K[x_{i,j}: 1\leq i\leq n,\ 1\leq j\leq q-1]$, satisfying additional conditions.

Fix a degree compatible order $\prec$ on the monomials of 
$\Kx$. The order of the indeterminates is given by \eqref{orden-indeterminadas}. Moreover, we require this order to be compatible with the monomial order $\prec_m$ used over the monomials of $R_m$. In other words, let $r_1$ and $r_2$ be monomials in $R_m$. Then, seen as monomials in $\Kx$, we have
\begin{equation}
r_1\prec r_2\Leftrightarrow r_1\prec_m r_2.
\label{eq:rela-order-m}
\end{equation}

Additionally, we require that this order satisfies the condition: for $\bm{u},\bm{v}\in\F{q}{n}$, if $\w{u}=\w{v}$, $\supp{u}\subset\{1,2,\ldots,m\}$ and $\supp{v}\cap\{1,2,\ldots,m\}=\emptyset$, then $\xp{u}\succ \xp{v}$. The orders $\prec_{\text{lex}}$ and $\prec_{\text{rev}}$ satisfy the required condition, since we are using the order in the indeterminates given by: $X_n\prec \ldots \prec X_m\prec \ldots \prec X_2\prec X_1$. 

\medskip

\noindent \step\label{step-4} We now proceed to define $\Co$ using the notation introduced in Steps~\ref{step-1}-\ref{step-3}.

Define a linear code $\Co$ as follows:
\begin{equation}
\Co=\langle \bm{c}_1,\bm{c}_2,\bm{u}_1-\bm{v}_1,\bm{u}_2-\bm{v}_2,\ldots,\bm{u}_{\ell}-\bm{v}_{\ell}   \rangle\subset\F{q}{n}.
\label{eq:codigo-general}
\end{equation}

Since $\wa{\bm{u}_i}=\wa{\bm{v}_i}$, $\suppa{\bm{u}_i}\subset\{1,2,\ldots,m\}$ and $\suppa{\bm{v}_i}\cap\{1,2,\ldots,m\}=\emptyset$, we have $\xp{u_i}\succ \xp{v_i}$. Also, since $\xp{u_i}-\xp{v_i}\in\Ic$ for each $i\in[\ell]$, we have
\begin{equation}
\xp{u_i}\in \inicC,\qquad i\in[\ell].\label{inic-ui}  
\end{equation}

Let $U=\{\bm{u}_1-\bm{v}_1,\bm{u}_2-\bm{v}_2,\ldots,\bm{u}_{\ell}-\bm{v}_{\ell}\}$. We now show that the set $\{\bm{c}_1,\bm{c}_2\}\cup U$ is linearly independent.  Let $\lambda_1,\lambda_2,\theta_1,\ldots,\theta_\ell \in \Fn{q}$ be such that
\begin{align*}
\lambda_1\bm{c}_1+\lambda_2\bm{c}_2+\sum_{i=1}^{\ell}\theta_i(\bm{u}_i-\bm{v}_i)&=(\underbrace{0\ldots0}_{m}\ \underbrace{0\ldots0}_{r}\ \ldots \ \underbrace{0\ldots0}_{r})\\
(\underbrace{\lambda_1\bm{c}_1'+\lambda_2\bm{c}_2'+\sum_{i=1}^{\ell}\theta_i\bm{u}_i'}_{m}\ \underbrace{(-\theta_1)\ldots  (-\theta_1)}_{r} \ldots \ \underbrace{(-\theta_{\ell})\ldots  (-\theta_\ell)}_{r}) &=(\underbrace{0\ldots0}_{m}\ \underbrace{0\ldots0}_{r}\ \ldots \ \underbrace{0\ldots0}_{r}).
\end{align*}
Then, $\theta_1=\ldots =\theta_{\ell}=0$. This implies that in the first $m$ components we have $\lambda_1\bm{c}_1'+\lambda_2\bm{c}_2'=\bm{0}\in \F{q}{m}$. Since $\{\bm{c}_1',\bm{c}_2'\}$ is linearly independent, it follows that $\lambda_1=\lambda_2=0$.

\subsection{Second generalized Hamming weight of $\Co$}

In this subsection we continue the discussion of the code $\Co$ introduced above. In particular, we show that there is a unique two-dimensional subspace of $\Co$ whose weight coincides with $d_2(\Co)$. 

\begin{Proposition}\label{d2C}
With the notation of the previous subsection, let $D=\ev{\bm{c}_1,\bm{c}_2}\subset\Co\subset\F{q}{n}$. Then, $\wa{D}=d_2(\Co)$ and $D$ is the only two-dimensional subspace of $\Co$ with this property.
\begin{proof}
To prove the Proposition, we first compute $\wa{D}$. By Remark~\ref{remark-funcion-inclusion} we have
\begin{align}
\wa{D}=\wev{\bm{c}_1,\bm{c}_2}&=|\suppa{\bm{c}_1}\cup \suppa{\bm{c}_2}|\nonumber\\
&=|\suppa{\bm{c}_1'}\cup\suppa{\bm{c}_2' }|=\wa{D'}=d_2(D'). \label{eq:ine2-c1-c2} 
\end{align}
Also, for any $\bm{c}\in D\setminus\{\bm{0}\}$, we have $\bm{c}=\mu_1\bm{c}_1+\mu_2\bm{c}_2$ for some $\mu_1,\mu_
2\in\Fn{q}$. Then
\begin{align}
\w{c}&=\wa{\mu_1\bm{c}_1+\mu_2\bm{c}_2}=\wa{\mu_1\bm{c}_1'+\mu_2\bm{c}_2'}\geq \wa{\bm{c}_1'}=|\I[D']|. \label{eq:des-2r-1}
\end{align}

We now find the second generalized Hamming weight of $\Co$. We do this by showing that the weight of every two-dimensional subspace of $\Co$ is greater or equal than the weight of $D$.

Let $\bm{x}_1,\bm{x}_2\in\Co$, where $\dim(\ev{\bm{x}_1,\bm{x}_2})=2$. Then, for $i=1,2$ we have
\begin{align*}
\bm{x}_i&=\lambda_1^{(i)}\bm{c}_1+\lambda_2^{(i)}\bm{c}_2
+\sum_{j=1}^{N_i}\theta_{i_j}(\bm{u}_{i_j}-\bm{v}_{i_j}), \end{align*}
for some $\bm{u}_{i_j}-\bm{v}_{i_j}\in U $, $\lambda_1^{(i)},\lambda_2^{(i)}\in \Fn{q}$, $\theta_{i_j}\in \Fn{q}\setminus\{0\}$ and $N_i\in \mathbb{N}\cup\{0\}$. Now, for $i=1,2$ we define
\begin{align*}
\bm{x}_i'&=\lambda_1^{(i)}\bm{c}_1+\lambda_2^{(i)}\bm{c}_2
+\sum_{j=1}^{N_i}\theta_{i_j}\bm{u}_{i_j},\\
A_i&=\{\bm{u}_{i_j}-\bm{v}_{i_j}:1\leq j\leq N_i\},\\
B_i&=\suppa{-\theta_{i_j}\sum_{j=1}^{N_i}\bm{v}_{i_j}}=\bigcup_{j=1}^{N_i}\suppa{\bm{v}_{i_j}}, \qquad (\text{by }\eqref{sop-disjunto-vi}). 
\end{align*}

Since $\suppa{\bm{c}_1}$, $ \suppa{\bm{c}_2}$, $\suppa{\bm{u}_{i_j}}\subset [m]$ and $\suppa{\bm{v}_{i_j}}\cap [m]=\emptyset$, it follows that  $\suppa{\bm{x}_i}=\suppa{\bm{x}_i'}\sqcup B_i$. In addition, by \eqref{sop-disjunto-vi} we have $|B_i|=rN_i$. Also, if $A_1\cap A_2=\emptyset$, then $B_1\cap B_2=\emptyset$.

Now we have
\begin{align*}
\wev{\bm{x}_1,\bm{x}_2}&=|\suppa{\bm{x}_1}\cup \suppa{\bm{x}_2}|\\
&=|\suppa{\bm{x}_1'}\cup B_1\cup \suppa{\bm{x}_2'}\cup B_2|\\
&=|\suppa{\bm{x}_1'}\cup \suppa{\bm{x}_2'}|+|B_1\cup B_2|.
\end{align*}
Hence, $\wev{\bm{x}_1,\bm{x}_2}\geq |B_i|=rN_i$ for $i=1,2$. If $N_1\geq 3$ or $N_2\geq 3$, then $\wev{\bm{x}_1,\bm{x}_2}\geq 3r$. In these cases, by \eqref{eq:des-en-r} and  \eqref{eq:ine2-c1-c2} it follows that $\wev{\bm{x}_1,\bm{x}_2}\geq 3r>d_2(D')=\wa{D}$.
Hence, to find $d_2(\Co)$ it suffices to study the cases where $N_1,N_2\in \{0,1,2\}$.
\medskip

{
\setlength{\parindent}{0pt}
\case\label{caso:1} Let $N_1=N_2=0$. 

In this case, $\bm{x}_1,\bm{x}_2\in \ev{\bm{c}_1,\bm{c}_2}$ and so $\ev{\bm{x}_1,\bm{x}_2}=D$. Hence, $\wev{\bm{x}_1,\bm{x}_2}=\wa{D}$.
}
\medskip

Let us emphasize that, in all the cases that follow, the corresponding weight is strictly greater than $\wa{D}$.

\medskip

{
\setlength{\parindent}{0pt}
\case\label{case:2} Either $N_1=0$ or $N_2=0$, but not both.

Without loss of generality, we assume $N_2=0$. Hence,  $\bm{x}_2\in\ev{\bm{c}_1,\bm{c}_2}\setminus\{\bm{0}\}$. By \eqref{eq:des-2r-1} we have $\wa{\bm{x}_2}\geq |\I[D']|$. Also, since $N_1\in \{1,2\}$, we have $|B_1|=rN_1\geq r$.  Hence 
\begin{align*}
\wev{\bm{x}_1,\bm{x}_2}&=|\suppa{\bm{x}_ 1'}\cup \suppa{\bm{x}_ 2}|+ |B_1|\\
&\geq \wa{\bm{x}_2}+|B_1|
\geq \wa{\bm{x}_2}+r
\geq |\I[D']|+r
>d_2(D')=\wa{D} \quad (\text{by \eqref{eq:des-con-I}}). 
\end{align*}

\case\label{caso:3} Let $N_1=N_2=1$.
\begin{align*}
\bm{x}_i=\lambda_1^{(i)}\bm{c}_1+\lambda_2^{(i)}\bm{c}_2
+\theta_{i_1}(\bm{u}_{i_1}-\bm{v}_{i_1}), \qquad i=1,2.
\end{align*}

\subcase\label{subcase:3.1} $A_1=\{\bm{u}_{1_1}-\bm{v}_{1_1}\}=\{\bm{u}_{2_1}-\bm{v}_{2_1}\}=A_2$.

Let $\mu\in\Fn{q}$ be such that $\theta_{1_1}+\mu\theta_{2_1}=0$. 

Since $\dim(\ev{\bm{x}_1,\bm{x}_2})=2$, we have $\bm{x}_1+\mu\bm{x}_2=(\lambda_1^{(1)}+\mu\lambda_1^
{(2)})\bm{c}_1+(\lambda_2^{(1)}+\mu\lambda_2^{(2)})\bm{c}_2\neq\bm{0}$.

Also, $\wev{\bm{x}_1,\bm{x}_2}=\wev{\bm{x}_1,\bm{x}_1+\mu\bm{x}_2}$, so we are in Case~\ref{case:2}. Then, $\wev{\bm{x}_1,\bm{x}_2}>\wa{D}$.

\subcase\label{subcase:3.2} $A_1\neq A_2$. Then, $B_1\cap B_2=\emptyset$.
\begin{align*}
\wev{\bm{x}_1,\bm{x}_2}&= |\suppa{\bm{x}_{1}'}\cup \suppa{\bm{x}_{2}'}|+|B_1\cup B_2|\\ 
&= |\suppa{\bm{x}_{1}'}\cup \suppa{\bm{x}_{2}'}|+| B_1|+| B_2|\\
&= |\suppa{\lambda_1^{(1)}\bm{c}_1+\lambda_2^{(1)}\bm{c}_2
+\theta_{1_1}\bm{u}_{1_1}}\cup \suppa{\bm{x}_{2}'}|+2r\\
&\geq \wa{\lambda_1^{(1)}\bm{c}_1+\lambda_2^{(1)}\bm{c}_2
+\theta_{1_1}\bm{u}_{1_1}}+2r.
\end{align*}

\subsubcase\label{caso:3.2.1} $\lambda_1^{(1)}\bm{c}_1+\lambda_2^{(1)}\bm{c}_2=\bm{0}$.
\begin{align*}
\wev{\bm{x}_1,\bm{x}_2}\geq  \wa{\theta_{1_1}\bm{u}_{1_1}}+2r= r+2r=3r>d_2(D')=\wa{D}  \qquad (\text{by \eqref{eq:des-en-r}}).
\end{align*}

\subsubcase\label{caso:3.2.2} $\lambda_1^{(1)}\bm{c}_1+\lambda_2^{(1)}\bm{c}_2\neq \bm{0}$.
\begin{align*}
\wev{\bm{x}_1,\bm{x}_2}&\geq \wa{\lambda_1^{(1)}\bm{c}_1+\lambda_2^{(1)}\bm{c}_2
+\theta_{1_1}\bm{u}_{1_1}}+2r\\
&\geq\wa{\lambda_1^{(1)}\bm{c}_1+\lambda_2^{(1)}\bm{c}_2}-\wa{\theta_{1_1}\bm{u}_{1_1}}+2r\\
&=\wa{\lambda_1^{(1)}\bm{c}_1+\lambda_2^{(1)}\bm{c}_2}+r\qquad (\text{since }\wa{\theta_{1_1}\bm{u}_{1_1}}=r),\\
&\geq |\I[D']|+r \qquad (\text{by \eqref{eq:des-2r-1}}),\\
&>\ d_2(D') \qquad (\text{by \eqref{eq:des-con-I}}),\\
&=\wa{D}.
\end{align*}

\case\label{case:4} Let  $N_1,N_2\in \{1,2\}$,  where $N_1\neq N_2$.

Without loss of generality, we assume $N_1=2$ and $N_2=1$.
\begin{align*}
\bm{x}_1&=\lambda_1^{(1)}\bm{c}_1+\lambda_2^{(1)}\bm{c}_2+\theta_{1_1}(\bm{u}_{1_1}-\bm{v}_{1_1})+\theta_{1_2}(\bm{u}_{1_2}-\bm{v}_{1_2}),\\
\bm{x}_2&=\lambda_1^{(2)}\bm{c}_1+\lambda_2^{(2)}\bm{c}_2+\theta_{2_1}(\bm{u}_{2_1}-\bm{v}_{2_1}).
\end{align*}

\subcase\label{case:4.1} $A_1\cap A_2\neq \emptyset$.  Without loss of generality, let $\bm{u}_{1_1}-\bm{v}_{1_1}=\bm{u}_{2_1}-\bm{v}_{2_1}$. Then,   
\begin{align*}
\bm{x}_1&=\lambda_1^{(1)}\bm{c}_1+\lambda_2^{(1)}\bm{c}_2+\theta_{1_1}(\bm{u}_{1_1}-\bm{v}_{1_1})+\theta_{1_2}(\bm{u}_{1_2}-\bm{v}_{1_2}),\\
\bm{x}_2&=\lambda_1^{(2)}\bm{c}_1+\lambda_2^{(2)}\bm{c}_2+\theta_{2_1}(\bm{u}_{1_1}-\bm{v}_{1_1}).
\end{align*}
Let $\mu\in\Fn{q}$ be such that $\theta_{1_1}+\mu\theta_{2_1}=0$. 

Since $\dim(\ev{\bm{x}_1,\bm{x}_2})=2$, it follows that $$\bm{x}_1+\mu\bm{x}_2=(\lambda_1^{(1)}+\mu\lambda_1^{(2)})\bm{c}_1+(\lambda_2^{(1)}+\mu\lambda_2^{(2)})\bm{c}_2+\theta_{1_2}(\bm{u}_{1_2}-\bm{v}_{1_2})\neq\bm{0}.$$
Since $\wev{\bm{x}_1,\bm{x}_2}=\wev{\bm{x}_2,\bm{x}_1+\mu\bm{x}_2}$, we are back in Case~\ref{subcase:3.2}. Then, $\wev{\bm{x}_1,\bm{x}_2}>\wa{D}$.\\

\subcase\label{case:4.2} $A_1\cap A_2= \emptyset$. Then, $B_1\cap B_2=\emptyset$.
\begin{align*}
\wev{\bm{x}_1,\bm{x}_2}&= |\suppa{\bm{x}_{1}'}\cup \suppa{\bm{x}_{2}'}\cup B_1\cup B_2|\\ 
&= |\suppa{\bm{x}_{1}'}\cup \suppa{\bm{x}_{2}'}|+| B_1|+| B_2|\\
&= |\suppa{\bm{x}_{1}'}\cup \suppa{\bm{x}_{2}'}|+2r+r\\
&\geq  3r>d_2(D')=\wa{D} \qquad (\text{by \eqref{eq:des-en-r}}).
\end{align*}

\case\label{case:5} Let $N_1=N_2=2$.
\begin{align*}
\bm{x}_1&=\lambda_1^{(1)}\bm{c}_1+\lambda_2^{(1)}\bm{c}_2+\theta_{1_1}(\bm{u}_{1_1}-\bm{v}_{1_1})+\theta_{1_2}(\bm{u}_{1_2}-\bm{v}_{1_2}),\\
\bm{x}_2&=\lambda_1^{(2)}\bm{c}_1+\lambda_2^{(2)}\bm{c}_2+\theta_{2_1}(\bm{u}_{2_1}-\bm{v}_{2_1})+\theta_{2_2}(\bm{u}_{2_2}-\bm{v}_{2_2}).
\end{align*}

In general we have
\begin{align*}
\wev{\bm{x}_1,\bm{x}_2}=|\suppa{\bm{x}_1'}\cup \suppa{\bm{x}_2'}|+|B_1|+|B_2|-|B_1\cap B_2|.
\end{align*}
In this case $|B_1|=|B_2|=2r$. If $|B_1\cap B_2|\leq r$, then $|B_1|+|B_2|-|B_1\cap B_2|\geq 3r>d_2(D')$. Hence, we have $\wev{\bm{x}_1,\bm{x}_2}>d_2(D')=\wa{D}$.

So, the only remaining case is $|B_1\cap B_2|=2r$. This happens only when $A_1=A_2$. In this case we have
\begin{align*}
\bm{x}_1&=\lambda_1^{(1)}\bm{c}_1+\lambda_2^{(1)}\bm{c}_2+\theta_{1_1}(\bm{u}_{1_1}-\bm{v}_{1_1})+\theta_{1_2}(\bm{u}_{1_2}-\bm{v}_{1_2}),\\
\bm{x}_2&=\lambda_1^{(2)}\bm{c}_1+\lambda_2^{(2)}\bm{c}_2+\theta_{2_1}(\bm{u}_{1_1}-\bm{v}_{1_1})+\theta_{2_2}(\bm{u}_{1_2}-\bm{v}_{1_2}).
\end{align*}

Let $\mu\in\Fn{q}$ be such that $\theta_{1_1}+\mu\theta_{2_1}=0$. Then $$\bm{x}_1+\mu\bm{x}_2=(\lambda_1^{(1)}+\mu\lambda_1^{(2)})\bm{c}_1+(\lambda_2^{(1)}+\mu\lambda_2^{(2)})\bm{c}_2+(\theta_{1_2}+\mu\theta_{2_2})(\bm{u}_{1_2}-\bm{v}_{1_2})\neq\bm{0}.$$

We have $\wev{\bm{x}_1,\bm{x}_2}=\wev{\bm{x}_1,\bm{x}_1+\mu\bm{x}_2}$.

\subcase\label{case:5.1} $\theta_{1_2}+\mu\theta_{2_2}=0$.

In this case, we are back in Case~\ref{case:2}. Then,  $\wev{\bm{x}_1,\bm{x}_2}>\wa{D}$.

\subcase\label{case:5.2} $\theta_{1_2}+\mu\theta_{2_2}\neq 0$.

Now we are in Case~\ref{case:4.1}, so again $\wev{\bm{x}_1,\bm{x}_2}>\wa{D}$.
}

\medskip

As a result of the outcome of the previous cases we conclude $d_2(\Co)=\wa{D}$. Furthermore, the previous analysis also showed that $D=\ev{\bm{c}_1,\bm{c}_2}$ is the only two-dimensional subspace of $\Co$ such that $\wa{D}=d_2(\Co)$. 
\end{proof}
\end{Proposition}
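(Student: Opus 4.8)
The plan is to first evaluate $\wa{D}$ exactly and then to show that every two-dimensional subspace of $\Co$ has weight at least $\wa{D}$, with equality attained only by $D$. Since $\bm{c}_1=\iota_{m,n}(\bm{c}_1')$ and $\bm{c}_2=\iota_{m,n}(\bm{c}_2')$, Remark~\ref{remark-funcion-inclusion} immediately gives
\begin{equation*}
\wa{D}=|\suppa{\bm{c}_1}\cup\suppa{\bm{c}_2}|=|\suppa{\bm{c}_1'}\cup\suppa{\bm{c}_2'}|=\wa{D'}=d_2(D').
\end{equation*}
Thus the target weight is $d_2(D')$, and the entire argument rests on comparing candidate weights against the two bounds $d_2(D')<3r$ (inequality~\eqref{eq:des-en-r}) and $d_2(D')<|\I[D']|+r$ (inequality~\eqref{eq:des-con-I}), which I would keep at hand throughout.

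Next I would pick an arbitrary two-dimensional subspace $\ev{\bm{x}_1,\bm{x}_2}$ and expand each $\bm{x}_i$ over the generators of $\Co$, separating the part $\bm{x}_i'$ supported on the first $m$ coordinates (the contribution of $\bm{c}_1,\bm{c}_2$ and the $\bm{u}_{i_j}$) from the tail coming from the $\bm{v}_{i_j}$. The structural key is that the $\bm{v}_j$ have pairwise disjoint supports lying entirely outside $[m]$ (by~\eqref{sop-disjunto-vi}), so $\suppa{\bm{x}_i}=\suppa{\bm{x}_i'}\sqcup B_i$ with $|B_i|=rN_i$, where $N_i$ counts the generators $\bm{u}_{i_j}-\bm{v}_{i_j}$ that appear in $\bm{x}_i$. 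This yields $\wev{\bm{x}_1,\bm{x}_2}=|\suppa{\bm{x}_1'}\cup\suppa{\bm{x}_2'}|+|B_1\cup B_2|$. Since $\wev{\bm{x}_1,\bm{x}_2}\geq|B_i|=rN_i$, the bound $d_2(D')<3r$ disposes at once of every configuration with $N_1\geq3$ or $N_2\geq3$, reducing the problem to the finitely many cases $N_1,N_2\in\{0,1,2\}$.

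The core of the argument is then a case analysis on $(N_1,N_2)$, arranged so that the value $\wa{D}$ is attained exactly in the case $N_1=N_2=0$, where $\ev{\bm{x}_1,\bm{x}_2}=D$. In every other case I would show the weight strictly exceeds $\wa{D}$ through one of two mechanisms. When the sets of generators $A_1,A_2$ are disjoint, the tails are disjoint ($|B_1\cup B_2|=|B_1|+|B_2|$), forcing a weight of at least $3r$ or at least $|\I[D']|+r$ and hence exceeding $d_2(D')$; here the estimate $\wa{\bm{c}}\geq|\I[D']|$ for nonzero $\bm{c}\in D$ (inequality~\eqref{eq:des-2r-1}) closes the borderline subcases in which a nonzero $\ev{\bm{c}_1,\bm{c}_2}$-component survives. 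When $A_1$ and $A_2$ share a generator, I would instead replace $\bm{x}_2$ by a combination $\bm{x}_1+\mu\bm{x}_2$ chosen to cancel the shared generator; the identity $\wev{\bm{x}_1,\bm{x}_2}=\wev{\bm{x}_1,\bm{x}_1+\mu\bm{x}_2}$ preserves the weight while lowering the generator count, so the configuration reduces to an already settled case.

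The main obstacle will be the bookkeeping in these cancellation reductions: in each shared-generator subcase one must check that $\bm{x}_1+\mu\bm{x}_2$ is nonzero and independent from $\bm{x}_1$ (so the span stays two-dimensional) and that the reduction lands in a strictly earlier case, so that no circularity arises. Once the reductions are threaded correctly, uniqueness comes for free: the analysis shows that $N_1=N_2=0$---equivalently $\ev{\bm{x}_1,\bm{x}_2}=D$---is the only way to realize the weight $\wa{D}=d_2(\Co)$.
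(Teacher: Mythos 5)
Your proposal is correct and follows essentially the same route as the paper's proof: the same decomposition $\suppa{\bm{x}_i}=\suppa{\bm{x}_i'}\sqcup B_i$ with $|B_i|=rN_i$, the same reduction to $N_1,N_2\in\{0,1,2\}$ via $d_2(D')<3r$, and the same two mechanisms (disjoint tails bounded below by $3r$ or $|\I[D']|+r$, versus cancellation of a shared generator by passing to $\bm{x}_1+\mu\bm{x}_2$ and reducing to an earlier case). The concerns you flag about non-vanishing of $\bm{x}_1+\mu\bm{x}_2$ and avoiding circular reductions are exactly the checks the paper carries out in its case analysis.
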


\begin{Corollary}\label{m1-m2-codigo-general}
$\mun(\Co)=\bm{c}_1$ and $\md(\Co)=\bm{c}_2$.
\begin{proof}
Since $\dim(D)=\dim(\ev{\bm{c}_1,\bm{c}_2})=2$, by Remark~\ref{re:M1-M2-dim2} we have
$$\bm{m}_1(D)=\min_{\prec}(D\setminus\{\bm{0}\}) \text{ and } \bm{m}_2(D)=\min_{\prec}(D\setminus\ev{\bm{m}_1(D)}).$$

Then, in view of \eqref{eq:m1-m2-D-prime}, \eqref{eq:rela-order-m}, and the definition of  $\bm{c}_1$, it follows that $\bm{m}_1(D)=\bm{c}_1$.
Hence, $\bm{m}_2(D)=\min_{\prec}(\ev{\bm{c}_1,\bm{c}_2}\setminus\ev{\bm{c}_1})$.
Similarly, we conclude that $\bm{m}_2(D)=\bm{c}_2$.

Now we prove that $\bm{c}_1=\min_{\prec}(\bm{M}_1(\Co))$. Let $\bm{m}\in \bm{M}_1(\Co)$. Then, $\bm{m}\neq \bm{0}$ and there exists $\bm{m}'\in\Co$ such that $\wev{\bm{m},\bm{m}'}=d_2(\Co)$. By Proposition~\ref{d2C}, $D$ is the only two-dimensional subspace of $\Co$ satisfying that its weight is $d_2(\Co)$.  Hence, if $\dim\ev{\bm{m},\bm{m}'}=2$, then $\ev{\bm{m},\bm{m}'}=D$. Since $\bm{c}_1=\min_{\prec }(D\setminus\{\bm{0}\})$ and $\bm{m}\neq \bm{0}$, it follows that $\bm{c}_1\preceq \bm{m}$. On the other hand, if $\bm{m}'\in\ev{\bm{m}}$, then $d_2(\Co)=\wev{\bm{m},\bm{m}'}=\w{m}$. By Lemma~\ref{lemma-m1-d2} we have $\wa{\bm{c}_1}=\wa{\mun(D)}<d_2(D)=d_2(\Co)=\w{m}$. Then, $\wa{\bm{c}_1}<\w{m}$. Hence, $\bm{c}_1\prec \bm{m}$.

From the previous discussion and the fact that $\bm{c}_1\in \bm{M}_1(\Co)$ we conclude that $\bm{c}_1=\min_{\prec}(\bm{M}_1(\Co))$. Then, $\bm{m}_1(\Co)=\bm{c}_1$.

Therefore, $\bm{M}_2(\Co)=\left\{\bm{m} \in\Co \mid d_{\ii}(\mathcal{C})=\wev{\bm{c}_1, \bm{m}}\right\}$. Let $\bm{m}\in \bm{M}_2(\Co)$.
Since $\wa{\bm{c}_1}<d_2(\Co)$, $\dim(\ev{\bm{c}_1,\bm{m}})=2$. Moreover, since $d_{\ii}(\mathcal{C})=\wev{\bm{c}_1, \bm{m}}$, it follows that $\ev{\bm{c}_1, \bm{m}}=D$. Hence,  $\bm{M}_2(\Co)=D\setminus\ev{\bm{c}_1}$. Then, $\bm{m}_2(\Co)=\bm{m}_2(D)=\bm{c}_2$.
\end{proof}
\end{Corollary}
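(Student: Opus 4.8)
The plan is to reduce the computation of $\mun(\Co)$ and $\md(\Co)$ to the already-understood two-dimensional subspace $D=\ev{\bm{c}_1,\bm{c}_2}$, exploiting the uniqueness established in Proposition~\ref{d2C}. First I would pin down $\mun(D)$ and $\md(D)$. Since $\dim(D)=2$, Remark~\ref{re:M1-M2-dim2} gives $\bm{m}_1(D)=\min_{\prec}(D\setminus\{\bm{0}\})$ and $\bm{m}_2(D)=\min_{\prec}(D\setminus\ev{\bm{m}_1(D)})$. By the order-compatibility \eqref{eq:rela-order-m} between $\prec$ and $\prec_m$, together with the description of $\bm{c}_1'$ and $\bm{c}_2'$ as $\prec_m$-minima in \eqref{eq:m1-m2-D-prime} and the definition $\bm{c}_i=\iota_{m,n}(\bm{c}_i')$, I expect $\bm{m}_1(D)=\bm{c}_1$ and $\bm{m}_2(D)=\bm{c}_2$ to follow directly.

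Next I would prove $\bm{m}_1(\Co)=\bm{c}_1$ by showing $\bm{c}_1\preceq\bm{m}$ for every $\bm{m}\in\bm{M}_1(\Co)$. Fix such an $\bm{m}$ and a witness $\bm{m}'\in\Co$ with $\wev{\bm{m},\bm{m}'}=d_2(\Co)$, and split according to $\dim\ev{\bm{m},\bm{m}'}$. If this dimension is $2$, then $\ev{\bm{m},\bm{m}'}$ is a two-dimensional subspace of weight $d_2(\Co)$, so Proposition~\ref{d2C} forces $\ev{\bm{m},\bm{m}'}=D$; hence $\bm{m}\in D\setminus\{\bm{0}\}$ and $\bm{c}_1\preceq\bm{m}$ because $\bm{c}_1=\min_{\prec}(D\setminus\{\bm{0}\})$. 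If instead $\bm{m}'\in\ev{\bm{m}}$, then $d_2(\Co)=\w{m}$, while Lemma~\ref{lemma-m1-d2} applied to $D$ (whose $d_2(D)=\wa{D}=d_2(\Co)$ since $\dim(D)=2$) yields $\wa{\bm{c}_1}<d_2(\Co)=\w{m}$, so degree compatibility gives $\bm{c}_1\prec\bm{m}$. As $\bm{c}_1$ itself lies in $\bm{M}_1(\Co)$, witnessed by $\bm{c}_2$, this establishes $\bm{c}_1=\min_{\prec}\bm{M}_1(\Co)=\mun(\Co)$.

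Finally I would determine $\md(\Co)$. Knowing $\mun(\Co)=\bm{c}_1$, the set $\bm{M}_2(\Co)$ consists of all $\bm{m}\in\Co$ with $\wev{\bm{c}_1,\bm{m}}=d_2(\Co)$. For any such $\bm{m}$ the strict inequality $\wa{\bm{c}_1}<d_2(\Co)$ rules out $\bm{m}\in\ev{\bm{c}_1}$, so $\dim\ev{\bm{c}_1,\bm{m}}=2$ and uniqueness again yields $\ev{\bm{c}_1,\bm{m}}=D$; thus $\bm{M}_2(\Co)=D\setminus\ev{\bm{c}_1}$, whose $\prec$-minimum is $\bm{m}_2(D)=\bm{c}_2$. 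The only genuine obstacle is the degenerate branch $\bm{m}'\in\ev{\bm{m}}$, where $\ev{\bm{m},\bm{m}'}$ collapses to a line and Proposition~\ref{d2C} cannot be invoked directly; there the strict bound $\wa{\bm{c}_1}<d_2(\Co)$ coming from Lemma~\ref{lemma-m1-d2} is exactly what rescues the argument. Everything else is bookkeeping resting on the uniqueness of $D$.
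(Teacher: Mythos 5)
Your proposal is correct and follows essentially the same route as the paper: identify $\bm{m}_1(D)=\bm{c}_1$ and $\bm{m}_2(D)=\bm{c}_2$ via Remark~\ref{re:M1-M2-dim2} and the order compatibility, then reduce $\bm{M}_1(\Co)$ and $\bm{M}_2(\Co)$ to $D$ using the uniqueness from Proposition~\ref{d2C}, handling the degenerate one-dimensional branch with the strict inequality $\wa{\bm{c}_1}<d_2(\Co)$ from Lemma~\ref{lemma-m1-d2}. The case split and the supporting results invoked coincide with the paper's argument.
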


\begin{Proposition}\label{out-c1}
With the notation of the previous subsection, let $\G$ be the reduced Gr\"obner basis of $\Ic$. For $\bm{c}\in \ev{\bm{c}_1,\bm{c}_2} \setminus \ev{\bm{c}_1}$, there is no $f\in \G$ such that $\bm{c}_f=\bm{c}$.
\begin{proof}
Let $\bm{c}\in \ev{\bm{c}_1,\bm{c}_2} \setminus \ev{\bm{c}_1}$. Then,  $\supp{c}\subset \suppa{\bm{c}_1}\cup\suppa{\bm{c}_2}=\suppa{D}=\suppa{D'}$. Also, since
$\ev{\bm{c}_1,\bm{c}_2}=\ev{\bm{c}_1,\bm{c}}$, by Proposition~\ref{d2C},  $\wev{\bm{c}_1,\bm{c}}=d_2(\Co)$.  
Thus, $\bm{c}\in \bm{M}_2(\Co)$. By Corollary~\ref{m1-m2-codigo-general} we have $\bm{m}_2(\Co)=\bm{c}_2$.  Then, $\bm{c}_2=\min_{\prec}(\bm{M}_2(\Co))\leq \bm{c}$. Hence, by \eqref{eq:des-c2-r} we have
\begin{equation}
2r\leq \wa{\bm{c}_2}\leq\w{c}.
\label{eq:m2-peso-r}
\end{equation} 

For the sake of contradiction, we assume that there exists $f\in\G$ such that $\bm{c}_f=\bm{c}$. Since $\bm{c}\neq \bm{0}$, it follows that $f\in\GRx$. Then, by Proposition~\ref{prop-division-binomios-grobner}, we know that $f$ has the form $f=\xp{a}-\xp{b}$, where $\xp{b}\prec\xp{a}$ and $\w{b}\leq \w{a}\leq \w{b}+1$. Since $f\in\GRx$, then
\begin{equation}
\xp{b}\notin\inicC. \label{eq:b-not-in-inicC}
\end{equation}

Since $\bm{c}_f=\bm{c}=\bm{a}-\bm{b}$, by Lemma~\ref{soportes-rem} we have $\supp{c}=\supp{a}\cup \supp{b}$. Hence, $\supp{b}\subset \supp{c}\subset\suppa{D'}$.

If $\w{b}<r$, then $\w{b}\leq r-1$. Hence, $\w{a}\leq r$. It follows that $\w{c}=\w{a-b}\leq \w{a}+\w{b}\leq r+r-1=2r-1$. But, from \eqref{eq:m2-peso-r} we have $\w{c}\geq 2r$. Thus, $\w{b}\geq r$.

We recall that the set $P$ defined in \eqref{definition-P} contains all the words $\bm{u}_i'\in\F{q}{m}$  such that $\suppa{\bm{u}_i'}\subset\suppa{D'}$ and $\w{\bm{u}_i'}=r$. Then, we can find $\bm{u}_j'$ such that $\xp{u_j'}\mid \xp{b}$.

Moreover, $\xp{u_j'}=\xp{u_j}$ where $\bm{u}_j-\bm{v}_j \in U$. Hence, $\xp{u_j}\mid \xp{b}$. From \eqref{inic-ui}, we have $\xp{u_j}\in \inicC$. Then, $\xp{b}\in \inicC$, but this contradicts \eqref{eq:b-not-in-inicC}.
\end{proof}
\end{Proposition}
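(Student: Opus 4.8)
The plan is to argue by contradiction, manufacturing a monomial that must both lie in and fail to lie in the initial ideal $\inicC$. The starting point is to place $\bm{c}$ inside the machinery of Section~\ref{s:d2-test-set}. Since $\bm{c}\in\ev{\bm{c}_1,\bm{c}_2}\setminus\ev{\bm{c}_1}$, the pair $\{\bm{c}_1,\bm{c}\}$ still spans $D=\ev{\bm{c}_1,\bm{c}_2}$, so Proposition~\ref{d2C} together with Corollary~\ref{m1-m2-codigo-general} yields $\wev{\bm{c}_1,\bm{c}}=d_2(\Co)$ and hence $\bm{c}\in\bm{M}_2(\Co)$. Because $\bm{c}_2=\md(\Co)=\min_{\prec}\bm{M}_2(\Co)$ by Corollary~\ref{m1-m2-codigo-general}, we get $\bm{c}_2\preceq\bm{c}$, and combining this with the weight bound \eqref{eq:des-c2-r} produces the crucial lower estimate $\w{c}\geq\wa{\bm{c}_2}\geq 2r$.

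Assuming now that some $f\in\G$ satisfies $\bm{c}_f=\bm{c}$, I would note that $\bm{c}\neq\bm{0}$ forces $f\in\GRx$, so Proposition~\ref{prop-division-binomios-grobner} writes $f=\xp{a}-\xp{b}$ with $\xp{b}\prec\xp{a}$ and $\w{b}\leq\w{a}\leq\w{b}+1$. Reducedness of $\G$ guarantees that the trailing monomial $\xp{b}$ is divisible by no leading term of $\G$, hence $\xp{b}\notin\inicC$. By Lemma~\ref{soportes-rem} one has $\supp{c}=\supp{a}\cup\supp{b}$, and since $\bm{c}\in D$ the support $\supp{c}$ sits inside $\suppa{D}=\suppa{D'}$; in particular $\supp{b}\subset\suppa{D'}$. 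A short weight count then rules out $\w{b}<r$: that would give $\w{a}\leq r$ and $\w{c}\leq\w{a}+\w{b}\leq 2r-1$, contradicting $\w{c}\geq 2r$. Therefore $\w{b}\geq r$.

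The final step exploits the explicit design of the set $P$ from \eqref{definition-P}. The monomial $\xp{b}$ is a squarefree product of one subindeterminate per support position, so selecting $r$ of the positions in $\supp{b}\subset\suppa{D'}$ yields a word $\bm{u}_j'\in P$ with $\xp{u_j'}\mid\xp{b}$. Since $\bm{u}_j'$ is one of the words used to build $\Co$ and the inclusion $\iota_{m,n}$ preserves the associated monomial, $\xp{u_j'}=\xp{u_j}$, whence $\xp{u_j}\mid\xp{b}$; by \eqref{inic-ui} this places $\xp{b}\in\inicC$, the desired contradiction. The step I expect to be most delicate is securing the bound $\w{c}\geq 2r$, since everything downstream hinges on it: it is exactly what forces $\w{b}\geq r$ and thereby lets a construction-generator $\xp{u_j}$ of $\inicC$ divide the reduced-basis trailing monomial $\xp{b}$, colliding with the reducedness of $\G$.
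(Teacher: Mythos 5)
Your proposal is correct and follows essentially the same route as the paper's proof: establish $\bm{c}\in\bm{M}_2(\Co)$ to get $\w{c}\geq\wa{\bm{c}_2}\geq 2r$, write $f=\xp{a}-\xp{b}$ via Proposition~\ref{prop-division-binomios-grobner}, use Lemma~\ref{soportes-rem} to confine $\supp{b}$ to $\suppa{D'}$, force $\w{b}\geq r$ by the weight count, and then extract some $\bm{u}_j'\in P$ with $\xp{u_j}\mid\xp{b}$ to contradict $\xp{b}\notin\inicC$. The step you flag as delicate is handled exactly as in the paper, so no gaps remain.
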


We are now in position of proving Theorem~\ref{main-result}.

\begin{proof}(of Theorem~\ref{main-result})
Let $\Co$ be the linear code defined in \eqref{eq:codigo-general}.  

Suppose $M_\G$ is a $d_2$-test set. Then, there exist  $f,g\in \G$ such that $\dim\ev{\bm{c}_f,\bm{c}_g}=2$ and $d_2(\Co)=\wev{\bm{c}_f,\bm{c}_g}$. By Proposition~\ref{d2C}, we have $\ev{\bm{c}_f,\bm{c}_g}=D=\ev{\bm{c}_1,\bm{c}_2}$. Then, there exist $\lambda_1,\lambda_2,\mu_1,\mu_2\in \Fn{q}$ such that $\bm{c}_f=\lambda_1\bm{c}_1+\lambda_2\bm{c}_2$ and $\bm{c}_g=\mu_1\bm{c}_1+\mu_2\bm{c}_2$.  By Proposition~\ref{out-c1}, we have $\lambda_2=\mu_2=0$. However,  this contradicts the fact that $\dim\ev{\bm{c}_f,\bm{c}_g}=2$.
\end{proof}

\section{Minimal free resolutions and $d_2$-test sets}\label{s:free-res}

Let $\Co\subset\F{q}{n}$ be a linear code and $M\subset\crc{\Co}$. In this section, we show how the condition that $M$ is a $d_2$-test set is equivalent to determining the second GHW of $\Co$  from the Betti numbers of  the minimal free resolution of the monomial ideal associated to $M$. Here, we adopt the notation introduced in Subsection~\ref{s:pre-stanley}.

Let $\Co(n,k)_q$ be a linear code and $M\subset\crc{\Co}$. Let $\ideal{M}$ be the ideal associated to $M$, defined in \eqref{ideal-M-g}. First, we discuss how the Betti numbers of the minimal free resolution of $R/\ideal{M}$  are related to the Hamming weight of codewords in $M$ and to the weight of two-dimensional vector subspaces generated by codewords from $M$.  More specifically, we have the following:

\begin{Proposition}\label{lemma-d1-d2-test}
Let $\Co(n,k)_q$ be a linear code, and let $M\subset \crc{\Co}$ be such that $|S_M|\geq 2$. Then
\begin{enumerate}
\item\label{3-a} $\pd(R/\ideal{M})\leq k$.
\item\label{3-b} $\min \{j \mid \beta_{1,j}(R/\ideal{M})\neq 0\}=\min\{\w{c}:\bm{c}\in M\}$.
\item\label{3-c} $\min \{j \mid \beta_{2,j}(R/\ideal{M})\neq 0\}=\min \{\wev{\bm{c},\bm{c}'}: \bm{c},\bm{c}'\in M,\ \dim \ev{ \bm{c},\bm{c}'}=2\}$.
\end{enumerate}

\begin{proof}
We consider the minimal graded free resolution of $R/\ideal{M}$:
$$0 \longrightarrow F_{\rho} \stackrel{\partial_{\rho}}\longrightarrow \cdots \stackrel{\partial_{3}}\longrightarrow F_2 \stackrel{\partial_{2}}\longrightarrow F_1 \stackrel{\partial_{1}}\longrightarrow F_0=R \longrightarrow R / \ideal{M} \longrightarrow 0.$$

Since $\ideal{M}\subset\ideal{\crc{\Co}}$, the statement in~\ref{3-a} follows from Theorem~\ref{prop:betti-verdure}.

It remains to prove statements in~\ref{3-b} and~\ref{3-c}.
Let $S_M=\{\sigma_1,\sigma_2,\ldots,\sigma_r\}$. Then, $\ideal{M}=\langle \bm{x}^{\sigma_1},\bm{x}^{\sigma_2},\ldots,\bm{x}^{\sigma_r} \rangle$. Since $M\subset \crc{\Co}$, the set $\{\bm{x}^{\sigma_1},\bm{x}^{\sigma_2},\ldots,\bm{x}^{\sigma_r}\}$ is the minimal monomial generating set of $\ideal{M}$. Also, since $|S_M|\geq 2$, we have $\rho\geq 2$. 

Now, for each $\sigma_i\in S_M$, there exists $\bm{c}_i\in M$ such that $\suppa{\bm{c}_i}=\sigma_i$. Hence, $F_1=\textstyle\bigoplus_{j=1}^r R(-\deg(\bm{x}^{\sigma_j}))=\bigoplus_{j=1}^r R(-\wa{\bm{c}_j})$. Then 
 $$\min \{j \mid \beta_{1,j}(R/\ideal{M})\neq 0\}=\min\{\wa{\bm{c}_j}:j\in[r]\}=\min\{\w{c}:\bm{c}\in M\}.$$

In the remainder of the proof, we focus on proving item~\ref{3-c}.  Let $a=\min \{\wev{\bm{c},\bm{c}'}: \bm{c},\bm{c}'\in M,\ \dim \ev{ \bm{c},\bm{c}'}=2\}$. We want to prove that $\min \{j \mid \beta_{2,j}(R/\ideal{M})\neq 0\}=a$. Since $M\subset \crc{\Co}$, for  $i\neq j$, we have $\dim \ev{ \bm{c}_i,\bm{c}_j}=2$. From this and Remark~\ref{Remark-independiente}, it follows that $\{\supp{c}\cup \suppa{\bm{c}'}:\bm{c},\bm{c}'\in M,\ \dim \ev{ \bm{c},\bm{c}'}=2\}=\{\suppa{\bm{c}_i}\cup \suppa{\bm{c}_j}:1\leq i<j\leq r\}$. Then, $a=\min \{\{\wev{\bm{c}_i,\bm{c}_j}: 1\leq i<j\leq r\}$. 

On the other hand, let $\{e_1, \ldots, e_r\}$ be the canonical basis of $F_1$.  Then, the differential $\partial_1: F_1 \to F_0$ is given by
$$\partial_1(e_i) = \bm{x}^{\sigma_i} \quad \text{for } i \in [r].$$

Let $m_i=\bm{x}^{\sigma_i}$ and $m_{ij}=\lcm(\bm{x}^{\sigma_i},\bm{x}^{\sigma_j})$. We have $\ker \partial_1=\left\langle S_{ij}:1\leq i<j\leq r \right\rangle $, where $S_{ij}=\frac{m_{ij}}{m_i}e_i-\frac{m_{ij}}{m_j}e_j$. Then, $
\deg(S_{ij})=\deg(m_{ij})=\wev{\bm{c}_i,\bm{c}_j}$. Hence 
\begin{align}
a=\min \{\{\wev{\bm{c}_i,\bm{c}_j}: 1\leq i<j\leq r\}=\min \{\deg(S_{ij}): 1\leq i<j\leq r\}.\label{igua-1}
\end{align}
Let  $A$ be a minimal generating set of $\ker\partial_1$. To prove $\min \{j \mid \beta_{2,j}(R/\ideal{M})\neq 0\}=a$, it suffices to show that there exists an element in $A$ whose degree is equal to $a$.  Let $S_{ij}$ be such that $\deg(S_{ij})=a$. If $S_{ij}\in A$, we have the required result. If this is not the case, 
we can separate the elements in $A$ in those that have the canonical vector $e_i$, the canonical vector $e_j$, and those that do not have any of them. Hence, we can write $A=\{S_{ia_1},\ldots,S_{ia_p},S_{b_1j},\ldots,S_{b_\ell j},S_{i_1j_1},\ldots,S_{i_mj_m}\}$. Since $S_{ij}\in \ker \partial_1= \langle A\rangle$, there exists $f_1,f_2,\ldots,f_p,g_1,g_2,\ldots,g_\ell,h_1,h_2,\ldots,h_m\in R$ such that

\begin{align*}
S_{ij}&=\sum_{u=1}^pf_{u}S_{ia_{u}}+\sum_{u=1}^\ell g_{u}S_{b_{u}j}+\sum_{u=1}^m h_{u}S_{i_{u}j_u}\\
&=\sum_{u=1}^pf_{u}\left(\frac{m_{ia_u}}{m_i}e_i-\frac{m_{ia_u}}{m_{a_u}}e_{a_u}\right)+\sum_{u=1}^\ell g_{u}\left(\frac{m_{b_uj}}{m_{b_u}}e_{b_u}-\frac{m_{b_uj}}{m_j}e_j\right)+\sum_{u=1}^m h_{u}S_{i_{u}j_u}\\
&=\sum_{u=1}^pf_{u}\frac{m_{ia_u}}{m_i}e_i-\sum_{u=1}^\ell g_{u}\frac{m_{b_uj}}{m_j}e_j-\sum_{u=1}^pf_{u}\frac{m_{ia_u}}{m_{a_u}}e_{a_u}+\sum_{u=1}^\ell g_{u}\frac{m_{b_uj}}{m_{b_u}}e_{b_u}+\sum_{u=1}^m h_{u}S_{i_{u}j_u}
\end{align*}
Since $S_{ij}=\frac{m_{ij}}{m_i}e_i-\frac{m_{ij}}{m_j}e_j$, we have $m_{ij}=\sum_{u=1}^pf_{u}m_{ia_u}$. Hence, $m_{ij}\in \langle m_{ia_1},\ldots,m_{ia_{p}}\rangle$. Then, there exists $m_{ia_u}$ such that $m_{ia_u}\mid m_{ij}$. Hence, $\deg(m_{ia_u})\leq \deg (m_{ij})=a$. Also, since $A\subset \{S_{ij}:1\leq i<j\leq r\}$, by \eqref{igua-1} we have $\deg(S_{ia_u})=\deg(m_{ia_u})\geq a$. Hence, $\deg(S_{ia_u})=a$, where $S_{ia_u}\in A$. 
\end{proof}
\end{Proposition}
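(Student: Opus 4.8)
The plan is to handle the three assertions separately, reading parts~\ref{3-b} and~\ref{3-c} off an explicit description of the beginning $F_2\xrightarrow{\partial_2}F_1\xrightarrow{\partial_1}F_0$ of the minimal graded free resolution, and obtaining part~\ref{3-a} by comparison with the Johnsen--Verdure resolution. The observation that drives everything is that, since $M\subset\crc{\Co}$, the supports in $S_M$ are pairwise incomparable --- no minimal support properly contains another --- so the squarefree monomials $\{\bm{x}^{\sigma}:\sigma\in S_M\}$ already form the \emph{minimal} monomial generating set of $\ideal{M}$. Consequently no generator degree is lost to redundancy, and every degree occurring at homological position $1$ is a genuine minimal generator degree.

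For part~\ref{3-a} I would only record the inclusion $\ideal{M}\subseteq\ideal{\crc{\Co}}$, which holds because $S_M\subseteq S_{\crc{\Co}}$, and then invoke Theorem~\ref{prop:betti-verdure}, which yields $\pd(R/\ideal{\crc{\Co}})=k$. This is the sole step that leans on the external matroid/resolution machinery rather than on a direct syzygy computation, so I would treat it as the structural input and not reprove $\pd=k$.

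For part~\ref{3-b}, the identification of the minimal generators gives $F_1=\bigoplus_{\sigma\in S_M}R(-|\sigma|)$; writing each $\sigma=\suppa{\bm{c}}$ with $\bm{c}\in M$ so that $|\sigma|=\w{c}$, the degrees appearing at position $1$ are exactly the weights $\w{c}$, and taking the minimum proves the claim. For part~\ref{3-c}, set $S_M=\{\sigma_1,\dots,\sigma_r\}$, choose $\bm{c}_i\in M$ with $\suppa{\bm{c}_i}=\sigma_i$, and recall that $\ker\partial_1$ is generated by the syzygies $S_{ij}=\tfrac{m_{ij}}{\bm{x}^{\sigma_i}}e_i-\tfrac{m_{ij}}{\bm{x}^{\sigma_j}}e_j$ with $m_{ij}=\lcm(\bm{x}^{\sigma_i},\bm{x}^{\sigma_j})$. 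Because distinct minimal supports force $\dim\ev{\bm{c}_i,\bm{c}_j}=2$ (Remark~\ref{Remark-independiente}), the weight of the span equals the union of supports, so $\deg S_{ij}=|\sigma_i\cup\sigma_j|=\wev{\bm{c}_i,\bm{c}_j}$; the collection of these degrees is precisely $\{\wev{\bm{c},\bm{c}'}:\bm{c},\bm{c}'\in M,\ \dim\ev{\bm{c},\bm{c}'}=2\}$, whose minimum I denote $a$.

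The crux, and the step I expect to be the main obstacle, is that $\{S_{ij}\}$ is almost never a minimal generating set of $N:=\ker\partial_1$, so one cannot read the Betti degrees off these syzygies directly. Rather than rewrite a fixed $S_{ij}$ of degree $a$ in terms of a chosen minimal generating set (the explicit but fussy route taken above), I would argue by graded Nakayama, with $\mathfrak{m}=\langle x_1,\dots,x_n\rangle$ the graded maximal ideal: the module $N$ is generated in degrees $\geq a$, hence $N_j=0$ for $j<a$ while $N_a\neq0$ (it contains any $S_{ij}$ with $\deg S_{ij}=a$), so $(\mathfrak{m}N)_a=\sum_{\ell\geq1}\mathfrak{m}_\ell N_{a-\ell}=0$ and therefore $(N/\mathfrak{m}N)_a=N_a\neq0$. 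Thus $N$ has a minimal generator in degree $a$ and none below it, i.e. $\beta_{2,a}\neq0$ and $\beta_{2,j}=0$ for $j<a$, which is exactly $\min\{j:\beta_{2,j}\neq0\}=a$. The only other point needing care is the identification $\deg S_{ij}=\wev{\bm{c}_i,\bm{c}_j}$, which relies on Remark~\ref{Remark-independiente} to guarantee that the spans are two-dimensional and hence that the weight of the span is the size of the union of the two supports.
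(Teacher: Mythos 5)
Your proposal is correct, and parts~\ref{3-a} and~\ref{3-b}, together with the setup of part~\ref{3-c} (the $\bm{x}^{\sigma_i}$ form the minimal generating set because minimal supports are pairwise incomparable; $\ker\partial_1$ is generated by the Taylor syzygies $S_{ij}$; $\deg S_{ij}=\wev{\bm{c}_i,\bm{c}_j}$ via Remark~\ref{Remark-independiente}), coincide with the paper's argument. Where you genuinely diverge is the final step of part~\ref{3-c}. The paper fixes a minimal generating set $A\subseteq\{S_{ij}\}$ of $N=\ker\partial_1$, supposes the chosen degree-$a$ syzygy $S_{ij}$ is not in $A$, expands it explicitly over $A$, and reads off from the $e_i$-component that $m_{ij}=\sum_u f_u m_{ia_u}$, so some $m_{ia_u}$ divides $m_{ij}$ and hence $\deg S_{ia_u}=a$. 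You instead apply graded Nakayama: since every $S_{ij}$ has degree at least $a$, $N_j=0$ for $j<a$, hence $(\mathfrak{m}N)_a=0$ and $(N/\mathfrak{m}N)_a=N_a\neq0$, which simultaneously gives $\beta_{2,a}\neq0$ and $\beta_{2,j}=0$ for $j<a$. Your route is shorter, sidesteps the (harmless but unremarked) choice of a minimal generating set inside $\{S_{ij}\}$, and makes explicit the vanishing of the Betti numbers below degree $a$, which the paper's computation leaves implicit in \eqref{igua-1}. Both arguments rest on the same two inputs — that the $S_{ij}$ generate $\ker\partial_1$ and that their degrees are exactly the weights of the two-dimensional spans — so the difference is one of technique rather than substance; one small point of care is that the equality of the two degree sets needs both directions (distinct minimal supports give independence, and, by Remark~\ref{Remark-independiente}, independence gives distinct supports), which your write-up uses but states only in one direction.
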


As a consequence of Proposition~\ref{lemma-d1-d2-test}, we obtain our third main result:

\begin{Teorema}\label{ym-free-re}
Let $\Co(n,k)_q$ be a linear code, and let $M\subset \crc{\Co}$ be such that $|S_M|\geq 2$.  Then
\begin{enumerate}
\item $\min \{j \mid \beta_{1,j}(R/\ideal{M})\neq 0\}=d_1(\Co)$ if and only if $M$ contains a codeword of minimum Hamming weight.
\item $\min \{j \mid \beta_{2,j}(R/\ideal{M})\neq 0\}=d_2(\Co)$ if and only if $M$ is a $d_2$-test set.
\end{enumerate}
\end{Teorema}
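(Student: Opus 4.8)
The plan is to obtain both equivalences as immediate consequences of Proposition~\ref{lemma-d1-d2-test}, which has already translated the smallest degrees in which $\beta_{1,j}$ and $\beta_{2,j}$ are nonzero into purely combinatorial data attached to $M$. The only additional ingredient is the elementary observation that, because $M\subset\crc{\Co}\subset\Co$, each of these combinatorial quantities is bounded below by the corresponding generalized Hamming weight; the asserted equalities then reduce to ``achievability'' conditions.

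First I would prove part (1). By item~\ref{3-b} of Proposition~\ref{lemma-d1-d2-test}, we have $\min\{j\mid \beta_{1,j}(R/\ideal{M})\neq 0\}=\min\{\w{c}:\bm{c}\in M\}$. Since every $\bm{c}\in M$ is a nonzero codeword, $\w{c}\geq d_1(\Co)$, so this minimum is always at least $d_1(\Co)$. Consequently it equals $d_1(\Co)$ exactly when some $\bm{c}\in M$ attains $\w{c}=d_1(\Co)$, which is precisely the statement that $M$ contains a codeword of minimum Hamming weight.

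Next I would treat part (2) in the same spirit, invoking item~\ref{3-c} of Proposition~\ref{lemma-d1-d2-test}, namely $\min\{j\mid\beta_{2,j}(R/\ideal{M})\neq 0\}=\min\{\wev{\bm{c},\bm{c}'}:\bm{c},\bm{c}'\in M,\ \dim\ev{\bm{c},\bm{c}'}=2\}$. Here the hypothesis $|S_M|\geq 2$ ensures the indexing set is nonempty, since two codewords with distinct supports are necessarily linearly independent and hence span a two-dimensional subspace. Every such subspace is a two-dimensional subspace of $\Co$, so its weight is at least $d_2(\Co)$, whence the minimum is $\geq d_2(\Co)$, with equality if and only if some pair $\bm{c}_1,\bm{c}_2\in M$ with $\dim\ev{\bm{c}_1,\bm{c}_2}=2$ satisfies $\wev{\bm{c}_1,\bm{c}_2}=d_2(\Co)$. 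By definition this is exactly the condition that $M$ be a $d_2$-test set, completing the argument.

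Since Proposition~\ref{lemma-d1-d2-test} already carries all of the homological content, I do not expect any genuine obstacle: the proof is simply the recognition that $d_1(\Co)$ and $d_2(\Co)$ are lower bounds for the quantities computed there, realized by $M$ exactly under the stated hypotheses. The one point deserving a moment's care is the nonemptiness of the minimization in part (2), which is precisely where the assumption $|S_M|\geq 2$ (together with the fact, recorded in Remark~\ref{Remark-independiente}, that distinct supports force linear independence) enters.
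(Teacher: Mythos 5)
Your argument is correct and coincides with the paper's: the theorem is stated there as an immediate consequence of Proposition~\ref{lemma-d1-d2-test}, with exactly the implicit reasoning you make explicit (the minima from items~\ref{3-b} and~\ref{3-c} are bounded below by $d_1(\Co)$ and $d_2(\Co)$, and equality is precisely the achievability condition in each part). Your added remarks on the nonemptiness of the minimization under $|S_M|\geq 2$ are a sound, if minor, elaboration of what the paper leaves unsaid.
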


From the previous result, Proposition~\ref{prop:d1-Mg} and Theorem~\ref{corollary-main}, we have the following:
\begin{Corollary}
Let $\Co\subset\F{q}{n}$ be a linear code and let $\G$ be the reduced Gr\"obner basis of $\Ic$. The following holds:
\begin{enumerate}
\item $d_1(\Co)=\min \{j \mid \beta_{1,j}(R/\ideal{M_\G})\neq 0\}$.
\item If $|\I \cap \J|\leq \tfrac{|\J|+1}{2}$, then $d_2(\Co)=\min \{j \mid \beta_{2,j}(R/\ideal{M_\G})\neq 0\}$.
\end{enumerate}
\end{Corollary}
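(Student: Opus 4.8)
The plan is to obtain both items as immediate consequences of Theorem~\ref{ym-free-re}, the only substantive work being to verify its hypotheses for the specific set $M=M_\G$ and then to feed in the two earlier structural results, Proposition~\ref{prop:d1-Mg} and Theorem~\ref{corollary-main}. Throughout, Theorem~\ref{ym-free-re} is applied with $R/\ideal{M_\G}$, so I first record that its standing hypotheses are met: the inclusion $M_\G\subset\crc{\Co}$ is immediate from the definition \eqref{def-Mg}, and $|S_{M_\G}|\ge 2$ must be checked (see below). Since $d_2(\Co)$ appears in the statement, $\Co$ is tacitly assumed to have dimension $k\ge 2$.

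For item~(1), I would simply quote Theorem~\ref{ym-free-re}(1), according to which $\min\{j\mid\beta_{1,j}(R/\ideal{M_\G})\ne 0\}=d_1(\Co)$ holds exactly when $M_\G$ contains a codeword of minimum Hamming weight. But this is precisely the assertion of Proposition~\ref{prop:d1-Mg}, which guarantees a codeword $\bm{c}\in M_\G$ with $\w{c}=d_1(\Co)$. Hence the equivalence condition is in force and the equality follows.

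For item~(2), the hypothesis $|\I \cap \J|\le\tfrac{|\J|+1}{2}$ is verbatim the hypothesis of Theorem~\ref{corollary-main}, which concludes that $M_\G$ is a $d_2$-test set. Plugging this into Theorem~\ref{ym-free-re}(2), whose characterizing condition is exactly that $M_\G$ be a $d_2$-test set, yields $d_2(\Co)=\min\{j\mid\beta_{2,j}(R/\ideal{M_\G})\ne 0\}$. Note that in this case the auxiliary condition $|S_{M_\G}|\ge 2$ comes for free: being a $d_2$-test set, $M_\G$ contains $\bm{c}_1,\bm{c}_2$ with $\dim\ev{\bm{c}_1,\bm{c}_2}=2$, and by Remark~\ref{Remark-independiente} these have distinct supports, so $|S_{M_\G}|\ge 2$.

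The one genuinely delicate point, and the step I expect to be the main obstacle, is verifying $|S_{M_\G}|\ge 2$ for item~(1), where the hypothesis of Theorem~\ref{corollary-main} is not assumed and so one cannot invoke the $d_2$-test-set argument above. Here the cardinality must instead be extracted from the dimension hypothesis $k\ge 2$: if $|S_{M_\G}|=1$ then, by Remark~\ref{Remark-independiente}, every pair of codewords in $M_\G$ sharing that single support would fail to span a plane, forcing all elements of $M_\G$ to be scalar multiples of one minimal-support codeword, which one should rule out using the richness of the reduced Gr\"obner basis $\G$ of $\Ic$ for a code of dimension at least two. Once this is settled, item~(1) closes with the citation to Proposition~\ref{prop:d1-Mg} as described.
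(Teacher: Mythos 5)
Your proposal is correct and is essentially the paper's own argument: the Corollary is stated there as an immediate combination of Theorem~\ref{ym-free-re} with Proposition~\ref{prop:d1-Mg} (for item~(1)) and Theorem~\ref{corollary-main} (for item~(2)), with no further proof given. The one point where you go beyond the paper --- worrying about $|S_{M_\G}|\geq 2$ in item~(1) --- is not the obstacle you fear: the hypothesis $|S_M|\geq 2$ in Proposition~\ref{lemma-d1-d2-test} is only used to force $\rho\geq 2$, i.e.\ it is needed for the second syzygies in part~\ref{3-c}, while part~\ref{3-b} (the degrees of the minimal generators of $\ideal{M}$) holds for any nonempty $M\subset\crc{\Co}$, and $M_\G\neq\emptyset$ is exactly what Proposition~\ref{prop:d1-Mg} supplies. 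So item~(1) closes without having to produce a second support from ``the richness of the Gr\"obner basis,'' and your treatment of item~(2), where the $d_2$-test-set property together with Remark~\ref{Remark-independiente} yields $|S_{M_\G}|\geq 2$ for free, is exactly right.
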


In this study, we extend the findings of  \citet{garcia2022free} by introducing the notion of a $d_2$-test set for a linear code $\Co\subset\F{q}{n}$, which constitutes a more refined structure from which $d_2(\Co)$ can be determined. While in the binary case the set $M_\G$ is a $d_2$-test set, our results show that, if $\Co\subset\F{q}{n}$, this is not generally true when $q>2$. Consequently,  the results presented in Theorems \ref{main-result} and \ref{ym-free-re} highlight that the information contained in the minimal free resolution of $R/I_{M_\G}$ is, in general, insufficient to recover $d_2(\Co)$ as in the binary case.

\subsection*{Acknowledgments}  
We would like to thank Daniel Duarte (Centro de Ciencias Matem\'aticas, Universidad Nacional Aut\'onoma de M\'exico, M\'exico) for his valuable guidance, insightful comments and suggestions throughout this work.

\bibliography{demo}
\end{document}